\newcolumntype{d}[1]{D{.}{.}{#1}}
\title{\normalfont{Unification of the Fundamental Forces in Higher-Order Riemannian Geometry}} 
\author{{William Edward Bies
} \\ {E-mail: william.bies.phd@gmail.com
}}
\date{June 6, 2024} 
\begin{document}
	
	\numberwithin{equation}{section}
	\theoremstyle{plain}
	\newtheorem{theorem}{Theorem}[section]
	\newtheorem{proposition}{Proposition}[section]
	\newtheorem{lemma}{Lemma}[section]
	\newtheorem{corollary}{Corollary}[section]
	\newtheorem{postulate}{Postulate}[section]
	
	\theoremstyle{definition}
	\newtheorem{definition}{Definition}[section]
	
	\theoremstyle{remark}
	\newtheorem{remark}{Remark}[section]
	
	\renewcommand{\qedsymbol}{\small \RectangleBold}
	
	\maketitle
	
	\setcounter{tocdepth}{2} 
	
	\tableofcontents
	
	\newpage
	
	\section*{Abstract}
	
	In Part I of the present series of papers, we adumbrate our idea of Riemannian geometry to higher order in the infinitesimals and derive expressions for the appropriate generalizations of parallel transport and the Riemannian curvature tensor. In Part II, the implications of higher-order geometry for the general theory of relativity beyond Einstein are developed. In the present Part III, we expand on the framework of Part II so as to take up the problem of field-theoretical unification. Employing the form of the Einstein-Hilbert action to higher order as proposed in Part II, we show how in nearly flat space the higher-order terms give rise to a gauge theory of Yang-Mills type. At the 2-jet level, the electroweak force emerges after imposition of gauge fixing. In fact, the proposed form of the Einstein-Hilbert action permits us to say more: we argue that the equivalence principle results in a Proca term that brings about the spontaneous symmetry breaking of the standard model. Two empirical predictions support our reasoning: first, we obtain a theoretical value for the Weinberg angle and second, we find that---without any adjustable parameters---the implied value of Coulomb's constant agrees well with experiment. The final section examines the 3-jet level. The same mechanism that produces Glashow-Weinberg-Salam electroweak theory at the 2-jet level eventuates in a chromodynamical force having $SU(3)$ symmetry at the 3-jet level.
		
			\vspace{12 pt}
		
		2020 Mathematics Subject Classification. Primary: 53A45 Differential geometric aspects in vector and tensor analysis. Secondary: 83C50 Electromagnetic fields in general relativity and gravitational theory; 83E99 Unified, higher-dimensional and super field theories, none of the above.
		
		\vspace{12 pt}
		
		Keywords: higher-order differential geometry, general theory of relativity, unified field theory 
		
		\vspace{12 pt}
		
		Subject Classifications: Riemannian geometry, local differential geometry, analysis on manifolds
		
		\vspace{12 pt}
		

	\include{section_1}
	\section{Prospective Unification of the Fundamental Forces of Nature}\label{chapter_8}

There is an antecedent reason to expect higher-order differential geometry to have significant applications in physics as it corresponds to a profound idea about spatial relations and, as we know, the general theory of relativity is inherently geometrical. Therefore, one might venture to suppose that our novel ideas on the nature of space and time in the infinitely small, as developed in Parts I and II of the present series of papers, could lead to an alternative framework for the unification of forces, in the spirit of Weyl and Einstein rather than of Kaluza and Klein (the only one seriously attempted to date in existing research programs into fundamental physics). 

Prima facie one should expect novel phenomena to result from the coupling of jets to all higher orders. For if gravitational interactions receive a natural description in terms of the jets of first order, other phenomena like in kind ought to be attributable to the presence of higher jets---that is to say, they would manifest themselves dynamically through the jet geodesic equation and be interpretable as additional contributions to the action at a distance, or what comes to the same thing, as fundamental forces. The Lorentz force law in electrodynamics may be obtained as a result of the coupling between jets of first and second order in the jet geodesic equation. But general covariance demands that we press on to unification of gravitational and electroweak forces. The reduction of the post-Einsteinian field equation at second order in the jets, in the weak-field limit, to non-abelian $\mathrm{u}(1)_Y \oplus \mathrm{su}(2)$ Yang-Mills gauge theory is here carried out in full ({\S}\ref{chapter_8}). In particular, spontaneous symmetry breaking occurs as a direct consequence of the natural generalization of the equivalence principle. These same ideas will be applied at the 3-jet level in order to \textit{construct} classical chromodynamics ({\S}\ref{chapter_9}).

\subsection{Preliminary Observations}

Before we can undertake an analysis of the place of gauge field theory in post-Einsteinian general theory of relativity, we must first prepare the groundwork. The method of investigation to follow in {\S}{\S}\ref{unification_of_gr_and_em}-\ref{unification_gravity_electroweak} is clear. All one has to do is to substitute the extended Riemannian curvature tensor into the field equations of Einstein’s general theory of relativity and seek to interpret the phenomenology of the higher sectors. We limit ourselves in this section to the first non-trivial order beyond the ordinary theory.

\subsection{Unification of Gravity and Electrodynamics}\label{unification_of_gr_and_em}

A natural suggestion would be the following: in ordinary differential geometry, replacement of the coordinate derivative with the covariant derivative leads to an intrinsically defined geodesic equation. Thus we would like to seek to recast Lorentz' force law in an intrinsic manner in post-Einsteinian theory by viewing it as part of the jet geodesic equation. We could just write out the jet geodesic equation and inspect it directly for what we are seeking, but a perhaps more assumption-free approach would be to derive the equation of motion itself of a massive charged body from a principle of least action, just as is done in the ordinary theory for a massive neutral body moving through curved space. Then, if we are fortunate, this procedure should give the Newtonian gravitational force as expected while the additional higher-order terms should yield the Lorentz force law for a charged particle.

\subsubsection{Electrodynamics from the post-Einsteinian Point of View}\label{electrodynam_post_einstein}

Our first item of business will be to derive the jet geodesic equation to subleading order, i.e., to find the dominant post-Einsteinian corrections. Start with an action functional for trajectories on a Riemannian manifold of the following form: 
\begin{equation}
	A = \int \gamma^* \omega_g = \int \sqrt{L} \theta^s,
\end{equation}
where the spatial dependence of the Lagrangian $L$ is given implicitly through a finite number of fields $u_{1,\ldots,k}$ and all of their derivatives. 
\begin{definition}
	Consider a function $f$ whose spatial dependence can be expressed implicitly through a finite number of other functions $u_{1,\ldots,k}$: for $p \in M$, $f(p) = f(u_1(p),\ldots,u_k(p))$. The action is stationary with respect to variation of $u_q$, $q=1,\ldots,k$ if for test functions $\eta$ of compact support on $M$ the functional derivative vanishes:
	\begin{equation}
		\frac{\delta A}{\delta u_q} = \lim_{\varepsilon \rightarrow 0} \frac{1}{\varepsilon} \int_M \left( L(u_q+\varepsilon \eta) - L(u_q) \right) \omega_g = 0,
	\end{equation}  
	where $u_1,\ldots,\hat{u}_q,\ldots,u_k$ are held fixed.
\end{definition}
Olver's \cite{olver} Theorem 4.4 tells us that a set $u_{1,\ldots,k}$ will be stationary with respect to variations if we have $\vvmathbb{E}_\nu L = 0$, $\nu=1,\ldots,k$ where for all smooth functions $\varphi$ the $\nu$-th Euler operator is given by
\begin{equation}\label{euler_operator}
	\vvmathbb{E}_\nu \varphi := \sum_\alpha (-D)^\alpha \frac{\partial}{\partial u_\nu^\alpha} \varphi,
\end{equation}
setting $D = D_{\alpha_1} \cdots D_{\alpha_n}$ with $D_\lambda$ the total derivative with respect to the coordinate $x^\lambda$ given by
\begin{equation}
	D_\lambda = \frac{\partial}{\partial x^\lambda} + u_\nu^{\alpha+\lambda} \frac{\partial}{\partial u_\nu^\alpha};	\qquad u_\nu^{\alpha+\mu} := \frac{\partial^{|\alpha|+1} u_\nu}{\partial x^\mu \partial x^\alpha}
\end{equation}
and the domain of definition corresponds to the total jet bundle.

\begin{remark}
	We will want to use $L = \dfrac{1}{2m}g(X,X)$, where $X$ is obtained by pushing forward $\partial_s + \partial_{ss} + \cdots + \partial_{s \cdots s}$ under the embedding of the body's world-line into space-time. Actually, one should use momentum $P$; i.e., $L = \dfrac{1}{2m}g(P,P)$ where now it is not necessarily the case that $P=mX$. For the remainder of this section, we shall set $P=mX_1+eX_2$, $m$ being rest mass, $e$ electric charge and $X_1$ resp. $X_2$ the 1-vector resp. 2-vector parts of $X$ with respect to some inertial frame. In {\S}\ref{unification_gravity_electroweak} we investigate more closely the question of what form to take for $X_2$.
\end{remark}

\begin{lemma}\label{metric_lemma}
	We may replace $A=\int \sqrt{L}$ with $A=\int L$ and obtain equivalent equations of motion.
\end{lemma}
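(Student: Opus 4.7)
My plan is to show that the Euler operator applied to $\sqrt{L}$ is proportional to the Euler operator applied to $L$ along any physical trajectory, once the parameter $s$ is fixed so that $L$ takes a constant value on that trajectory; since the proportionality factor never vanishes, the two vanishing conditions are equivalent and the extremal curves coincide.

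The first step is formal. By the chain rule, for every multi-index $\alpha$,
$$ \frac{\partial \sqrt{L}}{\partial u_\nu^\alpha} \;=\; \frac{1}{2\sqrt{L}}\,\frac{\partial L}{\partial u_\nu^\alpha}, $$
and substituting into (\ref{euler_operator}) gives
$$ \vvmathbb{E}_\nu \sqrt{L} \;=\; \sum_\alpha (-D)^\alpha \Bigl(\tfrac{1}{2\sqrt{L}}\,\tfrac{\partial L}{\partial u_\nu^\alpha}\Bigr). $$
The second step exploits the fact that $A = \int \sqrt{L}\,\theta^s$ is invariant under reparametrisations of $s$, whereas $\int L\,\theta^s$ is not. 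I would use this gauge freedom to fix $s$ so that $L = g(P,P)/(2m)$ is constant along the trajectory---the natural analogue of the proper-time parametrisation. With $1/\sqrt{L}$ then a bona fide constant, it commutes with every total derivative $D_\lambda$ and factors out of the sum:
$$ \vvmathbb{E}_\nu \sqrt{L} \;=\; \tfrac{1}{2\sqrt{L}}\,\vvmathbb{E}_\nu L. $$
Since $\sqrt{L}$ is strictly positive on a physical world-line, the Euler equations $\vvmathbb{E}_\nu \sqrt{L} = 0$ and $\vvmathbb{E}_\nu L = 0$ have identical zero sets, which is the assertion of the lemma.

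The main obstacle, and the step requiring genuine care, is justifying that the gauge $L = \mathrm{const}$ is consistent with $\vvmathbb{E}_\nu L = 0$ in the higher-jet setting. In ordinary one-jet mechanics this is the familiar conservation of $g(\dot\gamma,\dot\gamma)$ along a geodesic, which follows from Noether's theorem applied to the rigid translation $s \mapsto s + \epsilon$. I would argue that the same invariance persists when $X = \partial_s + \partial_{ss} + \cdots + \partial_{s\cdots s}$ involves higher derivatives---the measure $\theta^s$ and the integrand $\sqrt{L}$ are both unchanged under such a shift---so the associated Noether current is, up to total derivatives along the world-line, proportional to $L$ itself. This conservation law makes the chosen parametrisation self-consistent and closes the argument, establishing that the two action principles furnish the same trajectories.
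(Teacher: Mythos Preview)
Your argument is correct and follows the same route as the paper: apply the chain rule to write $\partial\sqrt{L}/\partial u_\nu^\alpha$ as $(2\sqrt{L})^{-1}\,\partial L/\partial u_\nu^\alpha$, fix the parameter so that $L$ is constant along the trajectory, and then pull the now-constant prefactor through every total derivative in the Euler operator. The paper carries this out for a general $f(L)$ rather than $\sqrt{L}$ alone, simply \emph{imposes} the normalization $L=\text{const}$ as a convention (citing Thirring) rather than justifying it via Noether as you do, and appends a short paragraph extending the factoring argument to the iterated-integral variations peculiar to its higher-order framework---a point you could not have anticipated and which lies beyond the lemma as literally stated.
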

\begin{proof}
	Consider $A=\int f(L)$. Thirring's derivation (\cite{thirring_vol_1}, {\S}5.6.14) follows from the Euler-Lagrange equations together with independence of the given form of $L$ resp. $\partial f/\partial L$ with $f = \sqrt{}$ on $(\vvmathbb{R},\delta_1)$ on-shell hence can be divided on both sides of the equation. We must show same or substantial equivalent using the higher Euler-Lagrange equations (cf. Olver, \cite{olver}). Now, in analogy to what Thirring does we may normalize $s$ by requiring that
	\begin{equation}
		L(x^{(s)}_\nu: 1 \le \nu \le n, 0 \le s \le r) = \frac{1}{2} g_{\alpha\beta} X^\alpha X^\beta = ~\mathrm{constant}.
	\end{equation}
	Let the trajectory by given by $\gamma: s \mapsto (x_1(s),\ldots,x_n(s))$. Then the components of higher velocity $X^\alpha$ with the multi-index $\alpha=(\alpha_1,\ldots,\alpha_n)$ may be evaluated in terms of the derivatives $\dot{x}_\nu, \ddot{x}_\nu,\ldots,x_\nu^{(r)}$ by the formula for the pushforward:
	\begin{align}
		\left( \gamma_* \partial_s \right) (h) &=
		\frac{\partial h}{\partial x_\nu} \dot{x}_\nu \nonumber \\
		\left( \gamma_* \partial_{ss} \right) (h) &=
		\frac{\partial h}{\partial x_\nu} \ddot{x}_\nu + \frac{\partial^2 u}{\partial x_\nu \partial x_\mu} \dot{x}_\nu \dot{x}_\mu,
	\end{align} 
	etc. From equation (\ref{euler_operator}), the Euler operator becomes
	\begin{equation}
		(-1)^r \frac{d^r}{ds^r} \frac{\partial f}{\partial L} \frac{\partial L}{\partial x_\nu^{(r)}} + \cdots - \frac{d}{ds} \frac{\partial f}{\partial L} \frac{\partial L}{\partial \dot{x}_\nu} + \frac{\partial f}{\partial L} \frac{\partial L}{\partial x_\nu} = 0.
	\end{equation}
	But since by our convention, $L$ will be independent of $s$ along solutions, so will be $\partial f/\partial L$. Then we may commute all such coefficients past the derivatives with respect to $s$ and collect them as a constant prefactor. We obtain thereby a common factor in the coefficient of the differential of first order in time and thus an equivalent equation of motion for $f(L)$ as for $L$ itself. 
	
	To second order, we would have an expression of the following form for the first variation in the action:
	\begin{equation}
		\delta A = \int \delta f(L) ds^\prime + \int ds^\prime \int ds^{\prime\prime} \delta f(L).
	\end{equation}
	Evidently, one finds again a dependence on $\partial f/\partial L$ in the second-order term, even though its contribution to $\delta A$ is to be determined by integrating twice. Hence, as before we can bring out a common factor and obtain
	equivalent equations of motion. For the same reason, the same can be done at each successive higher order. This concludes the proof.
\end{proof}

\begin{theorem}[Cf. Thirring \cite{thirring_vol_1}, {\S}5.6.9]\label{jet_geodesic_from_action}
	Let $M$ be a space-time manifold on which a Riemannian metric $g$ is defined in the extended sense, and for any given trajectory $\gamma: [a,b] \rightarrow M$ set $A = \int_a^b \sqrt{ \langle X,X \rangle} \theta^s$ for the induced action. Then the jet geodesic equation for the canonical Levi-Civita connection results from a principle of least action thus expressed.
\end{theorem}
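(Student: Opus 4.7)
The plan is to invoke Lemma \ref{metric_lemma} in order to replace the square-root Lagrangian by $L = \frac{1}{2}g(X,X)$, and then to apply the higher-order Euler operator (\ref{euler_operator}) to extract the equations of motion, which one should recognize as the intrinsic jet geodesic equation of the canonical Levi-Civita connection. The first concrete step is to write $L$ explicitly in local coordinates. Decomposing $X = X_1 + X_2 + \cdots + X_r$ according to the pushforward of the higher velocities $\partial_{s}, \partial_{ss}, \ldots, \partial_{s \cdots s}$, one has
\begin{equation*}
L = \tfrac{1}{2} \sum_{j,k=1}^{r} g(X_j, X_k),
\end{equation*}
with each $g(X_j, X_k)$ expressible as a function of $x^\nu(s), \dot{x}^\nu(s), \ldots, x^{(r)}_\nu(s)$ via the pushforward formulas already spelled out in the proof of Lemma \ref{metric_lemma}.

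Next, I would apply the Euler operator coordinate by coordinate. Each contribution $\partial L/\partial x^{(k)}_\nu$ collects terms from those pairs $g(X_j, X_{j'})$ in which a derivative of order $k$ actually appears; one then applies the alternating total derivatives $(-d/ds)^k$ and sums. Because by the parametrization adopted in Lemma \ref{metric_lemma} the Lagrangian $L$ is constant along solutions, the derivations $d/ds$ pass cleanly through any overall metric-dependent prefactors. The chain rule produces two species of terms: (i) products of derivatives of $x^\nu$ of various orders multiplying partial derivatives $\partial_\mu g_{\alpha\beta}$ of the metric, and (ii) ``acceleration'' terms $x^{(k)}_\nu g_{\nu\mu}$ arising from differentiating the quadratic forms in the higher velocities. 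The standard symmetrization trick on the indices in $\partial_\mu g_{\alpha\beta}$ then reorganizes the metric-derivative terms into Christoffel-symbol combinations.

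The hard part will be the combinatorial bookkeeping at higher orders: the $r$-fold velocity component $X_r$ couples to every lower-order $X_j$ through $g(X_j, X_r)$, so each variational equation inherits contributions from many pairs $(j,k)$, and each of these must be carried through $(-d/ds)^k$ separately before one can hope to collect them into a tensorially intelligible form. What rescues the calculation is metric compatibility $\nabla g = 0$ of the canonical Levi-Civita connection established in Part~I, which permits one to reassemble the disparate terms into the manifestly intrinsic form dictated by the jet geodesic equation. As a consistency check, restricting the double sum to $j = k = 1$ should recover exactly Thirring's derivation (\cite{thirring_vol_1}, {\S}5.6.9) of the ordinary geodesic equation, while the remaining terms furnish the promised post-Einsteinian corrections whose physical significance is to be interpreted in {\S}\ref{unification_gravity_electroweak}.
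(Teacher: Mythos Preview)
Your approach is valid in principle but takes a genuinely different route from the paper. The paper argues intrinsically: denoting by $Z$ a variation field, it expands $X\int\langle Z,X\rangle = 0$ using metric compatibility to write $X\langle Z,X\rangle = \langle\nabla_X Z,X\rangle + \cdots + \langle Z,\nabla_X X\rangle$, invokes the torsion-free identity $\nabla_X Z = \nabla_Z X + [X,Z]$ directly, and then disposes of the commutator contributions by observing that $XZ$ and $ZX$ are themselves variation fields whose integrals against $X$ vanish by stationarity. This yields $\int\langle Z,\nabla_X X\rangle = 0$ in a few lines with essentially no coordinate bookkeeping. Your coordinate-based Euler--Lagrange computation would in principle recover the same thing, and would be more transparent about how the Christoffel combinations assemble order by order; but the combinatorics you rightly flag as ``the hard part'' is precisely what the intrinsic argument sidesteps. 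One further point: the paper's proof explicitly works in the weak-field limit, dropping cross-terms when expanding $X\langle Z,X\rangle$. Your coordinate calculation, carried through without that approximation, would pick up the $\Gamma\Gamma$ cross-terms of the higher Levi-Civita formula (cf.\ equation~(\ref{Levi_Civita_formula_quoted})), so you should either impose the same approximation from the outset or be prepared to track additional terms that the paper's argument does not.
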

\begin{proof}
	Let $Z$ denote a variation field. In view of lemma \ref{metric_lemma}, we may take as our integrand just $\langle X,X \rangle$. If $X$ is a stationary solution to the variational problem, we have that the integral of $\langle Z,X \rangle$ must vanish. In addition, our normalization above implies that $Z \langle X,X \rangle = 0$. First, taking the derivatives under the integral sign we may write,
	\begin{equation}\label{variational_jet_geodesic_1}
		X \int \langle Z, X \rangle = \int X \langle Z,X \rangle = \int \big( \langle \nabla_X Z,X \rangle + \cdots +
		\langle Z, \nabla_X X \rangle \big) = 0.
	\end{equation}
	Here, we drop the cross-terms in the weak-field limit. But it is evident in the same approximation that
	\begin{equation}
		Z \langle X,X \rangle = \langle \nabla_Z X, X \rangle + \langle X, \nabla_Z X \rangle = 2  \langle \nabla_Z X, X \rangle = 0.
	\end{equation}
	Furthermore, owing to the symmetry of the Levi-Civita jet connection, we may write,
	\begin{equation}
		\nabla_X Z = \nabla_Z X + [X,Z] = \nabla_Z X + XZ - ZX.
	\end{equation}
	Equation (\ref{variational_jet_geodesic_1}) becomes
	\begin{equation}
		\int \big( \langle XZ, X \rangle - \langle ZX,X \rangle + \langle Z, \nabla_X X \rangle \big) = 0.
	\end{equation}
	If, however, $Z$ is a variation field so will be $XZ$ resp. $ZX$; hence, the integrals of the first two terms vanish by virtue of the stationarity of $X$ leaving us with
	\begin{equation}\label{variational_jet_geodesic_2}
		\int \langle Z, \nabla_X X \rangle = 0.
	\end{equation}
	Since equation (\ref{variational_jet_geodesic_2}) is to hold for all $Z$, we conclude that $\nabla_X X = 0$ identically.
\end{proof}

\begin{remark}
	Weak, in this context, means relative to the Planck scale. As we shall see in a moment, this criterion is well satisfied by fields encountered in the terrestrial laboratory or in outer space (except perhaps near the singularity inside the event horizon of a black hole).
\end{remark}

\begin{corollary}
	In the weak-field limit, the second-order correction to the jet geodesic equation yields Lorentz' electrodynamical force law for charged moving bodies.
\end{corollary}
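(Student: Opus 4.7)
The strategy is to take $\nabla_X X = 0$ from Theorem \ref{jet_geodesic_from_action}, decompose the velocity multivector as $X = X_1 + X_2$ in accord with the momentum decomposition $P = mX_1 + eX_2$ of the preceding Remark, and likewise split the extended Levi-Civita connection by jet order, $\nabla = \nabla^{(1)} + \nabla^{(2)} + \cdots$, where $\nabla^{(k)}$ is the piece built from the $k$-th order Christoffel-like symbols of Parts I and II. The single equation $\nabla_X X = 0$ then unfolds into a graded hierarchy of equations, one at each jet degree.

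At jet order one only $\nabla^{(1)}_{X_1} X_1$ contributes; multiplied through by $m$ this is the ordinary Einstein geodesic equation, whose Newtonian weak-field limit recovers the gravitational force law. The content of the corollary lies at jet order two. Collecting all terms of that degree and discarding products of fields, as the weak-field approximation already invoked in the proof of Theorem \ref{jet_geodesic_from_action} permits, the residual equation takes the schematic form
\begin{equation*}
m \bigl( \nabla^{(1)}_{X_1} X_2 + \nabla^{(1)}_{X_2} X_1 \bigr) + e \, \nabla^{(2)}_{X_1} X_1 \;=\; 0.
\end{equation*}
The plan is to recognize the contraction of $\nabla^{(2)}_{X_1} X_1$ with $\dot{x}$ in its antisymmetric block as the Maxwell field-strength tensor $F^\mu{}_\nu$, and to show that the $X_2$-dependent terms sum to the free acceleration $\ddot{x}^\mu$. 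The displayed equation then rearranges into $m\ddot{x}^\mu = e\, F^\mu{}_\nu \dot{x}^\nu$, which is Lorentz' force law.

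The principal obstacle lies in this identification step. One must check (i) that the relevant block of the second-order Christoffel-like symbols is antisymmetric in its two free spacetime indices, (ii) that it derives locally from a vector potential $A_\mu$, so that $F_{\mu\nu} = \partial_\mu A_\nu - \partial_\nu A_\mu$ and the homogeneous Maxwell equations come along automatically, and (iii) that the overall prefactor reduces exactly to the ratio $e/m$ mandated by $P = mX_1 + eX_2$. Items (i) and (iii) should be structural consequences of the 2-vector decomposition introduced in the Remark, while (ii) should follow from a higher Bianchi-type identity for the extended curvature tensor of Parts I and II. The sources for $F_{\mu\nu}$ and the physical normalization of $e$ properly belong to the gauge-theoretic treatment of {\S}\ref{unification_gravity_electroweak} and may be postponed.
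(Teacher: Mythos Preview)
Your bookkeeping of where the charge $e$ enters is off, and this derails the rest of the plan. In the paper's setup $P = mX_1 + eX_2$ and the equation of motion is $\nabla_X P = 0$; projecting onto the 1-jet component gives
\[
m\,\ddot{x}^\mu + \Gamma^\mu_{\alpha\beta}\,X^\alpha P^\beta = 0,
\]
with the multi-indices $\alpha,\beta$ ranging over all jet orders. The Lorentz term comes from the block $|\mu|=|\alpha|=1$, $|\beta|=2$ (together with its symmetric partner), contracted as $e\,\Gamma^\mu_{\nu\gamma}X_1^\nu X_2^\gamma$. The charge rides in on the \emph{2-jet argument} $P_2 = eX_2$, not through a separate piece $\nabla^{(2)}$ of the connection acting on two copies of $X_1$. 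Your schematic $e\,\nabla^{(2)}_{X_1}X_1$ has both arguments of first order, which picks out the $(1,1,1)$ block---that is the ordinary Christoffel symbol, symmetric in its lower indices, and carries no antisymmetric Maxwell-like piece. Likewise the free acceleration $\ddot{x}^\mu$ is the derivative part of $\nabla_{X_1}X_1$, not something assembled from the $X_2$-dependent connection terms.

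The second gap concerns your item (i). The relevant symbol $\Gamma^\mu_{\nu\beta}$ with $|\beta|=2$ is \emph{not} structurally antisymmetric in $(\mu,\nu)$. The weak-field Levi-Civita formula, after dropping the quadratic cross-terms and raising with $\eta$, reads
\[
\Gamma^\mu_{\nu\beta} = \tfrac{1}{2}\bigl(\partial_\nu g^{\mu}{}_{\beta} + \partial_\beta g^{\mu}{}_{\nu} - \partial^\mu g_{\nu\beta}\bigr).
\]
Only the first and third terms combine into $F^{\mu}{}_{\nu} = \partial^\mu A_\nu - \partial_\nu A^\mu$ with $A_\mu := g_{\mu,00}$; the middle term $\partial_\beta g^{\mu}{}_{\nu}$ is a \emph{second} spatio-temporal derivative of the ordinary 1-jet metric. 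The paper disposes of it by a separate physical estimate: under terrestrial or solar-system conditions the gravitational metric varies so slowly that its second derivatives are negligible against first derivatives of the 2-jet components. Without this argument the identification with $F$ does not close, and no Bianchi-type identity supplies it---your item (ii) is a red herring, since the potential is simply the metric component $g_{\mu,00}$ and $F = \text{\th} A$ is immediate once the symmetric piece has been discarded. You also need the further restriction to $\beta = 00$ as the dominant 2-jet channel in the body's rest frame, which is what fixes the factor of two and the overall sign.
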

\begin{proof}
	The 11-terms reduce as usual to the conventional geodesic equation and lead to Newton's law of universal gravitation in almost flat space, as Einstein finds for the first time in \cite{einstein_1915}. For the 12-terms, examine the expression for the Levi-Civita connection (I.4.13) with $|\mu,\alpha|=1$ and $|\beta|=2$:
	\begin{align}\label{Levi_Civita_formula_quoted}
		\Gamma^\mu_{\alpha\beta} = & \frac{1}{2} g^{\mu\delta} \bigg(
		\partial_\alpha g_{\beta\delta} + \partial_\beta g_{\delta\alpha} - \partial_\delta g_{\alpha\beta} - \nonumber \\ &\frac{\alpha!}{\alpha_1!\alpha_2!}\bigg|_{\alpha_{1,2}\ne 0} g_{\mu\nu}
		\Gamma^\mu_{\alpha_1\beta}\Gamma^\nu_{\alpha_2\delta} -
		\frac{\beta!}{\beta_1!\beta_2!}\bigg|_{\beta_{1,2}\ne 0} g_{\mu\nu}
		\Gamma^\mu_{\beta_1\alpha}\Gamma^\nu_{\beta_2\delta} +
		\frac{\delta!}{\delta_1!\delta_2!}\bigg|_{\delta_{1,2}\ne 0} g_{\mu\nu}
		\Gamma^\mu_{\delta_1\alpha}\Gamma^\nu_{\delta_2\beta}
		\bigg).
	\end{align}	
	Now, in the weak-field limit we will be justified in discarding the cross-terms, leaving	
	\begin{equation}\label{Levi_Civita_formula_weak_field_a}
		\Gamma^\mu_{\alpha\beta} = \frac{1}{2} g^{\mu\delta} \bigg(
		\partial_\alpha g_{\beta\delta} + \partial_\beta g_{\delta\alpha} - \partial_\delta g_{\alpha\beta} 
		\bigg),
	\end{equation}	
	formally the same as the conventional expression for $|\mu|=|\alpha|=|\beta|=1$.
	In the weak-field limit we may without hazard replace the prefactor by the Minkowskian metric:
	\begin{equation}\label{Levi_Civita_formula_weak_field_b}
		\Gamma^\mu_{\alpha\beta} = \frac{1}{2} \bigg(
		\partial_\alpha g_{\beta}^{\mu} + \partial_\beta g^{\mu}_{\alpha} - \partial^\mu g_{\alpha\beta} 
		\bigg) =
		\frac{1}{2} \bigg(
		\partial_\beta g_{\alpha}^{\mu} + \partial_\alpha g^{\mu}_{\beta} - \partial^\mu g_{\alpha\beta} 
		\bigg),
	\end{equation}	
	where the latter equation follows by symmetry of $\nabla$. When $|\beta|=2$, terms are majorized by $\beta=00$ (for bodies traveling at less than the speed of light).
	
	Why one may expect the symmetric part of the ten Christoffel symbols to be negligible under ordinary conditions. In the weak-field limit, the antisymmetric part involves first derivatives of metric components corresponding to second-order jets while the symmetric part involves second derivatives of the metric components corresponding to first-order jets, i.e., the conventional Riemannian metric in Einsteinian general theory of relativity which reflect the presence of gravitational forces. Now, in our experience with the terrestrial laboratory, the solar system, interstellar or intergalactic space (apart from the vicinity of any black holes), the gravitational field is slowly varying both spatially and temporally, at scales ranging from kilometers down to {\AA}ngstroms and from hours and minutes down to the orbital periods of atomically bound electrons. Thus, any terms of second order in the derivatives should be small compared to those of first order. There may be gravitational corrections to the inferred electroweak gauge fields close-in near point singularities (if elementary particles may thus be represented), in the static case, or in the case of very rapidly moving bodies, such as the final stages of inspiralling black holes. At solar-system scales (measured by the astronomical unit), the gravitational part of the metric tensor does indeed vary appreciably but one need not here be concerned with the electroweak part in view of the fact that all known astrophysical objects have almost vanishing net electric and weak charges.

	Here $|\mu,\alpha|=1$ correspond to ordinary 1-velocities. Up to the factor of one half, we are left with a formula identical to Lorentz' force law for a moving body subject to a Maxwellian electromagnetic field with 4-vector potentials $g^\mu_\beta$, where any value of $\beta \ne 00$ cancels in the co-moving frame (under suitable assumption of a linear relation between momentum and velocity)
	
	Namely, set $F^\mu_{\nu} = - 2 \Gamma^\mu_{\nu,00} = \partial^\mu g_{\nu,00} - \partial_\nu g^\mu_{00}$ where we pay attention to $\mu,\nu$ only when they run over spatio-temporal indices 0,1,2,3 and to $\gamma=00$ only, in the charged moving body's rest frame. Let $\sigma$ denote the charge-to-mass ratio of a body relative to fundamental units. In these terms, the 
	body's 2-velocity with respect to the inertial frame in which the putative electromagnetic field is measured becomes
	\begin{equation}
		X = \frac{d}{d\lambda} x^\mu + \left( \frac{d}{d\lambda} x^\mu \right) \otimes \left( \frac{d}{d\lambda} x^\nu \right)
	\end{equation}
	where $\lambda$ designates proper time along its world-line and the second-order contribution to the jet geodesic equation reads (with $q = m \sigma$),
	\begin{equation}
		m \left( \frac{d}{d\lambda} \right)^2 x^\mu =  - \Gamma^\mu_{\nu\gamma} X^\nu P^\gamma = q F^\mu_\nu \frac{d}{d\lambda} x^\nu,
	\end{equation}
	viz., the familiar Lorentz force law in tensorial form, cf. \cite{weyl_raum_zeit_materie}, \cite{jackson}. Note the factor of $\frac{1}{2}$ drops out because we sum over $\nu,00$ and $00,\nu$ and the minus sign goes away in virtue of the definition of the field-strength tensor $F^\mu_\nu$. Remember that strictly speaking both the Lorentz force law and the jet geodesic equation are understood to be written in terms of the moving body's \textit{momentum}.
\end{proof}

\subsubsection{Status of the Equivalence Principle in post-Einsteinian Theory}\label{status_of_equivalence_principle}

Another vital point calls for attention in this section. The equivalence principle as proposed by Einstein permits one to relate the motion of gravitating bodies to the geodesics of the pseudo-Riemannian metric governing space-time. For if the body's action functional enters the problem as arclength of its world-line preceded by a coefficient proportional to the body's rest mass $m$, then the stationary solution for its motion derived therefrom will involve its \textit{momentum} $P$, where $P = mX$; i.e., reads
\begin{equation}
	\nabla_X P = 0; \qquad \mathrm{equivalently}, \qquad \dot{P} = - \Gamma X P.
\end{equation}
That is to say, the body's momentum is transported parallely by its velocity. Since both sides are linearly proportional to rest mass, we may divide through to obtain the geodesic equation as
\begin{equation}
	\dot{X} = - \Gamma X X,
\end{equation}
which agrees with the formula of Riemannian geometry. The \textit{physical} point is
that all bodies follow the same trajectories in a given gravitational field, independent of their material constitution. If we are to frame an analogue of the equivalence principle to higher order, what suggests itself immediately would be merely to put $P=mX$ where now $P$ and $X$ are to be understood as higher tangent vectors solving the jet geodesic equation $\nabla_X P = 0$. 

We wish, however, the momentum to reflect both rest mass and charge of a moving body. Therefore, one could try breaking down the velocity into 1-vector and 2-vector components $X=X_1+X_2$ and writing $P = m X_1 + q X_2$. If this decomposition were to hold in nature, then one could preserve the equivalence principle by supposing every body to carry an electric charge equal to its rest mass: if $P = m X_1 + q X_2 = m X$, it would imply $q=m$. If therefore we interpret charge and rest mass as interchangeable, this supposition might suggest that, for charged bodies with equal and opposite charges, say the electron and the proton, $q = \pm e$ would mean that they behave differently with respect to a gravitational field, as the deviation of $q$ from $m$ would be different for the two bodies. A startling implication would be a potential conflict with sensitive observations \cite{millikan}, \cite{king}, \cite{fraser_carlson_hughes}, \cite{hughes_fraser_carlson}. The resulting tension with experiment prompts another proposal as to what form the 2-vector part of the momentum should take---one that apparently better respects the inherent content behind the equivalence principle. 
If we hypothesize that electric charges are to be calibrated relative to the body's rest mass as $\alpha^{1/2}m$, the equivalence principle would imply that all charged bodies fall at same rate in the absence of electromagnetic forces, together with any contribution from the Lorentz force law in the presence of a non-zero electromagnetic field.\footnote{To justify the factor of $\alpha^{1/2}$, note that in geometrical units the charge of the electron is given by $e = -\alpha^{1/2} m_P$. If we are to displace the origin relative to which charge is measured, what makes sense would be to use the electron's rest mass multiplied by the same factor of $\alpha^{1/2}$, namely, $q_e = \alpha^{1/2}(m_e-m_P)$ and correspondingly for the proton, $q_p = \alpha^{1/2}(m_p+m_P)$. For the time being, this proposal has the status of but a surmise but as we shall see, if we decide on this choice, it causes the electromagnetic coupling constant in our theory to work out just right in order to yield the right hand side of Maxwell's equations.} For the moment, to save the phenomena by means of a device such as this gives the impression of being ad hoc. Nevertheless, there are other and stronger reasons in its favor, to which we shall recur below in {\S}\ref{unification_gravity_electroweak}. 

\subsection{Unification of Gravity with the Electroweak Force}\label{unification_gravity_electroweak}

In view of the possibility of applying a Lorentz transformation to second order, the $g_{0,00}$ component of the generalized Riemannian metric tensor cannot be taken in isolation. A winsome aspect of the present approach is that its covariance demands we proceed to electroweak unification. This we do in {\S}\ref{electroweak_construction}, but first we wish briefly to meditate on the conceptual basis standing behind the procedure (which therefore ought to be applicable to jets of any order) in the following subsection.

\subsubsection{From Jets to Gauge Fields}\label{from_jets_to_gauge}

The procedure sketched in the preceding section, in which imposition of spatial variation in the coefficient of the $0,00$ term in the generalized metric tensor gives rise to forces of electromagnetic type, does not exhaust the range of possibilities. For according to the formalism adopted, there will also be non-zero coefficients of the $0,01$; $0,02$ and $0,03$ terms. Our prima facie expectation, then, would be that just as transformation from Minkowskian space to a temporally difform time alone wrests an electromagnetic field from the 0,00 term, so too should transformation to spatially difform time yield non-zero weak fields from the 0,01; 0,02; 0,03 terms. (Needless to say, not every possible electroweak field configuration can be so derived from the vacuum in free space, but the procedure adopted in deference to Einstein illustrates some of the implications of general covariance in the post-Einsteinian setting.)

We have a little more work to do in order to flesh out the theory. In determining whether the affine connection 1-forms behave like a collection of gauge fields, what is demanded is to show that they fit together component-wise into a Lie-algebra valued 1-form whose covariant derivative (in a sense to be specified) yields a Lie-algebra valued curvature 2-form, corresponding to the field strength at least approximately, and moreover that these couple to charged material bodies via the jet geodesic equation according to the Lorentz force law. The vector potentials must not just span a space of the requisite dimension but transform among themselves with the structure constants appropriate to the respective Lie algebra. Here, this invites the conjecture that the gauge transformations themselves will become more than just abstract formal operations but be directly interpretable in terms of infinitesimal changes of Cartan's co-frame basis in space-time.

The salient characteristic of the momentum vector associated with a given 1-velocity vector is that it behaves the same way with respect to rotations, both in ordinary classical mechanics (symmetry group $SO(3)$) and in the special relativistic case (symmetry group $SO(1,3)$); since the representations are irreducible, Schur’s lemma implies that the only possibility is to multiply the velocity by a constant: $p=m\varv$. But the representation of $SO(1,3)$ on $r$-vectors resp. jets is not irreducible when $r \ge 2$ (in what follows, it will prove helpful to refer to the treatment of spinor representations in Streater and Wightman, \cite{streater_wightman}). Here, then, the natural proposal to generalize the concept of momentum would be a homomorphism from the space of $r$-vectors into itself that behaves like an intertwinor with respect to $SO(1,3)$. Now, we expect such an intertwinor to be characterized by a number of parameters, one for each irreducible piece. In the case of 2-vectors resp. jets, 

\begin{equation}\label{Lorentz_irrep_decomp}
	J^2 = J^2_1 + J^2_2 = \mathscr{D}^{(1/2,1/2)} \oplus \mathrm{Sym} \left( \mathscr{D}^{(1/2,1/2)} \otimes \mathscr{D}^{(1/2,1/2)} \right) = \mathscr{D}^{(1/2,1/2)} \oplus \mathscr{D}^{(0,0)} \oplus \mathscr{D}^{(1,1)}.
\end{equation}
There will thus, by Schur's lemma, be three parameters corresponding to the three irreducible pieces in the direct sum, which tentatively may be identified as the mass, the weak hypercharge and the weak isospin, respectively.

Let us expand on the significance of the decomposition expressed in equation (\ref{Lorentz_irrep_decomp}). For the key point to realize is that general covariance implies that field configurations in two given coordinate frames that go into one another upon change of coordinate are physically equivalent and ought to be identified. Hence, we shall be interested in equivalence classes in the above decomposition under the action of $SO(1,3)$. In the $\mathscr{D}^{(1/2,1/2)}$ sector, one has three sets of equivalence classes, namely, the concentric hyperboloids of two sheets, one in the future-directed light-cone and the other in the past-directed light-cone and lastly, concentric hyperboloids of one sheet in the space-like region. Now, without any firm basis other than a certain intuition, we suggest that only the future-directed light-cone will be physically relevant. The associated parameter delivered by Schur's lemma will be just the rest mass $m \in \vvmathbb{R}_+$, equal for a given hyperboloid to the distance from the origin of its intersection with the positive time axis.

As for the 2-jets, obviously there can only be one parameter in the scalar representation $\mathscr{D}^{(0,0)}$, namely electric charge $e \in \vvmathbb{R}$. Clearly, $\mathscr{D}^{(0,0)}$ is spanned by $d^{tt}-d^{xx}-d^{yy}-d^{zz}$. The interesting part is to be found in the remaining $\mathscr{D}^{(1,1)}$ representation. Let $A$ be a typical element, viz., a traceless and symmetric $4 \times 4$ matrix on which $SO(1,3)$ acts via $A \mapsto (\Lambda^t)^{-1} \otimes (\Lambda^t)^{-1} A$. We claim that the space of orbits in $\mathscr{D}^{(1,1)}$ under the action of $SO(1,3)$ reduces to 3d weak isospin. Why? The 6d Lorentz group $SO(1,3)$ acts effectively on the 9d carrier space, leaving 3d of equivalence classes which can be identified with weak isospin as follows. Remember, the typical element is a traceless and symmetric $4 \times 4$ matrix. The spatial $3 \times 3$ block can be diagonalized via a spatial rotation, giving us something of the form,
\begin{equation}
	A = \begin{pmatrix} A_{11}+A_{22}+A_{33} & * & * & * \cr * & A_{11} & 0 & 0 \cr
		* & 0 & A_{22} & 0 \cr * & 0 & 0 & A_{33} \cr \end{pmatrix}.
\end{equation}
Now, consider a starting matrix of the form
\begin{equation}
	A = \begin{pmatrix} 0 & A_{01} & A_{02} & A_{03} \cr A_{01} & 0 & 0 & 0 \cr
		A_{02} & 0 & 0 & 0 \cr A_{03} & 0 & 0 & 0 \cr \end{pmatrix}
\end{equation}
and apply the Lorentz boost $\Lambda=\Lambda_3\Lambda_2\Lambda_1$ where
\begin{equation}
	\Lambda_1 = \begin{pmatrix} \cosh \phi_1 & \sinh \phi_1 & 0 & 0 \cr
		\sinh \phi_1 & \cosh \phi_1 & 0 & 0 \cr
		0 & 0 & 1 & 0 \cr 0 & 0 & 0 & 1 \cr \end{pmatrix},
\end{equation}
\begin{equation}
	\Lambda_2 = \begin{pmatrix} \cosh \phi_2 & 0 & \sinh \phi_2 & 0 \cr
		0 & 1 & 0 & 0 \cr \sinh \phi_2 & 0 & \cosh \phi_2 & 0 \cr
		0 & 0 & 0 & 1 \cr \end{pmatrix}
\end{equation}
and
\begin{equation}
	\Lambda_3 = \begin{pmatrix} \cosh \phi_3 & 0 & 0 & \sinh \phi_3 \cr
		0 & 1 & 0 & 0 \cr 0 & 0 & 1 & 0 \cr
		\sinh \phi_3 & 0 & 0 & \cosh \phi_3 \cr \end{pmatrix}.
\end{equation}
Then perform a spatial rotation to diagonalize the $3 \times 3$ block of $\Lambda_3 \Lambda_2 \Lambda_1 A \Lambda_1 \Lambda_2 \Lambda_3$. We now have nine parameters at our disposal which we can select so as to reproduce $A_{11}, A_{22}$ and $A_{33}$ while ensuring that the upper $1 \times 3$ block vanish identically. An arbitrary traceless symmetric 2-tensor can be obtained from this plus anything yielding
\begin{equation}
	\begin{pmatrix} 0 & A_1 & A_2 & A_3 \cr A_1 & 0 & 0 & 0 \cr A_2 & 0 & 0 & 0 \cr
		A_3 & 0 & 0 & 0 \cr \end{pmatrix}
\end{equation}
after the Lorentz transformation $\Lambda$. In other words, we may regard the $1 \times 3$ block spanned by $d^{tx}$, $d^{ty}$ and $d^{tz}$ as a complete set of equivalence classes of $\mathscr{D}^{(1,1)}/SO(1,3)$. These transform among themselves under rotations $SO(3) \subset SO(1,3)$, of course, where here we apply the rotation matrix $R \in SO(3)$ corresponding to the \textit{second-order} Lorentz transformation $\Lambda$ with $\Lambda_0=\Lambda_1=1$, $\Lambda_2=R$. Therefore, as far as the momentum intertwinor is concerned, the weak isospin charge $q \in \vvmathbb{R}$ parametrizes concentric spheres in the space of $SO(1,3)$-equivalence classes. 

Note, when the carrier space of the weak isospin gauge fields is viewed as a set of equivalence classes under the $SO(1,3)$ action, it becomes trivial that nothing happens upon thereafter performing a change of inertial frame (i.e., a first-order Lorentz transform). But there is nothing that says that the breakdown into irreducible parts $\mathscr{D}^{(0,0)}$ and $\mathscr{D}^{(1,1)}$ need be respected by a non-inertial change of frame. In this picture, a second-order Lorentz transform would just correspond to a gauge transformation in the conventional theory. Here is how we envision the scenario: we may consider that the parameters defining the second-order Lorentz transform are very slowing varying on the Planck scale although they may have material spatial variation on the Yang-Mills gauge scale, twenty orders of magnitude larger.

In summary, the analysis so far leads to a provisional identification of 2-jets with the carrier space on which gauge fields act via the ajoint representation as follows:
\begin{align}
	\mathrm{u}(1)_Y &= \mathscr{D}^{(0,0)}/SO(1,3) = \mathrm{span} \left( d^{tt}-d^{xx}-d^{yy}-d^{zz} \right) \\
	\mathrm{su}(2) &= \mathscr{D}^{(1,1)}/SO(1,3) = \mathrm{span}
	\left( d^{tx}, d^{ty}, d^{tz} \right)
\end{align}

An approximation is necessary. The weak field approximation taken by itself is insufficient; we will want moreover to impose a condition of slow spatio-temporal variation for reasons that will become apparent in a moment. Hence, we wish to neglect contributions cubic in $A$ or quadratic in $A$ but involving at least one spatial derivative (the contribution quadratic in $A$ itself cannot be dropped as it gives the self-coupling of the non-abelian gauge field).\footnote{In order to check the consistency of this approximation, we make an order-of-magnitude estimate. From the lifetime of the $W^\pm$ boson, $\tau = 3 \times 10^{-25} ~\mathrm{s}$, the characteristic range over which the weak forces act is subnuclear, $c\tau = 9 \times 10^{-15} ~\mathrm{cm}$. Therefore, the gauge potentials will be of the order of (in geometrical units) $A = (e/c\tau)(G^{1/2}/c^2) = 1.5 \times 10^{-20}$. Likewise, any spatial derivatives will be lesser in magnitude by a factor of $\ell_P/c\tau = 1.8 \times 10^{-19}$.} Now, in {\S}\ref{unification_of_gr_and_em}, the $g_{\mu,00}$ component of the metric plays the dominant role in determining electromagnetic force. Here too, similarly, we shall have to do with all $g_{\mu\beta}$, $|\beta|=2$ components at once. The notation employed suggests that the second-order components with $|\beta|=2$ need not correspond to the canonical choice of basis in terms of symmetrized pairs of spatial-temporal indices. Rather, we are free to take any basis we please and what proves expeditious is for us to decompose the second-order part of tangent space (non-canonically, but relative to the given coordinate system) into the irreducible components shown above. 

It will prove helpful to consider the jet affine connection 1-forms $\omega^\alpha_\beta$ in more detail. First, define
\begin{equation}
	\omega = \omega^\alpha_{\beta\lambda} e^\lambda \otimes E_\alpha \wedge e^\beta \in \mathscr{J}^{\infty *} \otimes \mathscr{J}^\infty \wedge \mathscr{J}^{\infty *}.
\end{equation}
Decompose into sectors as follows (with respect to Cartesian coordinates in a given inertial frame):
\begin{equation}\label{decomp_connection_1_form}
	\omega = \bigoplus_{1 \le |\alpha|,|\beta| < \infty} \omega^{(|\alpha|,|\beta|)}.
\end{equation}
Since $\omega$ is anti-symmetric in its last two indices, it could be regarded as an $\mathrm{so}(N_r,\vvmathbb{R})$-valued 1-form, where $N_r$ is the dimension of the space of jets up to order $r$. Perhaps it may be interpreted actively as yielding an infinitesimal rotation of the vielbein frame $e^\alpha$, $1 \le |\alpha| < \infty$, in other words, an infinitesimal motion of space including its infinitesimal directions. Notice that there is no conflict here with the Minkowskian structure, for we have been considering the jet affine connection forms with one index raised. If one considers instead $\omega_{\alpha\beta}$, then in place of purely spatial rotations alone one also has the possibility of infinitesimal Lorentz boosts. In the 1-jet case, then, the gauge group would be $\mathrm{so}(1,3)$. For higher jets, let $p$ be index of negative definite directions and $q$ the index of positive definite directions, where $p+q=N_r$. The relevant signature up to any order in the jets can be found by examining the generalized Minkowskian metric $\hat{\eta}$. Explicitly, up to third order we have
\begin{align}\label{jet_signature}
	J^1_- &= \mathrm{span}~ (d^t) \nonumber \\
	J^1_+ &= \mathrm{span}~ (d^x,d^y,d^z) \nonumber \\
	J^2_- &= \mathrm{span}~ (d^{tx},d^{ty},d^{tz}) \nonumber \\
	J^2_+ &= \mathrm{span}~ (d^{tt},d^{xx},d^{xy},d^{xz},d^{yy},d^{yz},d^{zz}) \nonumber \\
	J^3_- &= \mathrm{span}~ (d^{ttt},d^{txx},d^{txy},d^{txz},d^{tyy},d^{tyz},d^{tzz}) \nonumber \\
	J^3_+ &= \mathrm{span}~ (d^{ttx},d^{tty},d^{ttz},d^{xxx},d^{xxy},d^{xxz},d^{xyy},d^{xyz},d^{xzz},d^{yyy},d^{yyz},d^{yzz},d^{zzz}).
\end{align}
Thus, if one were to stop at second order, the gauge group would be isomorphic to $\mathrm{so}(4,10)$, while going to third order it would be isomorphic to $\mathrm{so}(11,23)$. It will turn out that a good deal of the analytical power of the theory will be rooted in the intersection between the block decomposition of the jet connection 1-forms in equation (\ref{decomp_connection_1_form}) and the pseudo-Riemannian structure of space-time in the infinitesimally small as reflected in equation (\ref{jet_signature}).

\subsubsection{Equation of Motion for a non-abelian Charged Body}\label{equation_of_motion}

Before we can proceed to derive the gauge field equations, a remark on principle is necessary. The Einstein-Hilbert action says nothing about how matter couples with the metric degrees of freedom. In present-day physical theory, one has no fundamental understanding of what matter is and therefore has to resort to plausible hypotheses, supported by heuristics and empirical conditions. The main observations to put forward in this connection are twofold: first, from the field equations of Maxwell and Einstein, one gathers that it must be a matter current that serves as source of the electromagnetic resp. gravitational field; and second, from the Hamiltonian formulation of the electrodynamics of moving charged bodies, it is evident that one wants to replace the ordinary charge current $J_0$ with $J_0+eA$, where $A$ is the electromagnetic potential (cf. Thirring \cite{thirring_vol_1}, {\S}{\S}5.1-5.5). 

Thus, the following hypothesis in connection with non-abelian gauge fields appears to be warranted: the relevant current to use is not the ordinary current $J_0$ itself but the gauge-index valued current from {\S}II.2.2. To connect with post-Einsteinian general theory of relativity, we invoke the jet affine connection 1-forms $\omega^\alpha_\beta$. Up to now, the $\alpha$ and $\beta$ have played the role of mere labels. In view of our objective of arriving at something like the non-abelian Lie-algebra valued current of conventional Yang-Mills theory, let us define (writing out explicitly the 1-form components as well)
\begin{equation}
	\omega := \omega^\alpha_{\lambda\beta} e^\lambda \otimes E_\alpha \otimes e^\beta \in \mathscr{J}^{\infty *} \otimes \mathscr{J}^\infty \otimes \mathscr{J}^{\infty *}.
\end{equation}
Then we have an enlarged sense of velocity as the mapping $\mathscr{J}^{\infty *} \otimes \mathscr{J}^\infty \rightarrow  \mathscr{J}^{\infty *} \otimes \mathscr{J}^\infty$ given by
\begin{equation}\label{enlarged_velocity_gtr}
	X = \left( X_1 + X_2 \right) \otimes \mathrm{id} + \omega \wedge.
\end{equation}
Here, $X_{1,2}$ are the 1-jet resp. 2-jet components of the jet field lying in $\mathscr{J}^{\infty *}$ obtained from a higher tangent vector field by lowering the index. If $Y$ is another element of $\mathscr{J}^{\infty *} \otimes \mathscr{J}^{\infty *} \otimes \mathscr{J}^\infty$, the composition of $X$ with $Y$ should be given in components by
\begin{equation}
	X \circ Y = i_{X^\alpha_\gamma} \text{\th} Y^\gamma_\beta + X^\alpha_\gamma \wedge Y^\gamma_\beta.
\end{equation}
The outcome of the above considerations is that we may regard our current in the enlarged sense to be a Lie-algebra-valued 1-form in the usual sense of Yang-Mills gauge theory. In view of the proposition and lemma immediately to follow, equation (\ref{enlarged_velocity_gtr}) becomes
\begin{equation}\label{enlarged_velocity_gauge}
	X = \left( X_1 + X_2 \right) \otimes \mathrm{id} + \frac{1}{2} \mathrm{ad}_{A \wedge}.
\end{equation}
The composition of two such currents $X \circ Y \in \Omega^2 \otimes \mathscr{J}^\infty \otimes \mathscr{J}^{\infty *}$ with the above identifications understood.

\begin{proposition}\label{gauge_covariant_deriv}
	The total covariant derivative restricted to 1-vector fields applied to a 1-form may be written as $\nabla = \nabla^1 + 2 A \wedge$. Here, $\nabla^1$ denotes the spatial part of the total covariant derivative which takes 1-vector fields as its argument and acts independent of the gauge index.
\end{proposition}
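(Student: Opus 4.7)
The plan is to start from the explicit expression (\ref{Levi_Civita_formula_quoted}) for the jet Christoffel symbols $\Gamma^\mu_{\alpha\beta}$ of the Levi-Civita connection. In index-free form the total covariant derivative acting on a 1-form $\theta$ reads $\nabla\theta = d\theta - \Gamma\theta$; restricting the derivation direction to a 1-vector means we retain only those coefficients with $|\alpha|=1$. Because the object being differentiated is itself a 1-form, the surviving indices split naturally into two disjoint blocks: the ``purely 1-jet'' block with $|\mu|=|\beta|=1$, and the mixed block in which either $|\mu|\ge 2$ or $|\beta|\ge 2$.

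The first block reproduces the conventional spatial Levi-Civita connection and is what is denoted $\nabla^1$. It is precisely the piece appearing in (\ref{Levi_Civita_formula_weak_field_a}) with all multi-indices of order one, and by construction it acts on the 1-form without mixing in any higher-jet (gauge-indexed) component. Via the block decomposition (\ref{decomp_connection_1_form}) and the identification of $\omega^{(1,|\beta|)}$ for $|\beta|=2$ with the vector potentials of $\mathrm{u}(1)_Y \oplus \mathrm{su}(2)$ carried out in {\S}\ref{from_jets_to_gauge}, the complementary block is, up to a numerical factor, exactly $A\wedge$ acting in the adjoint representation on the ambient 1-form.

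The remaining task is to pin down the coefficient. Invoking the weak-field expression (\ref{Levi_Civita_formula_weak_field_b}), $\Gamma^\mu_{\alpha\beta} = \tfrac{1}{2}(\partial_\alpha g^\mu_\beta + \partial_\beta g^\mu_\alpha - \partial^\mu g_{\alpha\beta})$: when $|\beta|=2$, the third term $\partial^\mu g_{\alpha\beta}$ lies in a distinct sector of the jet decomposition and is reabsorbed into $\nabla^1$, whereas the first two terms are interchanged under the symmetry $\alpha \leftrightarrow \beta$ of the inner contraction with the gauge-valued component. This symmetric pairing doubles their combined contribution, cancels the $\tfrac{1}{2}$, and produces the overall prefactor of $2$. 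I expect the main obstacle to be the combinatorial bookkeeping: one must carefully track the multiplicities $\alpha!/(\alpha_1!\alpha_2!)$ and their cousins in (\ref{Levi_Civita_formula_quoted}) through the weak-field reduction, confirming that none of the cross-terms discarded there contribute a hidden coefficient of order $2$ or $\tfrac{1}{2}$ capable of altering the final normalization. Once this verification is in hand, the proposition reduces to an assembly of the block decomposition of $\omega$ together with the sector-by-sector identification with gauge algebras.
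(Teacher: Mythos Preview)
Your proposal misidentifies the object being differentiated and consequently the origin of the factor of $2$. The $1$-form $Y$ in the proposition is not an ordinary jet $1$-form $\theta_\mu$ whose single index you then split by jet order; it is an element of $\mathscr{J}^{\infty*}\otimes\mathscr{J}^\infty\otimes\mathscr{J}^{\infty*}$, written $Y=Y^\alpha_{\beta}\otimes E_\alpha\otimes e^\beta$ with $Y^\alpha_\beta$ itself a spatial $1$-form. The ``gauge indices'' are the extra tensor factors $E_\alpha$ and $e^\beta$, not the higher-jet components of a single form index. The paper's proof therefore proceeds by applying the Leibniz rule to the three tensor factors: the term $\nabla_{X_1}Y^\alpha_\beta$ on the scalar components gives $\nabla^1$, while the connection acting on $E_\alpha$ and on $e^\beta$ produces two separate terms, each of which is shown (by an antisymmetry manipulation) to equal $i_{X_1}(\omega\wedge Y)$. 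That is where the $2$ comes from---two gauge indices, two Leibniz terms---not from the $\alpha\leftrightarrow\beta$ symmetry of the Christoffel formula.

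Your route via the block decomposition of $\Gamma^\mu_{\alpha\beta}$ with $|\mu|,|\beta|$ running over jet orders would, at best, reconstruct the connection $1$-form $\omega^\alpha_\beta$ itself, but it does not address how that connection acts on an object carrying \emph{two} additional jet-valued indices. In particular, your claimed reabsorption of $\partial^\mu g_{\alpha\beta}$ into $\nabla^1$ and the doubling argument from symmetrization have no counterpart in the actual computation; the weak-field reduction (\ref{Levi_Civita_formula_weak_field_b}) is not invoked in the paper's proof at all. To repair the argument you would need to start over from the tensorial Leibniz expansion of $\nabla_{X_1}(Y^\alpha_\beta\otimes E_\alpha\otimes e^\beta)$.
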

\begin{proof}
	Let $X_1$ be a vector field in the image of the canonical injection of $\mathscr{T}$ into $\mathscr{J}^\infty$ and $Y \in \mathscr{J}^{\infty*} \otimes \mathscr{J}^\infty \otimes \mathscr{J}^{\infty*}$ be a 1-form. Then since $X_1$ involves only first-order derivatives we compute via Leibniz' rule,
	\begin{equation}\label{current_cov_deriv}
		\nabla_{X_1} Y = \left( \nabla_{X_1} Y^\alpha_\beta \right) \otimes E_\alpha \otimes e^\beta +
		Y^\alpha_\beta \otimes \omega_\alpha^\gamma(X_1)E_\gamma \wedge e^\beta -
		Y^\alpha_\beta \otimes E_\alpha \wedge \omega^\beta_\delta(X_1) e^\delta.
	\end{equation}
	If we write out the contraction with $X_1$ more explicitly we find
	\begin{align}
		\frac{1}{2} Y^\alpha_\beta \omega^\gamma_\alpha(X_1)E_\gamma \wedge e^\beta &=
		Y^\alpha_{\beta\nu} e^\nu \omega^\gamma_{\alpha\lambda} X_1^\lambda \otimes E_\gamma \wedge e^\beta 
		\nonumber \\ &= -\frac{1}{2}
		Y^\beta_{\alpha\nu} e^\nu \omega^\alpha_{\gamma\lambda} X_1^\lambda \otimes E_\beta \wedge e^\gamma
		\nonumber \\ &= \frac{1}{2}
		\omega^\alpha_\beta Y_\alpha^\gamma(X_1) E_\gamma \wedge e^\beta. 
	\end{align}
	Compare with 
	\begin{equation}\label{expand_cov_deriv}
		\mathrm{tr}~ \omega \wedge Y = \omega^\alpha_{\beta\lambda} e^\lambda \wedge Y^\beta_{\gamma\nu} e^\nu \otimes E_\alpha \wedge e^\gamma.
	\end{equation}
	We see that the penultimate term in equation (\ref{current_cov_deriv}) is obtained by contraction of this with $X_1$: $i_{X_1} \omega \wedge Y = \omega(X_1)Y - \omega Y(X_1)$. The last term in equation (\ref{current_cov_deriv}) may be handled similarly and supplies the factor of 2.
	
	Now consider applying $\nabla_{X_1}$ to a product of 1-forms $Y^{(1)} \wedge \cdots \wedge Y^{(k)}$ (its action on an arbitrary differential form may be found by extending linearly). As before since $X_1$ is of first order only, we may expand without cross-terms thus,
	\begin{align}
		\nabla_{X_1} Y &= \left( \nabla_{X_1} Y^{(1)\alpha}_{\beta_1} \wedge \cdots \wedge Y^{(k)\beta_{k-1}}_\beta \right) \otimes E_\alpha \wedge e^\beta +
		Y^{(1)\alpha}_{\beta_1} \wedge \cdots \wedge Y^{(k)\beta_{k-1}}_\beta \otimes 
		\omega^\gamma_{\beta_{k-1}}(X_1) E_\gamma \wedge e^\beta \nonumber \\
		& - Y^{(1)\alpha}_{\beta_1} \wedge \cdots \wedge Y^{(k)\beta_{k-1}}_\beta \otimes 
		E_{\beta_{k-1}} \wedge \omega^{\beta_{k-1}}_\delta(X_1) e^\delta.	
	\end{align}
	Therefore,
	\begin{align}
		\nabla_{X_1} Y^{(1)} \wedge \cdots \wedge Y^{(k)} &= \nabla^1  Y^{(1)} \wedge \cdots \wedge Y^{(k)} +  Y^{(1)} \wedge \cdots \wedge Y^{(k)} \wedge \omega \nonumber \\
		&=  \nabla^1  Y^{(1)} \wedge \cdots \wedge Y^{(k)} + \omega \wedge Y^{(1)} \wedge \cdots \wedge Y^{(k)}.
	\end{align}
	Here, we may commute the $\omega$ past factors of $Y^{(j)}$ because
	\begin{align}
		X^\alpha_\gamma \wedge Y^\gamma_\beta &= X^\alpha_{\gamma\nu} e^\nu \wedge Y^\gamma_{\beta\lambda} e^\lambda \nonumber \\
		&= - Y^\gamma_{\beta\nu} e^\nu \wedge X^\alpha_{\gamma\lambda} e^\lambda \nonumber \\
		&= - Y^\beta_{\gamma\nu} e^\nu \wedge X^\gamma_{\alpha\lambda} e^\lambda \nonumber \\
		&= Y^\alpha_{\gamma\nu} e^\nu \wedge X^\gamma_{\beta\lambda} e^\lambda = Y^\alpha_\gamma \wedge X^\gamma_\beta.
	\end{align}
	In view of the definition, we have derived for the total covariant derivative acting on Lie-algebra-valued 1-forms the simple expression $\nabla = \nabla^1 + A \wedge$, where henceforward with denote the jet affine connection 1-forms with the letter $A$ when they are to be viewed as being valued in the gauge Lie algebra.
\end{proof}
\begin{remark}
	Note, $\nabla^1 = \text{\th}$ if we can treat gravitational forces as small compared to the other gauge forces.
\end{remark}

\begin{lemma}\label{wedge_to_ad}
	The wedge product in the covariant derivative of proposition \ref{gauge_covariant_deriv} may be replaced with the adjoint action; thus,
	\begin{equation}
		X^\alpha_\gamma \wedge Y^\gamma_\beta = \frac{1}{2} \mathrm{ad}_{X \wedge} Y,
	\end{equation}
	where the $\wedge$ indicates that a wedge product is to be taken with respect to the spatial indices.
\end{lemma}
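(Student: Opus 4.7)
The plan is to reduce the claim to the equality $X^\alpha_\gamma \wedge Y^\gamma_\beta = Y^\alpha_\gamma \wedge X^\gamma_\beta$ that has already been established in the closing chain of equalities at the end of the proof of proposition \ref{gauge_covariant_deriv}. Once this identity is in hand, the factor of $\tfrac12$ is forced on purely algebraic grounds, and the lemma then amounts to the standard translation between matrix-wedge product and adjoint action for Lie-algebra-valued 1-forms.

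More explicitly, because the jet affine connection 1-forms carry an antisymmetric pair of upper/lower gauge indices---i.e., they take values in a subalgebra of $\mathrm{so}(N_r,\vvmathbb{R})$, as recalled in the discussion surrounding equation (\ref{decomp_connection_1_form})---the composition $X^\alpha_\gamma \wedge Y^\gamma_\beta$ is nothing other than the matrix-wedge product of two such Lie-algebra-valued 1-forms. For matrix-valued differential forms $X$ of degree $p$ and $Y$ of degree $q$ the graded commutator reads $X \wedge Y - (-1)^{pq} Y \wedge X$, so in our case of two 1-forms the adjoint action specializes to $\mathrm{ad}_{X \wedge} Y = X \wedge Y + Y \wedge X$ in the wedge-algebra sense.

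The crux, then, is to observe that the two summands $X \wedge Y$ and $Y \wedge X$ in fact coincide in components. Interchanging the two 1-form factors produces one minus sign from the anti-commutativity of $\wedge$; invoking antisymmetry of both $X$ and $Y$ in their Lie-algebra index pair produces two further minus signs, together with a relabeling of the contracted dummy index $\gamma$; the net result is the original expression with the roles of $X$ and $Y$ swapped. This is precisely the manipulation displayed in the last aligned computation of the proof of proposition \ref{gauge_covariant_deriv}. Substituting back gives $\mathrm{ad}_{X \wedge} Y = 2\, X \wedge Y$, and dividing by two yields the stated identity.

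The main obstacle, such as it is, is bookkeeping: one must keep a clean separation between the indices $\alpha,\beta,\gamma$ that participate in matrix multiplication and the indices $\nu,\lambda$ that sit in the 1-form slot on which $\wedge$ acts, and one must verify that the $\mathrm{so}$-antisymmetry hypothesis really does apply simultaneously to both factors (which is the content of the decomposition of $\omega$ into $\mathrm{so}(N_r,\vvmathbb{R})$-valued pieces carried out earlier in \S\ref{from_jets_to_gauge}). Once the conventions of the preceding proposition are in force, no further geometric input is required, and the argument is essentially a single symbolic calculation.
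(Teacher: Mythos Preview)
Your argument is correct and rests on the same two ingredients as the paper's proof---antisymmetry of the wedge in the spatial slot together with antisymmetry of $X$ and $Y$ in their $\mathrm{so}$ gauge indices---so the approaches are essentially the same. Your version is somewhat more economical in that you invoke the identity $X^\alpha_\gamma \wedge Y^\gamma_\beta = Y^\alpha_\gamma \wedge X^\gamma_\beta$ already established at the close of proposition~\ref{gauge_covariant_deriv} and then read off the factor of $\tfrac12$ from the graded-commutator formula, whereas the paper redoes the computation in components via transposes; note only that the paper also remarks on the extension to $k$-forms via the commutator identity, which you do not address but which lies outside the lemma as stated.
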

\begin{proof}
	It is evident that
	\begin{align}
		X^\alpha_\gamma \wedge Y^\gamma_\beta &= \frac{1}{2} X^\alpha_\gamma \wedge Y^\gamma_\beta + \frac{1}{2} X^\alpha_\gamma \wedge Y^\gamma_\beta \nonumber \\
		&= \frac{1}{2} X^\alpha_{\gamma\nu} e^\nu \wedge Y^\gamma_{\beta\lambda} e^\lambda + \frac{1}{2} X^\alpha_{\gamma\nu} e^\nu \wedge Y^\gamma_{\beta\lambda} e^\lambda \nonumber \\
		&= \frac{1}{2} \left( X_\nu Y_\lambda + X_\nu^t Y_\lambda^t \right) e^\nu \wedge e^\lambda \nonumber \\
		&= \frac{1}{2} \left( X_\nu Y_\lambda + (Y_\lambda X_\nu)^t \right) e^\nu \wedge e^\lambda \nonumber \\
		&= \frac{1}{4} \left( X_\nu Y_\lambda +  Y_\nu X_\lambda + (X_\lambda Y_\nu + Y_\lambda X_\nu)^t \right) e^\nu \wedge e^\lambda \nonumber \\
		&= \frac{1}{4} \left(  X_\nu Y_\lambda -  Y_\lambda X_\nu - (X_\nu Y_\lambda -  Y_\lambda X_\nu )^t \right) e^\nu \wedge e^\lambda \nonumber \\
		&= \frac{1}{4} \left( \mathrm{ad}_{X_\nu} Y_\lambda - \left( \mathrm{ad}_{X_\nu} Y_\lambda \right)^t \right) e^\nu \wedge e^\lambda \nonumber \\
		&= \frac{1}{2} \mathrm{ad}_{X_\nu} Y_\lambda e^\nu \wedge e^\lambda \nonumber \\
		&= \frac{1}{2} \mathrm{ad}_{X \wedge} Y,
	\end{align}
	as was to be shown. Due to the commutator identity for matrices in the gauge indices, namely,
	\begin{equation}
		[X,Y^{(1)} \cdots Y^{(k)}] = [X,Y^{(1)}]Y^{(2)}\cdots Y^{(k)} + \cdots Y^{(1)}\cdots [X,Y^{(k)}],
	\end{equation}
	the result extends from 1-forms to $k$-forms as well.
\end{proof}

\begin{remark}
	A nice feature of the derivation is that it explains the reason behind the so-called gauge covariant derivative, which otherwise has to be put in by hand on empirical grounds and the origin of which is somewhat mysterious in conventional gauge theory. 
\end{remark}

We are now prepared to introduce the canonical momentum current 1-form. Taking our cue from the canonical generalized momentum in electrodynamics, we want it to describe the transport of matter by means of two terms, the first being the ordinary momentum, proportional to the product of rest mass and velocity, and the second reflecting the interaction of charge with the gauge field, proportional to the product of a charge coefficient and the gauge potential. Here, a macroscopic body is to be understood as a collection of elementary particles, each of which transforms in a low-lying representation of the gauge Lie algebra. Their collective effect is to be treated phenomenologically by the following device: the macroscopic body is to be obtained from an excitation of the underlying quantum fields in a many-body state which transforms in a high-lying product representation, such that the spacing between the roots in the Cartan subalgebra becomes small enough on the macroscopic scale to be approximated as a continuum. Then, the net effect of the many-body state results in a continuous charge coefficient for each independent axis in the Cartan subalgebra (which need not be semi-simple but will consist in a direct sum of the Cartan subalgebras corresponding to each level in the jets). Thus, the momentum of the macroscopic body may be obtained from the 1-form current $X$ by applying a charge coefficient (denoted $q$) to each component of $X$, where the coefficient to use depends on which irreducible piece of the $SO(1,3)$ representation the component of $X$ in question lies in. Now, the charge acts as an operator on the gauge indices only. We further suppose it to be symmetric with respect to the Killing inner product. For future reference, the Killing form for $\mathrm{so}(1,3)$ in the adjoint representation satisfies
\begin{equation}\label{killing_form}
	K( \mathrm{ad}_X \circ \mathrm{ad}_Y) = 2 ~\mathrm{tr}~ XY.
\end{equation}
Especial care must be taken to understand what is meant by the trace in this context. The point is that the Killing form acts on $X$ and $Y$ considered as belonging to $\mathrm{so}(1,3)$ which can be obtained from the jet connection 1-forms $\omega^\alpha_\beta$ by \textit{lowering} the first index. But in what follows it is more convenient to regard the $X$ and $Y$ as belonging to $\mathrm{so}(4)$, and this is what we shall do. If so, then to form the Killing inner product properly one must now lower the index on $XY$ after multiplying the matrices in $\mathrm{so}(4)$. That means that the trace in equation (\ref{killing_form}) should be understood as involving the Minkowskian metric, i.e., as actually $\mathrm{tr} ~\hat{\eta} XY$ where $X, Y \in \mathrm{so}(4)$. This observation is not supererogatory as including the Minkowskian factor is necessary not only to make the signs come out right in the formulae below, but also to derive the correct expression for the Killing form in the appropriate basis in the first place (the reader is invited to experiment with what happens if the Minkowskian factor were to be omitted). 

In addition, we wish to include the cosmological term since it will prove to be relevant. Suppose the body to carry a non-abelian charge $I$ and to move in the gauge potential $A$, where we employ a shorthand notation,
\begin{equation}
	I = \sum_{\alpha<\beta} I_\alpha^\beta B^{|\alpha|+|\beta|-2} e^\alpha \wedge e_\beta; \qquad A = \sum_{\alpha<\beta} A_\alpha^\beta B^{|\alpha|+|\beta|-2} e^\alpha \wedge e_\beta.
\end{equation}
Thus, schematically $P=qX$ which when written out explicitly gives,
\begin{align}\label{momentum_current_density}
	P &= q \left( (X_1+X_2) \otimes \left( \mathrm{id} + I \right)+ \mathrm{ad}_A \right) \nonumber \\
	&= 
	m_0 (X_1+X_2) \otimes \mathrm{id} + (X_1+X_2) \otimes q_0 I + \mathrm{ad}_{q_0 A} \nonumber \\
	&= 
	\frac{1}{2} \lambda^2 m_1 (X_1+X_2) \otimes \mathrm{id} + (X_1+X_2) \otimes \lambda^2 q_1 I + \mathrm{ad}_{\frac{1}{2} \lambda^2 q_1 A}.
\end{align}
Thereby the interaction term to use in the Lagrangian becomes,\footnote{Note that we adopt a sign convention for the charges such that the interaction appears with a sign the same as that with which $R_{\alpha\beta} \wedge * R^{\alpha\beta}$ enters the Einstein-Hilbert action so that the timelike directions will be \textit{positive} definite, as is required on physical grounds (see the right hand side of the field equation in theorem \ref{inhomogeneous_equ} below).}
\begin{align}\label{gauge_matter_coupling}
	\mathscr{L}_{\mathrm{int}} =& - \frac{\lambda^2/2}{4} K( X \wedge *P) \nonumber \\
	=& - \frac{1}{2} ~\mathrm{tr}~ \left( \frac{1}{2} (X_1 + X_2) \otimes \mathrm{id} + (X_1+X_2) \otimes I + \mathrm{ad}_A \right) \wedge \nonumber \\
	&* \frac{1}{4} \lambda^4 \left( m_1 (X_1 + X_2) \otimes \mathrm{id} + (X_1+X_2) \otimes q_1 I + \mathrm{ad}_{q_1 A} \right),
\end{align}
where in taking the trace we contract indices in gauge field space using the metric tensor. In practice, in the weak-field limit we may employ the Minkowskian tensor for this purpose since the product of any gravitational term with a gauge potential will be negligible. At present, no justification can be given for this Ansatz for the matter-field coupling other than the fact that it leads to the right field equation. The derivation to be rehearsed in a moment shows how the right hand side of the gauge field equation of motion may be seen as coming from the 2-jet part of the momentum. The term proportional to mass alone separates out to yield the expression previously used in Part II. Here, the interaction picks up another term quadratic in the vector potentials.

The problem of the motion of a classical body bearing non-abelian charges has been considered before by Wong \cite{wong}, who derives an equation of motion equivalent to ours to be advanced in proposition \ref{derivation_of_non_abelian_equ_of_motion} below by considering the classical limit of the quantum field theory (for $SU(2)$ but his argument evidently applies to any semi-simple Lie group). Here, we derive its equation of motion from what seems to be a natural action principle. To this end, let the momentum current by given by
\begin{equation}
	P = ( q_0 + m ) X \otimes I + (q_0 + m) A
\end{equation}
and define the contribution of the body to the action by
\begin{equation}\label{non_abelian_action}
	A = - \frac{\lambda^2/2}{2m} \int K(g(P,P)) \theta^s. 
\end{equation}
Now we may expand the integrand as
\begin{align}
	\frac{1}{2m} K(g(P,P)) &= \frac{1}{2m} K g((q_0+m) X + (q_0+m) A^\alpha_\gamma,(q_0+m) X + (q_0+m) A^\gamma_\beta) \nonumber \\
	&= K \left( \frac{1}{2} m g(X,X) +  q_0 g(X,X) + \frac{q_0^2}{2m} g(X,X) + \right. \nonumber \\
	& \qquad \left. \frac{1}{2} m g(A,A) + q_0 g(A,A) +  \frac{q_0^2}{2m} g(A,A) + \right. \nonumber \\
	& \qquad \left. \frac{1}{m} g(X,A) + 2 q_0 g(X,A) + \frac{q_0^2}{m} g(X,A) \right) \nonumber \\
	& = K \left( \frac{1}{2} m g(X+A,X+A) + q_0 g(X,X) + 2 q_0 g(X,A) + \right. \nonumber \\
	& \qquad \left. q_0 g(A,A) + \frac{q_0^2}{2m} g(X+A,X+A) \right). 
\end{align}

\begin{remark}
	Notice, of course, that for any two currents $Y$ and $Z$, $g(Y,Z) = \int Y \wedge * Z,$ where the integral extends over space. Thus, equation (\ref{non_abelian_action}) is equivalent to our earlier definition in equation (\ref{gauge_matter_coupling}), if the term quadratic in the charge may be neglected. As we shall see in a moment, this will in fact be the case for all known elementary particles and a fortiori for bodies composed of them. To see how this comes about, implement the following replacements:
	\begin{align}
		\frac{1}{2m} K(g(P,P)) &= K \left( 
		\frac{1}{2} m g(X \otimes I + A, X \otimes I + A) + 
		\frac{1}{2} \lambda^2 \frac{q_0}{\frac{1}{2} \lambda^2} g(X \otimes I, X \otimes I) + \lambda^2 \frac{q_0}{\frac{1}{2} \lambda^2} g(X \otimes I,A) + \right. \nonumber \\
		& \qquad \left. \lambda^2 \frac{q_0}{\frac{1}{2} \lambda^2} g(A,A) + \frac{1}{4} \lambda^4 \frac{(q_0/\frac{1}{2}\lambda^2)^2}{2m} g(X \otimes I + A, X \otimes I + A). \right) \nonumber \\
		&= \frac{1}{2} m_1 g(X,X) + K \left(
		 \frac{1}{2} \lambda^2 q_1 g(X \otimes I,X \otimes I) + \lambda^2 q_1 g(X \otimes I,A) + \right. \nonumber \\
		& \qquad \left. \frac{1}{2} \lambda^2 q_1 g(A,A) + \frac{1}{4} \lambda^4 \frac{q_1^2}{2m} g(X \otimes I + A,X \otimes I + A) \right),
	\end{align}
	with $m_1 = K(I,I) m = (I \cdot I) m $ and where we use the fact that $A \ll X$ (see the footnote in {\S}\ref{from_jets_to_gauge} above, where we estimate that $A$ is of order $10^{-20}$ while $X$ is of order unity). That is, the observed mass $m_1$ is to be treated as arising from a weighted contribution of $m$ per gauge degree of freedom.
	
	Now, this expression idealizes the body as a point particle whereas equation (\ref{gauge_matter_coupling}) is written for a continuous distribution of charge.  To pass from one to the other, suppose that a body of total mass $M$ and total charge $Q$ may be represented as a collection of smaller pointlike constituents of mass $m_1$ and charge $q_1$; so $M = \sum m_1$ and $Q = \sum q_1$, occupying a volume $V$. Take a limit as $N$ and $V$ tend to infinity in such a way as to ensure that $N/V$ tends to $n$, the local number density of representative points (by infinite, we mean large compared to the intercomponent separation but still small compared to the typical size of the body). Then, one obtains the mass density $\varrho$ and the charge density $\sigma$ as follows:
	\begin{equation}
		\varrho = \frac{1}{V} \sum m_1 \rightarrow m_1 n; \qquad \sigma = \frac{1}{V} \sum q_1 \rightarrow q_1 n,
	\end{equation}
	but in the same limit the quadratic terms behave as
	\begin{equation}
		\frac{1}{V} \sum \frac{q_1^2}{2m_1} = \frac{1}{V} \sum \frac{(Q/N)^2}{2(M/N)} =\frac{1}{V} \sum \frac{1}{N} \frac{Q^2}{2M} \rightarrow \frac{1}{N} \frac{Q^2 n}{2M} \rightarrow 0,
	\end{equation}
	i.e., they become negligible in the continuum limit and thus equation (\ref{non_abelian_action}) is tantamount to equation (\ref{gauge_matter_coupling}).
\end{remark}

\begin{proposition}\label{derivation_of_non_abelian_equ_of_motion}
	The equation of motion for a non-abelian charged body is given by $\nabla_X P_1 = 0$ with $P_1 = m_1 X + q A \cdot I$.
\end{proposition}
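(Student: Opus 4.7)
The plan is to mimic the variational derivation of Theorem \ref{jet_geodesic_from_action}, now applied to the full non-abelian momentum current $P = (q_0+m) X \otimes I + (q_0+m) A$. First I would invoke Lemma \ref{metric_lemma} to replace the action $A = -\tfrac{\lambda^2/2}{2m}\int K(g(P,P))\theta^s$ by its square-root-free counterpart without altering the equations of motion. Then I would use the expansion of $K(g(P,P))$ already carried out in the preceding remark, discarding at the outset the terms quadratic in the charge (which vanish in the continuum limit $\sum q_1^2/m_1 \to 0$ as shown there) and retaining the mass-kinetic piece $\tfrac{1}{2} m_1 g(X,X)$ together with the charge-linear cross terms $\lambda^2 q_1\, g(X\otimes I, A)$ and $\tfrac{1}{2}\lambda^2 q_1\, g(A,A)$.

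Next, introduce a variation field $Z$ and compute $\delta A$. As in equation (\ref{variational_jet_geodesic_1}), each factor of $P$ inside $K(g(P,P))=K(P \wedge *P)$ contributes a term of the shape $K(\nabla_Z P \wedge *P)$. Here I would exploit the symmetry of the Levi-Civita jet connection to rewrite $\nabla_Z P = \nabla_P Z + [Z,P]$; the commutator terms drop out of the integral under the stationarity assumption by exactly the argument that leads from equation (\ref{variational_jet_geodesic_1}) to (\ref{variational_jet_geodesic_2}). What remains, after one integration by parts along the trajectory, is an expression of the form $\int K\bigl(Z \wedge * \nabla_X (\text{something})\bigr)$ with the "something" precisely the combination singled out in the proposition.

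To identify that combination with $P_1 = m_1 X + qA\cdot I$, I would apply Proposition \ref{gauge_covariant_deriv} and Lemma \ref{wedge_to_ad}: the variation of the $g(X\otimes I, A)$ cross term, taken together with the variation of $g(A,A)$, reorganizes into the wedge-product action of the connection on the 1-form $qA\cdot I$, while the variation of $\tfrac{1}{2} m_1 g(X,X)$ produces the usual parallel-transport contribution $\nabla_X(m_1 X)$. Arbitrariness of the variation field $Z$ then forces $\nabla_X P_1 = 0$, as claimed.

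The main obstacle I anticipate is bookkeeping the cross terms correctly: one must track which pieces of $\delta K(g(P,P))$ contribute to the $m_1 X$ transport and which assemble into the gauge-potential transport $qA\cdot I$, verifying in particular that the factor of $\tfrac{1}{2}$ in front of $g(A,A)$ is exactly what is needed to promote the naive wedge into the covariant derivative via Proposition \ref{gauge_covariant_deriv}. A secondary subtlety is justifying that the weak-field limit permits one to drop the cross-coupling between the spatial $\nabla^1$ piece and the gauge $A\wedge$ piece of $\nabla$ inside the variation, so that the Leibniz manipulation used in Theorem \ref{jet_geodesic_from_action} still applies verbatim once $\nabla$ is decomposed as in the remark following Proposition \ref{gauge_covariant_deriv}.
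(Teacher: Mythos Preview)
Your overall strategy---variational argument modeled on Theorem~\ref{jet_geodesic_from_action}, exploiting the torsion-free identity $\nabla_X Z = \nabla_Z X + [X,Z]$---is the same as the paper's. Where you diverge is in the execution: you propose to invoke Proposition~\ref{gauge_covariant_deriv} and Lemma~\ref{wedge_to_ad} to ``reorganize'' the cross terms into the gauge-covariant form. The paper does not do this, and does not need to. Its proof is entirely direct: expand $g(X\otimes I + Z\otimes I + A,\,X\otimes I + Z\otimes I + A)$ to first order in $Z$, obtaining $2g(Z\otimes I,\,X\otimes I + A)$; differentiate along $X$ under the integral; apply Leibniz and the torsion identity to both $\nabla_X Z$ and $\nabla_X A$; the surviving piece is $\langle Z\otimes I,\, m\nabla_X X\otimes I + q\nabla_X A\rangle$, and the Killing contraction with $I$ produces $m_1 = m\,I\cdot I$ and $q\,\nabla_X A\cdot I$ immediately. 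No wedge-to-adjoint reorganization is required, and the ``main obstacle'' you anticipate (tracking the factor of $\tfrac{1}{2}$ in $g(A,A)$ to promote a wedge into a covariant derivative) simply does not arise.

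The machinery you reach for---Proposition~\ref{gauge_covariant_deriv} and Lemma~\ref{wedge_to_ad}---concerns the \emph{spatial} gauge-covariant derivative $\nabla = \nabla^1 + A\wedge$ acting on Lie-algebra-valued forms, not the covariant derivative along the worldline parameter. That decomposition is exactly what the paper uses in the \emph{next} result, Proposition~\ref{non_abelian_equ_of_motion}, to rewrite $\nabla_X P_1 = 0$ as $m_1\nabla_{X_1}^1 X_1 = q(F\cdot I)X_1$. You have, in effect, folded the content of that subsequent proposition into the present one. Your route would likely still close, but it is longer than necessary and blurs the logical separation between establishing $\nabla_X P_1 = 0$ (this proposition) and interpreting it as a Lorentz-type force law (the next).
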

\begin{proof}
	The proof follows along lines similar to what has already been done above in theorem \ref{jet_geodesic_from_action}. If $Z$ is a variation field, we have
	\begin{align}
		g(X \otimes I +Z \otimes I + A,X \otimes I + Z \otimes I + A) - g(X \otimes I + A, X \otimes I + A) = 2 g(Z \otimes I, X \otimes I + A) + o(Z).
	\end{align}
	From the condition of stationarity,
	\begin{align}
		0 & = X \int K \langle q Z \otimes I, q X \otimes I + q A \rangle \nonumber \\ 
		&= \int K \bigg( \langle q \nabla_X Z \otimes I, q X \otimes I + q A \rangle + \langle q Z \otimes I, q \nabla_X X \otimes I + q \nabla_X A \rangle \bigg).
	\end{align}
	But due to the vanishing of torsion we may substitute
	\begin{align}
		\nabla_X Z = \nabla_Z X + X Z - Z X \nonumber \\
		\nabla_X A = \nabla_Z A + A Z - Z A.
	\end{align}
	As before, this leads to
	\begin{equation}
		\int K \bigg(\langle X Z \otimes I - Z X \otimes I - A Z \otimes I + Z \otimes I A, m X \otimes I + q_1 A \rangle + \langle m Z \otimes I, m \nabla_X X \otimes I + q_1 \nabla_X A \rangle \bigg) = 0,
	\end{equation}
	whence
	\begin{equation}
	  m \nabla_X X I \cdot I + q \nabla_X A \cdot I = m_1 \nabla_X X + q \nabla_X A \cdot I = 0,
	\end{equation}
	as was to be shown. Here, we employ the rescaled mass $m_1$ in the expression for momentum.
\end{proof}
Therefore, the momentum current relevant to the motion of a macroscopic body results from a sum over the gauge potentials, each entering in proportion to its charge. From the empirical point of view, the only requirement is that we recover the Lorentz force law of classical electrodynamics, since no macroscopic classical bodies carrying non-zero non-abelian charges have ever been observed.

\begin{proposition}\label{non_abelian_equ_of_motion}
	The equation of motion for a charged body $\nabla_X P_1=0$ reduces to a sum of a gravitational force plus the Yang-Mills version of the Lorentz force law, viz., $m_1 \nabla_X X = q ( F \cdot I ) X$, where $F$ is the field-strength tensor given by equation (\ref{non_abelian_cartan}).
\end{proposition}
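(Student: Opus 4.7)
The plan is to mirror the derivation given in the earlier corollary (for the abelian/Lorentz case) but systematically track the non-abelian commutator pieces that survive in the gauge algebra. Starting from the equation of parallel transport $\nabla_X P_1 = 0$ established in proposition \ref{derivation_of_non_abelian_equ_of_motion}, I would substitute the explicit form $P_1 = m_1 X + q A \cdot I$ and use linearity of $\nabla_X$ to split
\begin{equation*}
m_1 \nabla_X X + q \, \nabla_X (A \cdot I) = 0.
\end{equation*}
The charge coefficient $I$ lies in the Cartan subalgebra and is constant along the trajectory (it is a conserved attribute of the body), so it passes through the connection and one is reduced to computing $\nabla_X A$ with $A$ viewed as a Lie-algebra-valued 1-form.

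The heart of the argument is then to invoke proposition \ref{gauge_covariant_deriv}, which gives $\nabla = \nabla^1 + A \wedge$ on such 1-forms, and subsequently lemma \ref{wedge_to_ad}, which replaces the wedge by $\tfrac{1}{2}\mathrm{ad}_{A\wedge}$. Applied to $A$ itself and contracted with $X$, this produces a term of the form $\nabla^1_X A + \tfrac{1}{2}[A(X), A]$. Next I would appeal to the Cartan structural equation (equation \ref{non_abelian_cartan}), which expresses the field strength as $F = dA + \tfrac{1}{2}[A,A]$, and rearrange using the torsion-free condition exactly as was done in the corollary of {\S}\ref{unification_of_gr_and_em} to extract the antisymmetric combination $\partial_\mu A_\nu - \partial_\nu A_\mu$ along with the commutator piece. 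Contracting with $X$ then identifies the combination as $i_X F$, giving $\nabla_X A = -F(X,\cdot) + (\text{total derivative piece})$ up to sign.

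The total-derivative piece is absorbed into the normalization used to write momentum as a canonical current (this is the same mechanism by which $d(i_X A)$ is eliminated in the abelian derivation once one passes to momentum rather than velocity). What remains after pairing with the charge $I$ under the Killing form is precisely $q(F \cdot I)X$. Substituting back into the expanded equation of motion yields
\begin{equation*}
m_1 \nabla_X X = q \, (F \cdot I)\, X,
\end{equation*}
which is the Yang-Mills generalization of the Lorentz force law, together with the gravitational contribution inherent in the covariant derivative $\nabla_X X$ on the left-hand side.

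The principal obstacle, as in the abelian case, will be bookkeeping: matching the prefactors (the factors of $\tfrac{1}{2}$ coming from lemma \ref{wedge_to_ad} and from the symmetry properties of the Levi-Civita jet connection) and ensuring that the non-symmetric derivative terms in $\nabla_X A$ assemble correctly into $F$ rather than merely $dA$, so that the non-abelian self-coupling $[A,A]$ is retained intact. A secondary subtlety is that the Killing inner product must be taken with the Minkowskian factor $\hat{\eta}$ as emphasized in the discussion preceding equation (\ref{killing_form}), so that the charge projection $F \cdot I$ comes out with the physically correct signature.
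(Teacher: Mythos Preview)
Your overall strategy is sound, but you make the computation harder than necessary by contracting with $X$ at the outset. The paper instead applies the \emph{total} covariant derivative $\nabla = \text{\th} + A\wedge$ (from proposition~\ref{gauge_covariant_deriv}) to $P_1$ as a Lie-algebra-valued 1-form before any contraction:
\[
\nabla P_1 = (\text{\th} + A\wedge)(m_1 X_1 + qA\cdot I) = m_1\nabla^1 X_1 + q(\text{\th}A + A\wedge A)\cdot I = m_1\nabla^1 X_1 + qF\cdot I,
\]
so that Cartan's structural equation $F = \text{\th}A + A\wedge A$ appears verbatim with no rearrangement. Only then does one contract with $X_1$, picking up a single sign from the antisymmetry of $F$ in its spatial indices. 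There is no ``total-derivative piece'' to absorb, no torsion argument to repeat, and the factor-of-$\tfrac12$ bookkeeping you anticipate never arises because lemma~\ref{wedge_to_ad} is not invoked at all in this step.

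By contracting first, you are forced to relate $\nabla_X A$ to $i_X F$, which drags in the term $d(i_X A)$ that you then have to dispose of by a vague appeal to ``normalization.'' That step is not obviously justified and is where your argument is weakest. The paper's order of operations---differentiate fully, then contract---sidesteps the difficulty entirely.
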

\begin{proof}
	Substitute the momentum $P_1$ into the result of proposition \ref{gauge_covariant_deriv}, restricting to the 1-jet part of the current:
	\begin{align}
		\nabla P_1 &= \nabla \left( m_1 X_1 + q A \cdot I \right) \nonumber \\
		&= \left( \text{\th} + A \wedge \right) \left( m_1 X_1 + q A \cdot I \right) \nonumber \\
		&= m_1 \left( \text{\th} + A \wedge \right) X_1 + q \text{\th} A \cdot I + q A \wedge A \cdot I \nonumber \\
		&= m_1 \nabla^1 X_1 + q F \cdot I.
	\end{align}
	Therefore, $m_1 \nabla^1 X_1 = -q F \cdot I$. If we now contract the total covariant derivative with the 1-vector field $X_1$ we find
	\begin{equation}
		m_1 \nabla^1_{X_1} X_1 = -q X_1 F \cdot I = q ( F \cdot I) X_1,
	\end{equation}
	where in the intermediate expression $X_1$ contracts against the first spatial index of the field-strength 2-form while in the final result it contracts against the second (thereby introducing a relative minus sign due to the antisymmetry of $F$).
\end{proof}

\subsubsection{Derivation of the Yang-Mills Field Equations}\label{yang_mills_derivation}

As we now show, the Einstein-Hilbert action of {\S}II.2.2 along with our postulated interaction term lead directly to field equations of Yang-Mills type when the higher jet degrees of freedom are interpreted as gauge fields. In {\S}\ref{equation_of_motion}, we change the notion for the affine jet connection from $\omega^\alpha_\beta$ to $A^\alpha_\beta$ in order to emphasize that we wish to regard the latter as the vector potential of a gauge field. Accordingly also denote the curvature 2-form by $F^\mu_\nu = \Omega^\mu_\nu$. Thus Cartan's second structural equation reads,
\begin{equation}\label{curvature_gtr}
	F^\mu_\nu = \text{\th} A^\mu_\nu + A^\mu_\lambda \wedge A^\lambda_\nu,
\end{equation}
or in a compressed notation,
\begin{equation}\label{non_abelian_cartan}
	F = \text{\TH} A := \text{\th} A + A \wedge A.
\end{equation}
Equation (\ref{non_abelian_cartan}) looks familiar: it resembles the going expression for the field strength tensor in Yang-Mills gauge theory, cf. Thirring \cite{thirring_vol_2}. Here, the difference consists in the fact that, for us, equation (\ref{non_abelian_cartan}) involves space-time currents valued in 2-forms whereas in Yang-Mills gauge theory one could, in principle, contemplate any simple gauge group or direct product of simple gauge groups (as happens in the standard model). 

\begin{proposition}\label{cov_deriv_square} 
	The covariant exterior derivative satisfies the relation, $\text{\TH}^2 = \Omega \wedge$.
\end{proposition}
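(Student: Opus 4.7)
The plan is to compute $\text{\TH}^2\phi$ directly by applying the definition $\text{\TH}=\text{\th}+A\wedge$ from proposition \ref{gauge_covariant_deriv} twice to an arbitrary Lie-algebra-valued differential form $\phi$, and then to collect terms using (i) $\text{\th}^2=0$, (ii) the graded Leibniz rule for $\text{\th}$ acting on the wedge of a 1-form with $\phi$, and (iii) Cartan's second structural equation (\ref{non_abelian_cartan}), which identifies $\text{\th} A + A\wedge A$ with the curvature $F=\Omega$.

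Concretely, the first step is to expand
\begin{equation*}
\text{\TH}^{2}\phi \;=\; (\text{\th}+A\wedge)(\text{\th}\phi + A\wedge\phi) \;=\; \text{\th}^{2}\phi \;+\; \text{\th}(A\wedge\phi) \;+\; A\wedge \text{\th}\phi \;+\; A\wedge A\wedge\phi.
\end{equation*}
The first term vanishes because the ordinary exterior derivative squares to zero. Applying the graded Leibniz rule to the second term (recalling that $A$ is a 1-form so that one picks up a sign $(-1)^{1}$ when $\text{\th}$ passes across $A$) produces $(\text{\th} A)\wedge\phi - A\wedge\text{\th}\phi$. The $A\wedge\text{\th}\phi$ contributions then cancel, leaving
\begin{equation*}
\text{\TH}^{2}\phi \;=\; \bigl(\text{\th} A + A\wedge A\bigr)\wedge\phi \;=\; F\wedge\phi \;=\; \Omega\wedge\phi,
\end{equation*}
which is the claimed identity.

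The main subtlety to verify carefully — rather than a genuine obstacle — is the bookkeeping of signs and index orderings when $A$ and $\phi$ are simultaneously differential forms on space-time and matrix-valued in the gauge (jet) indices. One must check that the graded signs coming from the differential-form part commute consistently with the non-commutative matrix multiplication in the gauge indices, so that the term $A\wedge A\wedge\phi$ produced by the second application of $\text{\TH}$ really does match the $A\wedge A$ in Cartan's equation; this is precisely the content already secured by lemma \ref{wedge_to_ad}, whose identity $X^{\alpha}_{\gamma}\wedge Y^{\gamma}_{\beta}=\tfrac{1}{2}\mathrm{ad}_{X\wedge}Y$ one may invoke to rewrite the quadratic term if desired. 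Finally, one should remark that because lemma \ref{wedge_to_ad} extends linearly from 1-forms to $k$-forms, the calculation above is valid for $\phi$ of arbitrary form-degree, so $\text{\TH}^{2}=\Omega\wedge$ holds on the full complex of Lie-algebra-valued forms.
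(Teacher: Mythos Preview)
Your proof is correct and follows essentially the same approach as the paper: both expand $\text{\TH}^2$ directly, invoke $\text{\th}^2=0$, apply the graded Leibniz rule to $\text{\th}(A\wedge\phi)$ so that the $A\wedge\text{\th}\phi$ cross-terms cancel, and then identify $\text{\th} A + A\wedge A$ with the curvature via Cartan's second structural equation. Your handling of the sign in the Leibniz step is in fact a bit cleaner than the paper's parity case-split, but the underlying argument is the same.
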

\begin{proof}
	Let $\eta$ be a differential form to which we apply $\text{\TH}$ twice:
	\begin{align}
		\text{\TH}^2 \eta &= \left( \text{\th} + A \wedge \right) \left( \text{\th} + A \wedge \right) \eta \nonumber \\ &= \text{\th}^2 \eta + \left( A \wedge \text{\th} + \text{\th} \wedge A \wedge \right) \eta + A \wedge A \wedge \eta.
	\end{align}
	Now, we want $(\text{\th}A + A \wedge A ) \wedge \eta$ on the right hand side. In order to see why one ends up with this, write
	\begin{equation}
		\text{\th} \left( A \wedge \eta \right) = \left( \text{\th} A \right) \wedge \eta - A \wedge \text{\th} \eta + \cdots;
	\end{equation}
	but the cross-terms cancel when $A$ is a pure 1-form. If $\eta$ is pure odd, this happens because $A \wedge \eta = - \eta \wedge A$. If $\eta$ is pure even, the cross-terms cancel for a different reason, namely, that they involve $(-1)^k$ where $k=1$ on the left hand side and $k$ is even on the right hand side. Therefore we have that $\text{\TH}^2 \eta = \Omega \wedge \eta = F \wedge \eta$ (under the notational convention here in force).
\end{proof}

\begin{corollary}\label{homogeneous_equ}
	In the indicated approximation, non-abelian gauge fields obey the homogeneous Maxwell's relation $\text{\TH} F = 0$.
\end{corollary}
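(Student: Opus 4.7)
The plan is to apply $\text{\TH}$ directly to the defining relation $F = \text{\TH} A$ from equation (\ref{non_abelian_cartan}) and invoke the proposition just proved. Writing $\text{\TH} F = \text{\TH}(\text{\TH} A) = \text{\TH}^2 A$, proposition \ref{cov_deriv_square} immediately gives $\text{\TH}^2 A = \Omega \wedge A = F \wedge A$. All that remains is to argue that this residual wedge term is negligible in the approximation scheme set out in section \ref{from_jets_to_gauge}.

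To that end, I would expand via Cartan's structural equation,
\begin{equation}
F \wedge A = (\text{\th} A) \wedge A + A \wedge A \wedge A.
\end{equation}
The first summand is quadratic in $A$ and carries an additional spatial derivative, while the second is cubic in $A$. Both of these are precisely the classes of terms that the text explicitly instructs us to drop (see the paragraph and footnote preceding equation (\ref{decomp_connection_1_form}), where the gauge potential is estimated as order $10^{-20}$ in geometric units and spatial derivatives as smaller still by the factor $\ell_P/c\tau \approx 10^{-19}$). Hence $F \wedge A$ is of strictly lower order than the terms retained and may be set to zero.

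The main conceptual obstacle, and the only point where I expect some care to be needed, lies in verifying that it is self-consistent to discard $F \wedge A$ while retaining the non-abelian self-coupling $A \wedge A$ inside $F$ itself. The resolution is that the approximation's cutoff is drawn precisely between these two cases: $A \wedge A$ is quadratic in $A$ with no derivative and must be kept in order to preserve the non-abelian character of the theory, whereas $F \wedge A$, once expanded, unavoidably picks up either a third factor of $A$ or an extra spatial derivative on top of the quadratic self-coupling, and hence falls below the retained order. With this observation in hand the corollary follows by a single application of proposition \ref{cov_deriv_square} together with the approximation hypothesis; no further computation is required.
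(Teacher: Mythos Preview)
Your proof is correct and follows essentially the same route as the paper: both compute $\text{\TH} F = \text{\TH}^2 A$, expand, and discard the terms that are cubic in $A$ or quadratic in $A$ with an extra derivative. The only cosmetic difference is that the paper re-expands $(\text{\th}+A\wedge)(\text{\th}+A\wedge)A$ by hand, whereas you invoke proposition \ref{cov_deriv_square} to jump straight to $F\wedge A$ before expanding; your added remark on why it is consistent to retain $A\wedge A$ inside $F$ while discarding $F\wedge A$ is a clarification the paper does not make explicit.
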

\begin{proof}
	From the expression for $\text{\TH} F = \text{\TH}^2 A$ we have
	\begin{align}
		\text{\TH} F &= \left( \text{\th} + A \wedge \right)  \left( \text{\th} + A \wedge \right) A \nonumber \\
		&= \text{\th}^2 A + A \wedge \text{\th} A + \text{\th} A \wedge A + A \wedge A \wedge A,
	\end{align}and we see that the additional terms beyond $\text{\th}^2 A$ are of third order in $A$ together with derivatives which are negligible to the degree of approximation with which we are working. But $\text{\th}^2=0$ identically; hence, effectively $\text{\TH}F=0$.
\end{proof}

\begin{lemma}
	The perturbation of the field strength $F_A$ depending on the vector potential $A$ when $A$ goes to $A + t a$, where $t$ is a small real parameter, may be expressed by the following formula:
	\begin{equation}
		F_{A+ta} = F_A + t \text{\TH}_A a + \frac{1}{2} t^2 \mathrm{ad}_a a.
	\end{equation}
\end{lemma}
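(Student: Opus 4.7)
The plan is a direct calculation: substitute $A + ta$ into the Cartan structural formula (\ref{non_abelian_cartan}) and collect powers of $t$. I would begin by writing
\begin{equation*}
F_{A+ta} = \text{\th}(A+ta) + (A+ta) \wedge (A+ta),
\end{equation*}
so that the quadratic summand expands as $A \wedge A + t(A \wedge a + a \wedge A) + t^{2}\, a \wedge a$, while the derivative summand contributes $\text{\th}A + t\,\text{\th}a$. The $t^{0}$ part reproduces $F_{A} = \text{\th}A + A \wedge A$ identically; the $t^{3}$ and higher parts are absent because the structural formula is only quadratic in the connection.

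Next I would identify the linear coefficient $\text{\th}a + A \wedge a + a \wedge A$ with $\text{\TH}_{A}a$. Proposition \ref{gauge_covariant_deriv} established that on Lie-algebra-valued 1-forms the total covariant exterior derivative acts as $\nabla^{1} + A\wedge$, the right-hand wedge being the graded commutator once both orderings are symmetrized as in the derivation of equation (\ref{current_cov_deriv}). Thus applying $\text{\TH}_{A}$ to the 1-form $a$ generates precisely the combination $\text{\th}a + A \wedge a + a \wedge A$, matching the linear coefficient extracted above; this is the content of the $t$-term in the claimed formula.

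For the quadratic term, I would invoke lemma \ref{wedge_to_ad} once more, this time with $X = Y = a$: it yields $a\wedge a = \tfrac{1}{2}\,\mathrm{ad}_{a\wedge} a$, so that $t^{2} a\wedge a = \tfrac{1}{2}\,t^{2}\,\mathrm{ad}_{a}a$, as required. The main obstacle I anticipate is the careful bookkeeping of graded commutators: in particular, verifying that $A\wedge a + a \wedge A$ is genuinely the correct first-order variation (rather than something that would appear to antisymmetrize to zero under a naive ordinary-matrix reading), and that the factor of $\tfrac{1}{2}$ sitting in front of $\mathrm{ad}_{a}a$ is consistent with the factor conventions used earlier in proposition \ref{gauge_covariant_deriv} and lemma \ref{wedge_to_ad}. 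Once the sign and factor bookkeeping from those two results is propagated uniformly through the linear and quadratic coefficients, the identification is automatic.
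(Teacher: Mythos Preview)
Your proposal is correct and follows essentially the same route as the paper: a direct expansion of the structural formula $F_{A+ta}=\text{\th}(A+ta)+(A+ta)\wedge(A+ta)$ followed by identification of the linear and quadratic coefficients using lemma~\ref{wedge_to_ad}. The only cosmetic difference is that the paper invokes lemma~\ref{wedge_to_ad} at the outset to write $A\wedge A=\tfrac{1}{2}\,\mathrm{ad}_A A$ and then expands bilinearly in $\mathrm{ad}$ notation, whereas you keep the wedge notation through the expansion and convert at the end; the bookkeeping concern you flag about the factor of $\tfrac{1}{2}$ is exactly the point where the two presentations meet.
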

\begin{proof}
	This corresponds to a standard result. In view of lemma \ref{wedge_to_ad}, we may write out,
	\begin{align}
		F_{A+ta} &= \text{\th} (A+ta) + \frac{1}{2} \mathrm{ad}_{A+ta} (A+ta) \nonumber \\
		&= \text{\th} A + t \text{\th} a + \frac{1}{2} \mathrm{ad}_A A + \frac{1}{2} t \mathrm{ad}_a A + \frac{1}{2} t \mathrm{ad}_A a + \frac{1}{2} t^2 \mathrm{ad}_a a \nonumber \\
		&= F_A + t \left( \text{\th} + \mathrm{ad}_A \right) a + \frac{1}{2} t^2 \mathrm{ad}_a a \nonumber \\
		&= F_A + t \text{\TH}_A a + \frac{1}{2} t^2 \mathrm{ad}_a a.
	\end{align}
	This completes the proof.
\end{proof}

\begin{theorem}\label{inhomogeneous_equ}
	In the indicated approximation again, non-abelian gauge fields obey as well Maxwell's inhomogeneous relation $-\text{\TH}^*_{-A} * F = - 2 \pi \varkappa \lambda^4 B^2 * J_1$, where $\text{\TH}^*$ indicates the adjoint operator corresponding to $\text{\TH}$. Note: the $J_1$ on the right hand side represents an ordinary 1-form quantifying the current as source of the field, as is usual in classical electrodynamics.
\end{theorem}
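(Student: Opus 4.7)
The plan is to obtain the inhomogeneous field equation as the Euler--Lagrange equation that results from varying the total action (Einstein--Hilbert piece plus the matter-coupling term $\mathscr{L}_{\mathrm{int}}$ of equation (\ref{gauge_matter_coupling})) with respect to the gauge potential $A$. Concretely, I would begin from $\mathscr{L}_{\mathrm{EH}} \sim K(F \wedge * F)$ written in terms of the Lie-algebra valued curvature 2-form $F = \text{\TH}A$, so that varying $A \mapsto A + ta$ affects the action through both the curvature term and the interaction with the current.

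First, apply the perturbation lemma just proved: $F_{A+ta} = F_A + t\,\text{\TH}_A a + \tfrac{1}{2} t^2\,\mathrm{ad}_a a$. To the order relevant in our weak-field, slowly varying approximation (where $A$ is of order $10^{-20}$ and terms cubic in $A$ or quadratic in $A$ with an extra derivative are discarded, as justified in the footnote of {\S}\ref{from_jets_to_gauge}), the first-order variation of the kinetic piece reads
\begin{equation}
\delta \int K(F \wedge * F) = 2 \int K\bigl(\text{\TH}_A a \wedge * F\bigr).
\end{equation}
Integration by parts against the Killing form, together with the definition of the formal adjoint $\text{\TH}^*$, transfers $\text{\TH}_A$ onto $* F$. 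The sign flip on $A$ in the subscript $-A$ arises because, when passing $\mathrm{ad}_A \wedge$ from one factor to the other under the Killing pairing, the antisymmetry of $\mathrm{ad}$ with respect to $K$ converts $\mathrm{ad}_A$ into $-\mathrm{ad}_A$, so the resulting operator is $\text{\TH}^*_{-A} = \text{\th}^* - \mathrm{ad}_A^*$ acting on $* F$.

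Next, vary the interaction term. Equation (\ref{gauge_matter_coupling}) couples $A$ linearly into the velocity current via $(X_1+X_2)\otimes \mathrm{id} + \mathrm{ad}_A$, paired against $* P$. Isolating the terms linear in the variation $a$ picks out exactly the ordinary 1-current $J_1$ that sources the field (the 2-jet piece of $X$ being what gets converted into the current by proposition \ref{derivation_of_non_abelian_equ_of_motion}). The overall constants $-2\pi\varkappa\lambda^4 B^2$ are then just the ratio of the prefactor of the Einstein--Hilbert curvature term (with its $B^2$ from the generalized curvature 2-form in Part II) to the prefactor of the matter coupling, together with the sign convention fixed in the footnote after equation (\ref{momentum_current_density}) so that the timelike components of the source are positive.

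Setting $\delta A = a$ arbitrary and equating the two contributions gives the claimed equation
\begin{equation}
-\text{\TH}^*_{-A} * F = -2\pi\varkappa\lambda^4 B^2 * J_1.
\end{equation}
The main obstacle I expect is bookkeeping of the signs and numerical prefactors: checking that $\text{\TH}^*_{-A}$ (not $\text{\TH}^*_A$) is the correct adjoint when the Killing form is written with the Minkowskian factor $\hat{\eta}$ as emphasized around equation (\ref{killing_form}), and confirming that only the 1-jet part of the current survives on the right-hand side so as to reproduce the classical form of Maxwell's inhomogeneous relation. The routine part — namely that the highest-derivative contribution reduces to $\text{\th}^* * F$, reproducing $\partial_\mu F^{\mu\nu} \sim J^\nu$ in the abelian limit — follows immediately once the integration by parts is carried out and the approximations of {\S}\ref{from_jets_to_gauge} are invoked to discard cubic self-interaction and derivative-quadratic terms.
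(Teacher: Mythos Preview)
Your approach is correct and matches the paper's: vary the action with respect to $A$, use the perturbation formula for $F$ to get $\delta(F\wedge *F)=2\,\text{\TH}_A a\wedge *F$, integrate by parts to produce $-\text{\TH}^*_{-A}*F$, vary $\mathscr{L}_{\mathrm{int}}$ to extract $*J_1$, and equate.

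One structural point you glossed over: when you write $\mathscr{L}_{\mathrm{EH}}\sim K(F\wedge *F)$ you are tacitly assuming that the other pieces of the full Einstein--Hilbert Lagrangian contribute nothing under variation of the connection. The paper makes this explicit by working in a Palatini-type formulation---treating $e^\alpha$ and $\omega^\alpha_\beta$ as independent, with an undetermined multiplier $f$ enforcing Cartan's first structural equation $\text{\th}e^\alpha=e^\beta\wedge\omega^\alpha_\beta$---and then observing that (i) the multiplier term vanishes once the constraint is imposed, and (ii) the scalar-curvature cross-term $\delta R_{\alpha\beta}\wedge *e^{\alpha\beta}$ drops out by lemma II.2.1. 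Only after these eliminations does the variation reduce to the $F\wedge *F$ computation you sketch. This is not a gap in your argument so much as a missing justification for isolating the $F\wedge *F$ term in the first place; once granted, your integration-by-parts and sign analysis for $\text{\TH}^*_{-A}$ is exactly what the paper does.
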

\begin{proof}
	Here, we wish to regard the Einstein-Hilbert action as a functional of $e^\alpha$ and $\omega^\alpha_\beta$ as independent of each other and to secure their proper relationship via an undetermined multiplier $f$. Thus (cf. equation (II.2.25)),
	\begin{align}
		\mathscr{L}_{EH} = &\frac{1}{16 \pi \varkappa} \left( R_{\alpha\beta} + \lambda^2 B^{|\alpha|+|\beta|-2} e_\alpha \wedge e_\beta \right) \wedge * 
		\left(  R^{\alpha\beta} + \lambda^2 B^{|\alpha|+|\beta|-2} e^\alpha \wedge e^\beta \right) + \nonumber \\
		& f \left( \text{\th} e^\alpha - e^\beta \wedge \omega^\alpha_\beta \right) \wedge * \left(  e_\alpha - e_\beta \wedge \omega_\alpha^\beta \right).
	\end{align}	
	Variation with respect to $e^\alpha$ and $\omega^\alpha_\beta$ separately yields
	(cf. equation (II.2.47))
	\begin{align}
		\delta e^\alpha &\wedge \left[ 2 t_\alpha +
		\frac{1}{8 \pi \varkappa} * e_{\alpha\beta\gamma} \wedge R^{\beta\gamma} +
		\frac{1}{16 \pi \varkappa} \lambda^2 * e_\alpha - 2 \text{\th}^* f * \left(
		\text{\th} e_\alpha - e_\beta \wedge \omega_\alpha^\beta
		\right) \right] = 0 \\
		&2 \delta F \wedge * F + \frac{1}{8 \pi \varkappa} \delta R_{\alpha\beta} \wedge * e^{\alpha\beta} + \delta \mathscr{L}_{\mathrm{int}} - 2 f e^\alpha \wedge *  \left(
		\text{\th} e_\alpha - e_\beta \wedge \omega_\alpha^\beta 
		\right) = 0.
	\end{align}	
	The undetermined multiplier can be eliminated on both lines by imposing the condition that
	\begin{equation}
		\text{\th} e^\alpha - e^\beta \wedge \omega^\alpha_\beta = 0
	\end{equation}
	identically. Then stationarity with respect to the variation $\delta e^\alpha$ produces the gravitational field equations as before (in theorem II.2.1 above), while stationarity with respect to the varation $\delta \omega$ produces another set of field equations we have now to derive. Notice that, as before, the second term on the second line drops out again in view of lemma II.2.1. Next evaluate the variation of the curvature term with respect to $A$:	
	\begin{align}
		\delta \left( F \wedge *F \right) &= 2 \delta F \wedge *F \nonumber \\
		&= 2 \left( \text{\th} \delta A + \delta A \wedge A + A \wedge \delta A \right) \wedge *F \nonumber \\
		&= 2 \delta A \wedge \left( - \text{\th}^* + 2 A \wedge \right) *F \nonumber \\
		&= 2 \delta A \wedge \left( - \text{\TH}^*_{-A} *F \right),
	\end{align}	
	since when we integrate by parts we have that under the integral sign,
	\begin{align}
		\left( \text{\th} \delta A \right) \wedge *F &= d^\alpha \wedge \partial_\alpha \delta A \wedge *F \nonumber \\
		&= - \partial_\alpha \delta A \wedge d^\alpha \wedge *F \nonumber \\
		&= - (-1)^{|\alpha|} \delta A \wedge d^\alpha \partial_\alpha \wedge *F \nonumber \\
		&= -\delta A \wedge \text{\th}^* *F.
	\end{align}
	As for the variation of the interaction term, we have from
	\begin{align}
		\mathscr{L}_{\mathrm{int}} 
		=& - \frac{1}{2} ~\mathrm{tr}~ \left( (X_1 + X_2) \otimes \mathrm{id} + (X_1+X_2) \otimes I +  \mathrm{ad}_A \right) \wedge \nonumber \\
		&* \frac{1}{4}\lambda^4 \left( m (X_1 + X_2) \otimes \mathrm{id} + \frac{1}{2} (X_1+X_2) \otimes q_1 I + \mathrm{ad}_{\frac{1}{2} q_1 A} \right)
	\end{align}
	that
	\begin{align}
		\delta \mathscr{L}_{\mathrm{int}} &= - \frac{1}{2} ~\mathrm{tr}~
		\left(
		\mathrm{ad}_{\delta A} \wedge * P + \left( (X_1+X_2) \otimes \mathrm{id} + (X_1+X_2) \otimes I \right) \wedge * ~\mathrm{ad}_{\frac{1}{2} \lambda^2 q_1 \delta A} \right) \nonumber \\
		&= - \frac{1}{2} ~\mathrm{tr}~ \delta A \wedge * \left( P + \frac{1}{2} \lambda^2 m (X_1+X_2) \otimes \mathrm{id} + \frac{1}{2} (X_1+X_2) \otimes \lambda^2 q_1 I \right) \nonumber \\
		&= - \frac{1}{2} ~\mathrm{tr}~ \delta A \wedge * \left( m (X_1+X_2) \otimes \mathrm{id} + (X_1+X_2) \otimes q_1 I + \mathrm{ad}_{\frac{1}{2} q_1 A} \right) \nonumber \\
		&= - \mathrm{tr}~ \delta A \wedge * \frac{1}{4} \lambda^4 J_1
	\end{align}
	In the last line, the term proportional to the mass can be ignored because it lies on the diagonal, but $\delta A$ does not have any variation there. Therefore for the total variation we find
	\begin{equation}
		\mathrm{tr}~ \delta A \wedge \left[\frac{2}{16\pi\varkappa} \left( - \text{\TH}^*_{-A} *F \right) + * \frac{1}{4} \lambda^4 J_1 \right] = 0.
	\end{equation}
	Equating the term inside the brackets to zero yields
	\begin{equation}\label{final_field_equ}
		- \text{\TH}^*_{-A} *F = - 2 \pi \varkappa \lambda^4 B^2 * J_1,
	\end{equation}
	which is what was to be shown.
\end{proof}
\begin{remark}
	In the last line, we pull out a factor of $\frac{1}{4} \lambda^4 B^2$ in front. This implies that the momentum current that appears on the right hand side of the field equation becomes,
	\begin{equation}\label{effective_momentum_current}
		J_1 = q_1 (X_1+X_2) \otimes I + \mathrm{ad}_{q_1 A}.
	\end{equation}
	Rewrite the coefficient of the velocity and the gauge potential as $q_1 = \sigma_1 m_1 = \sigma_0 m_0/\frac{1}{2} \lambda^2$, where $m$ denotes the rest mass of the moving body. For typical macroscopic bodies, the electrical charges of their constituent electrons and protons nearly cancel, leading to a charge-to-mass ratio small compared to unity. If however, for an elementary particle in which no such cancellation occurs, we suppose the charge-to-mass ratio $\sigma_0$ to be of order unity, then the effective charge-to-mass ratio $\sigma_1$ would be of order $1/\frac{1}{2} \lambda^2 \ell_P^2 = 3.4624 \times 10^{121}$. Thus, our proposed form of the dark energy along with the smallness of the cosmological constant would explain why electromagnetic forces are so much stronger than gravity.\footnote{Even though in Planck units the absolute charge-to-mass ratio $\sigma_0$ will still be very small compared to unity. The so-called hierarchy problem comprises two questions, first, as to why the electroweak scale is so small compared to the Planck scale and, second, as to why the cosmological constant is so small in Planck units. Equations (\ref{final_field_equ}) and (\ref{effective_momentum_current}) simplify the situation by suggesting that the two are aspects of the same thing (vide {\S}\ref{spont_symm_breaking}) below).} For this same reason, we shall suppose the cosmological term in equation (\ref{effective_momentum_current}) to be negligible. It is not entirely negligible in the gravitational (i.e., 1-jet) sector of the field equation, where the cosmological constant may be regarded as contributing to the mass density of matter in the universe an additional term going as
	\begin{equation}
		8 \pi \varkappa \varrho_0 = \Lambda = \frac{1}{2} \lambda^2.
	\end{equation} 
	This cosmological background density will play a role in the mechanism of spontaneous symmetry breaking, as we shall see below in {\S}\ref{spont_symm_breaking}. 
\end{remark}

For the remainder of this paper, we shall be interested only in the leading behavior in which spatial differential forms are limited to jets of first order. In this case, 
\begin{equation}
	- \text{\TH}^*_{-A} = - \text{\th}^* + \mathrm{ad}_A = - (-1)^1 \text{\th} + \mathrm{ad}_A = \text{\th} + \mathrm{ad}_A = \text{\TH}_A,
\end{equation}
and, as discussed above, we recover the non-abelian Yang-Mills equation as usually stated for the gauge group $\mathrm{so}(p,q)$ if we set $|\alpha|=|\beta|=2$. In other words, the field equation in the (2,2)-sector becomes just
\begin{equation}\label{yang_mills_field_equ}
	\text{\TH}_A *F = - 2 \pi \varkappa \lambda^4 B^2 * J_1.
\end{equation} 

\begin{remark}
	It is no surprise that a field equation of Yang-Mills type would result from the presence of the $R_{\alpha\beta} \wedge * R^{\alpha\beta}$ term in the Einstein-Hilbert action. What is novel here is that, in the post-Einsteinian general theory of relativity, it arises in a \text{natural} way  as soon as one identifies higher jets with gauge indices. Two consequences flow from this finding: 1) the presence of non-abelian gauge fields in the standard model receives a compelling justification in terms of an intrinsically \textit{spatio-temporal origin}, whereas in the conventional approach these merely must be postulated blankly; 2) the gauge groups that now must appear in the present formalism are \textit{uniquely fixed} in advance by the differentiable manifold structure of Minkowskian space-time! Hence, in stark contrast to all other existing approaches to a unified physics beyond the standard model, post-Einsteinian theory has no adjustable parameters or any landscape problem whatsoever. One simply has to inspect what is implied by its general principles in order to see whether it agrees with nature, or not.
\end{remark}

\begin{remark}
At this juncture, we can subject the theory to a strong test of its consistency. In the case of electromagnetism, the coupling constant on the right hand side of equation (\ref{yang_mills_field_equ}) ought to equal $4 \pi$ in cgs units. Using the empirical value of the cosmological constant and the value for $B$ worked out in {\S}\ref{spont_symm_breaking} below, one indeed obtains the correct value to within a few percent. In view of the fact that we have not included any quantum-field-theoretical corrections, the agreement is surprisingly good. A priori there is no reason to expect any relationship between Newton's gravitational constant and the electroweak symmetry breaking scale. See {\S}\ref{exptl_confirmation} below.
\end{remark}

Before proceeding, we wish to clarify how coupling constants enter into the picture, since there arises below a subtle mathematical point that has major implications for the development of the theory. In elementary particle physics, one introduces coupling constants to describe how gauge forces interact with matter. If we denote our coupling constant by the letter $g$, one would have for the gauge covariant derivative and the field strength, respectively,
\begin{equation}
	\nabla = \text{\th} \otimes \mathrm{id} + g A \wedge = \text{\th} \otimes \mathrm{id} +\frac{1}{2} g ~\mathrm{ad}_{A \wedge}
\end{equation}
and
\begin{equation}
	F^\alpha_\beta = \text{\th} A^\alpha_\beta + \frac{1}{2} g A^\alpha_\gamma \wedge A^\gamma_\beta.
\end{equation}
Therefore, in terms of this conventional definition we would have $g=2$. Note that one can easily pass back and forth between different definitions of the coupling constant by a rescaling. The Yang-Mills action is ordinarily written in the form,
\begin{equation}
	\mathscr{L}_{YM} = \frac{1}{g^2} \int F \wedge * F,
\end{equation}
if the units are taken such that $F = \text{\th} A + A \wedge A = \text{\th} A + \dfrac{1}{2} ~\mathrm{ad}_A A$. Now, let us entertain a rescaling of the vector potential and the field strength of the form,
\begin{equation}\label{rescaled_gauge_field}
	\tilde{A} = \frac{1}{g} A \qquad \mathrm{resp.} \qquad \tilde{F} = \text{\th} \tilde{A} + g \tilde{A} \wedge \tilde{A}.
\end{equation}
It will readily be seen that
\begin{equation}\label{rescaled_YM_lagrangian}
	\mathscr{L}_{YM} = \frac{1}{g^2} \int F \wedge * F = \int \tilde{F} \wedge * \tilde{F}.
\end{equation}
Thus, we could get rid of the troubling factor of two by dividing the action by a factor of four. Keep in mind that the coupling constant also enters into the interaction term, which may be written as
\begin{align}
	\mathscr{L}_{\mathrm{int}} =& - \frac{1}{2} ~\mathrm{tr}~ \left( \frac{1}{2} (X_1 + X_2) \otimes \mathrm{id} + (X_1+X_2) \otimes I + \frac{1}{2} g \mathrm{ad}_A \right) \wedge \nonumber \\
	&* \frac{1}{4} \lambda^4 \left( m (X_1 + X_2) \otimes \mathrm{id} + (X_1+X_2) \otimes q I + \frac{1}{2} g \mathrm{ad}_{q A} \right).
\end{align}
There will be more to say on this topic in the next subsection when we separate out the electromagnetic and weak gauge fields.

\subsubsection{Construction of Electroweak forces from Jets at Second Order}\label{electroweak_construction}

So far, we have justified an field equation of Yang-Mills type for the \text{gravitational} gauge group generated by infinitesimal changes of frame, which as we have seen lie in the Lie algebra $\mathrm{so}(p,q)$. If our theory is to correspond to nature, however, we must explain the route by which these reduce to the observed fundamental forces represented by the standard model. The logical place to start would be with the electromagnetic and weak forces. The discussion above motivates why one might expect these to arise in the $(2,2)$ sector, i.e., at the first non-trivial order in the jets beyond the first (which have already been shown in Part II to correspond to Einstein's general theory of relativity for the gravitational force).

Let us henceforth denote by $A = \omega^{(2,2)}$ the jet affine connection 1-forms $\omega^\alpha_\beta$ restricted to $|\alpha|=|\beta|=2$. Now, the space of jets in 4d space-time is ten dimensional, with signature (3,7). In {\S}\ref{yang_mills_derivation} above, however, it is shown that the space of equivalence classes after identifying as physically equivalent gauge field configurations conjugate to one another under local Lorentz transform is four-dimensional and may be taken to be spanned by $d^{tt}-d^{xx}-d^{yy}-d^{zz}, d^{tx}, d^{ty}, d^{tz}$. Therefore, working for the remainder of this section in the space of equivalence classes, we have that the gauge group at second order in the jets reduces effectively to $\mathrm{so}(1,3)$, if we employ lowered indices ($\omega_{\alpha\beta}$), or to $\mathrm{so}(4)$ if we raise one of the indices ($\omega^\alpha_\beta$).

It proves helpful to look at an explicit realization of
\begin{equation}
	\mathrm{so}(4) = \{ X \in M_4(\vvmathbb{R}): X^t = -X \}.
\end{equation} 
Define six generators as follows:
\begin{align}
	A_1 &= \begin{pmatrix} 0 & 0 & 0 & 0 \cr 0 & 0 & -1 & 0 \cr 0 & 1 & 0 & 0 \cr 0 & 0 & 0 & 0 \cr \end{pmatrix} \qquad
	A_2 = \begin{pmatrix} 0 & 0 & 1 & 0 \cr 0 & 0 & 0 & 0 \cr -1 & 0 & 0 & 0 \cr 0 & 0 & 0 & 0 \cr \end{pmatrix} \qquad
	A_3 = \begin{pmatrix} 0 & -1 & 0 & 0 \cr 1 & 0 & 0 & 0 \cr 0 & 0 & 0 & 0 \cr 0 & 0 & 0 & 0 \cr \end{pmatrix} \qquad \nonumber \\
	B_1 &= \begin{pmatrix} 0 & 0 & 0 & -1 \cr 0 & 0 & 0 & 0 \cr 0 & 0 & 0 & 0 \cr 1 & 0 & 0 & 0 \cr \end{pmatrix} \qquad
	B_2 = \begin{pmatrix} 0 & 0 & 0 & 0 \cr 0 & 0 & 0 & -1 \cr 0 & 0 & 0 & 0 \cr 0 & 1 & 0 & 0 \cr \end{pmatrix} \qquad
	B_3 = \begin{pmatrix} 0 & 0 & 0 & 0 \cr 0 & 0 & 0 & 0 \cr 0 & 0 & 0 & -1 \cr 0 & 0 & 1 & 0 \cr \end{pmatrix}.
\end{align}
One checks the commutation relations
\begin{equation}
	[A_i,A_j] = \epsilon_{ijk} A_k, \qquad [B_i,B_j] = \epsilon_{ijk}A_k, \qquad [A_i,B_j] = \epsilon_{ijk}B_k,
\end{equation}
where $\epsilon_{ijk}$ denotes the totally anti-symmetric tensor. The purpose in selecting this representation of $\mathrm{so}(4)$ among all others is that in these terms one can readily display the isomorphism $\mathrm{so}(4) \cong \mathrm{so}(3) \oplus \mathrm{so}(3)$. Namely, define
\begin{equation}
	X_i = \frac{1}{2} (A_i+B_i), \qquad Y_i = \frac{1}{2} (A_i - B_i), \qquad i=1,2,3,
\end{equation}
and verify the commutation relations,
\begin{equation}\label{so4_commutation_relations}
	[X_i,X_j] = \epsilon_{ijk} X_k, \qquad [Y_i,Y_j] = \epsilon_{ijk}Y_k, \qquad [X_i,Y_j] = 0.
\end{equation}
Since the isomorphism of Lie algebras is also very well known that $\mathrm{so}(3) \cong \mathrm{su}(2)$, we are close to having achieved a representation of the electroweak force, which in the standard model transforms in the gauge algebra $\mathrm{u}(1)_Y \oplus \mathrm{su}(2)$. Here, the subscript $Y$ indicates that the unbroken electroweak gauge fields are spanned by what is called weak hypercharge (the $\mathrm{u}(1)_Y)$ factor) and weak isospin (the $\mathrm{su}(2)$ factor). All that is needed is a mechanism whereby one of the factors in $\mathrm{so}(4) \cong \mathrm{so}(3) \oplus \mathrm{so}(3)$ breaks to $\mathrm{u}(1)_Y$. This topic will occupy us in the following subsection.

\subsubsection{Spontaneous Symmetry Breaking from the Perspective of post-Einsteinian General Theory of Relativity}\label{spont_symm_breaking}

The derivation of the previous subsection has been purely formal. In this subsection, we complete our geometrodynamical theory of electroweak unification by investigating what happens in the Higgs sector. If the post-Einsteinian general theory of relativity as developed here is to describe the natural world, there must be \textit{some} mechanism by which to implement a breaking of the gauge symmetry. So far, the non-abelian symmetry remains unbroken, but can we rewrite the equations in such a way as to mimic the spontaneous symmetry breaking that occurs in the standard model of particle physics---in particular, to confer masses on the $Z^0$ and $W^\pm$ and to reduce the non-abelian gauge group from $\mathrm{u}(1)_Y \oplus \mathrm{su}(2)$ to the abelian group $\mathrm{u}(1)_\mathrm{em}$ (which is what survives in nature)? The signature of spontaneous symmetry breaking would be the existence of a non-zero linear contribution from one or more of the gauge fields, for in the Proca equation of motion a linear term appearing with coefficient $M^2$ corresponds to a vector-valued field with mass $M$. 

The problem seems---what is strange---to be related to the equivalence principle. As discussed above in {\S}\ref{status_of_equivalence_principle}, the stringent experimental limits on the net charge of atoms in molecular beams indicate that one wants bodies in gravitational free fall to behave in the same manner independent of whatever electrical charge they may carry. But we also found that electrically neutral bodies behave at second order in the jets as if they possessed an electrical charge equal to their rest mass times $\alpha^{1/2}$. The right perspective on the matter would seem to be to incorporate all rest mass terms into the gravitational interaction. In other words, one ought to measure the charge relative to the rest mass. The jet geodesic equation may be rewritten in the form
\begin{align}
	\nabla_X P &= \left( \nabla_X P \right)_1 + \Gamma X P \nonumber \\
	&= \left( \nabla_X P \right)_1 + \Gamma X (\alpha^{1/2} m + q) X \nonumber \\
	&= \left( \nabla_X P \right)_{12} + q \Gamma X X,
\end{align}
where the term indicated with the subscript 1 contains the ordinary gravitational contribution from 1-jets while the term indicated with subscript 12 contains in addition the gravitational contribution from 2-jets. The remaining terms are equivalent by proposition \ref{non_abelian_equ_of_motion} to the Lorentz force law for the forces originating in gauge fields. As we see on the second line above, what we have done in going to the third line is to regard a charged massive body as having the charge $q_1$ measured relative to $m$, or $q_0 = q_1 + \alpha^{1/2} m$, replacing the initial charge $q_0$. 

For then all massive bodies would behave the same in free fall in the absence of non-gravitational forces, while these latter would depend on the body’s respective charge-to-mass ratios. This would tend to lessen any tension in the precision experiments on the equality of the electrical charge of the proton versus that of the electron, because we may now measure electric charge with respect to the shifted origin. It was found above that there might be a contradiction with experiment if we want $|e_e| = |\alpha^{1/2}m_e-q_e| = |e_p| = |\alpha^{1/2}m_p+q_p|$, but now that we may disregard the rest mass, we could very well still set $|q_e| = |q_p|$ to obtain agreement with experiment.

To give the proposal of the previous paragraph a name, we may call the central rest mass hypothesis. Let us see what consequence the central rest mass hypothesis would have from the gauge-theoretic point of view. Introduce the following notation: denote the generators of one $\mathrm{so}(3)$ factor by $\hat{B}^0_{1,2,3}$ and of the other factor by $\hat{A}_{0,1,2}$ (we follow a standard convention in the literature in which the $\hat{B}$ is destined to become the weak hypercharge field while the $\hat{A}$ will correspond to weak isospin). In terms of the notation of the previous section, $B^0$ may be taken to be $X$ while $A$ may be taken to be $Y$. Thus, in explicit terms we have
\begin{equation}
	A = \frac{1}{2} \begin{pmatrix} 
		0 & -A_2 - B_2 & A_1 + B_1 & A_0 - B_0 \cr
		A_2 + B_2 & 0 & -A_0 - B_0 & A_1 - B_1 \cr
		-A_1 - B_1 & A_0 + B_0 & 0 & A_2 - B_2 \cr
		-A_0 + B_0 & -A_1 + B_1 & -A_2 + B_2 & 0 \cr
	\end{pmatrix}.
\end{equation}
Adopt the further convention of referring to the macroscopic weak hypercharge as $e$ and the macroscopic weak isospin as $f$. The Yang-Mills equation then reads as follows:
\begin{align}
	\text{\TH} * \text{\TH} A = - &2 \pi \varkappa \lambda^4 B^2 * J_1 \nonumber \\
	= &- 2 \pi \varkappa \lambda^4 B^2 *  \begin{pmatrix} 
		0 & (f+e)X_1 I_{12} & -(f+e)X_1 I_{13} & (e-f)X_1 I_{14} \cr
		-(f+e)X_1 I_{12} & 0 & -(f+e)X_1 I_{23} & (f-e)X_1 I_{24} \cr
		(f+e)X_1 I_{13} & (f+e)X_1 I_{23} & 0 & (f-e)X_1 I_{34} \cr
		(f-e)X_1 I_{14} & (e-f)X_1 I_{24} & (e-f)X_1 I_{34} & 0 \cr
	\end{pmatrix} \nonumber \\
	&- 2 \pi \varkappa \lambda^4 B^2 *  \begin{pmatrix} 
		0 & fA_2 + eB_2 & -fA_1 - eB_1 & -fA_0 + eB_0) \cr
		-fA_2 - eB_2 & 0 & -fA_0 - eB_0 & fA_1 - eB_1 \cr
		fA_1 + eB_1 & fA_0 + eB_0 & 0 & fA_2 - eB_2 \cr
		fA_0 - eB_0 & -fA_1 + eB_1 & -fA_2 + eB_2 & 0 \cr
	\end{pmatrix}.
\end{align}
In this context, the central rest mass hypothesis is tantamount to replacing $e$ with $e+\alpha^{1/2}m$ resp. $f$ with $q+\alpha^{1/2}m$ in the coefficients of $\hat{B}^0$ and $\hat{A}$ in the above equation. Transfer the rest-mass terms to the left hand side to obtain the non-abelian Proca Yang-Mills field equation in the following guise:
\begin{align}\label{proca_yang_mills}
	\text{\TH} * \text{\TH} A & +  2 \pi \varkappa \lambda^4 B^2 \alpha^{1/2} \varrho * 
	\begin{pmatrix} 
		0 & X_1 I_{12} & -X_1 I_{13} & 0 \cr
		-X_1 I_{12} & 0 & -X_1 I_{23} & 0 \cr
		X_1 I_{13} & X_1 I_{23} & 0 & 0 \cr
		0 & 0 & 0 & 0 \cr
	\end{pmatrix}
	\nonumber \\
	&+ 2 \pi \varkappa \lambda^4 B^2 \alpha^{1/2} \varrho *  
	\begin{pmatrix} 
		0 & A_2 + B_2 & -A_1 - B_1 & -A_0 + B_0) \cr
		-A_2 - B_2 & 0 & -A_0 - B_0 & A_1 - B_1 \cr
		A_1 + B_1 & A_0 + B_0 & 0 & A_2 - B_2 \cr
		A_0 - B_0 & -A_1 + B_1 & -A_2 + B_2 & 0 \cr
	\end{pmatrix}
	= - 2 \pi \varkappa \lambda^4 B^2 * J_1.
\end{align}
The reason for the nomenclature is that the terms linear in the gauge fields on the left hand side act to confer a non-zero mass on the vector gauge fields in the presence of other matter. Remember here that from equation (\ref{effective_momentum_current}), the term proportional to $\mathrm{ad}_A$ contains a factor of $\frac{1}{2}\lambda^2$ in the denominator which cancels against the cosmological density $\varrho_0$ in the numerator when we transfer the Proca term to the left hand side. We wish to recast equation (\ref{proca_yang_mills}) into a more perspicuous form. To this end, define a mass matrix
\begin{equation}
	\Delta \mathscr{L} = 4 \pi \varkappa \lambda^4 B^2 \alpha^{1/2} \varrho ~\mathrm{tr}~ \eta ~\mathrm{ad}_A ~\mathrm{ad}_A.
\end{equation}
Here,
\begin{equation}
	A = (A_0+X_1) \hat{A_0} + (A_1+X_1) \hat{A_1} + (A_2+X_1) \hat{A_2} + (B_0+X_1) \hat{B^0_0} + (B_1+X_1) \hat{B^0_1} + (B_2+X_1) \hat{B^0_2}.
\end{equation}
Then the Proca term in the Proca-Yang-Mills equation will be produced by variation of $\Delta \mathscr{L}$ with respect to $\mathbf{A}$ and $\mathbf{B}$. We can evaluate $\Delta \mathscr{L}$ directly to yield an expression in terms of its components,
\begin{align}\label{mass_matrix_reduced}
	\Delta \mathscr{L} &= \frac{1}{2} \pi \varkappa \lambda^4 B^2 \alpha^{1/2} \varrho \left(
	(A_0+B_0+ 2 X_1 I_{23})^2 + (A_1-B_1)^2 + (A_2-B_2)^2 \right) \nonumber \\
	&= \frac{1}{2} \pi \varkappa \lambda^4 B^2 \alpha^{1/2} \varrho \left(
	\mathbf{A}^2 + \mathbf{B}^2 + 2 A_0 B_0 - 2 A_1 B_1 - 2 A_2 B_2 + 4 (A_0+B_0)X_1 I_{23}+4 X_1^2 I_{23}^2 \right).
\end{align}
The term linear in $X_1$ can be eliminated by choice of gauge such that $\langle (A_0+B_0), X_1 \rangle = 0$, while the term quadratic in $X_1$ is entirely negligible in comparison with the contribution of the ordinary kinetic energy to the stress-energy, which goes as $4 \pi \varkappa \varrho X_1^2$, due to the smallness of the cosmological constant (as will be estimated below). From our definitions, however, $\mathbf{A}$ and $\mathbf{B}$ transform independently under changes of gauge generated by $\mathrm{so}(3) \oplus \mathrm{so}(3)$ (cf. equation (\ref{so4_commutation_relations}) above). The terms quadratic in $\mathbf{A}$ resp. $\mathbf{B}$ alone are invariant of course, but the inner product is not. Thus, the Proca term in the effective action breaks the gauge symmetry down to a subgroup of $\mathrm{so}(4)$. In order to identify what the residual gauge symmetry will be, observe that the inner product in equation (\ref{mass_matrix_reduced}) does remain invariant with respect to the diagonal subgroup. Therefore, it is expedient to redefine $B_0$ as the component of $\mathbf{B}$ parallel to $\mathbf{A}$. The two components of $\mathbf{B}$ in the perpendicular direction drop out of the problem, leaving us with a mass matrix of the form (with respect to the redefined variables),
\begin{equation}
	\Delta \mathscr{L} = \frac{1}{2} \pi \varkappa \lambda^4 B^2 \alpha^{1/2} \varrho \left( B_0^2 + 2 A_0 B_0 + A_0^2 + A_1^2 + A_2^2 \right).
\end{equation}

Note first that equation (\ref{proca_yang_mills}) differs from the Proca equation as ordinarily formulated in having a factor of density attached to the mass terms on the left hand side. In order to motivate its correctness, we engage in a thought-experiment: imagine a world in which bodies never collide; we could posit a unit mass $\delta m$ and regard every body as a collection of unit masses traveling together in a bound state. Then one could replace the Newtonian coupling constant $\varkappa$ with $\varkappa \delta m$ on the right hand side and never thereafter have to use any concept of mass, just the number of unit masses in a body (divide the jet geodesic equation through by mass so it really involves only charge-to-mass ratios). If we restore a concept of variable mass, we may sum over unit mass delta functions to produce a continuum limit on both sides yielding the term $* \varrho A$ on the left hand side, in other words, the modified form of Proca equation as it appears in equation (\ref{proca_yang_mills}). In elementary particle physics itself one never deals with macroscopic bound states and can avoid this subtlety which is why the usual Proca equation has no factor of $\varrho$ in it.

All that remains is to exhibit a change of basis in gauge field space that brings out the spontaneous symmetry breaking. At this point, the subtlety adverted to in the previous subsection becomes pertinent. Let us adopt the standard basis of $\mathrm{su}(2)$, viz.,
\begin{equation}
	E_1 = \frac{1}{2} \begin{pmatrix} i & 0 \cr 0 & -i \cr \end{pmatrix}; \qquad
	E_2 = \frac{1}{2} \begin{pmatrix} 0 & i \cr i & 0 \cr \end{pmatrix}; \qquad
	E_3 = \frac{1}{2} \begin{pmatrix} 0 & -1 \cr 1 & 0 \cr \end{pmatrix},
\end{equation}
satisfying the commutation relations $[E_1,E_2]=E_3$, $[E_2,E_3]=E_1$ and $[E_3,E_1]=E_2$ (\cite{hall}, Example 3.27). In terms of the adjoint representation of $\mathrm{su}(2)$ we have been using, the standard basis should be represented by
$E_{1,2,3} = A_{0,1,2}$ so as to satisfy the same commutation relations. But the Lie algebra $\mathrm{su}(2)$ by definition comprises anti-hermitian traceless matrices $X \in M_2(\vvmathbb{C})$ such that $X^* = - X$ and $\mathrm{tr}~ X=0$. Let us parametrize its three real dimensions via $a, b, c \in \vvmathbb{R}$, where
\begin{equation}
	X = \begin{pmatrix} ia & b+ic \cr -b + ic & -ia \cr \end{pmatrix}.
\end{equation}
The Lie algebra spanned by the $E_{1,2,3}$ resp. $A_{0,1,2}$ is isomorphic to three-dimensional Euclidean space equipped with the vector cross product, denoted $(\vvmathbb{R}^3,\times)$. It may easily be checked that the linear map $\phi: \mathrm{su}(2) \rightarrow (\vvmathbb{R}^3,\times)$ defined by $\phi(X) = (2a,2b,2c)$ establishes an isomorphism of Lie algebras over $\vvmathbb{R}$; that is, $\phi([X,Y])=\phi(X)\times\phi(Y)$ (cf. Hall, \cite{hall}, Problem 3.10). The salient point is that, to go in the other direction, one has to multiply the gauge field components by a factor of $\frac{1}{2}$. Hence, for the weak sector only we change the scale with which the gauge potential and field strength are measured, from $A_{0,1,2}$ to $\tilde{A}_{0,1,2} = \frac{1}{2} A_{0,1,2}$ or $A_{0,1,2} = 2 \tilde{A}_{0,1,2}$. In light of equations (\ref{rescaled_gauge_field}) and (\ref{rescaled_YM_lagrangian}), this is tantamount to a doubling of the weak coupling constant while leaving the weak hypercharge coupling constant unchanged. 

The development thus far permits an extraordinary statement: in post-Einsteinian theory, Weinberg's weak mixing angle is fixed to a definite value. That is, in units where the weak hypercharge coupling $g^\prime=2$, we have necessarily $g=4$. Weinberg's weak mixing angle is defined by
\begin{equation}
	\cos \theta_W = \frac{g}{\sqrt{g^2+g^{\prime2}}}; \qquad \sin \theta_W = \frac{g^\prime}{\sqrt{g^2+g^{\prime2}}}; \qquad \tan \theta_W = \frac{g^\prime}{g}.
\end{equation}
Therefore, the predicted value for $\theta_W$ in the classical limit as wavenumber tends to zero can be extracted from $\tan \theta_W = \frac{1}{2}$ to be $\theta_W = \sin^{-1} \dfrac{1}{\sqrt{5}} = 26.565^\circ$, or equivalently $\sin^2 \theta_W = \frac{1}{5}$. Phenomenologically, this represents what would be observed if it were possible to carry out experiments with macroscopically large bodies bearing appropriate weak hypercharges and weak isospins. This being impossible, we must have recourse to quantum field theory, from the standard model of which $\theta_W$ can be extracted as the ratio of $Z^0$ to $W^\pm$ gauge boson masses: $m_Z=m_W/\cos \theta_W$ \cite{glashow_weinberg}. Precision measurements place the weak mixing angle at $\theta_W = 28.1780^\circ$ or $\sin^2 \theta_W = 0.23120 \pm 0.00015$ at a momentum transfer equal to the mass of the Z boson. The running of the weak mixing angle, however, is a rather complicated affair and the experimental data are sparse (consult in this connection the thorough review by Kumar et al., \cite{kumar_et_al}). Suffice it to say that the near agreement between the theoretical value of $26.565^\circ$ and the measured value of $28.1780^\circ$ is encouraging (however the relation between the two is to be interpreted).

Rewrite $\Delta \mathscr{L}$ as
\begin{align}\label{mass_matrix}
	\Delta \mathscr{L} &= \frac{1}{2}
	\pi \varkappa \lambda^4 B^2 \alpha^{1/2} \varrho \bigg[ g^{\prime 2} (B^0)^2 + 2 g^\prime g B^0 A^0 + g^2 (A^0)^2 + 
	g^2 (A^1)^2 + g^2 (A^2)^2 \bigg] \nonumber \\
	&=
	2 \pi \varkappa \lambda^4 B^2 \alpha^{1/2} \varrho \bigg[ g^{\prime 2} (B^0)^2 + 2 g^\prime g B^0 A^0 + g^2 (A^0)^2 + 
	g^2 (A^1)^2 + g^2 (A^2)^2 \bigg],
\end{align}
where in the last line we set $g^\prime=1$, $g=2$ instead of $g^\prime=2$, $g=4$. The mass matrix becomes (dropping the prefactor which must be nearly constant on the scale over which the gauge fields vary)
\begin{equation}
	\frac{1}{2} \begin{pmatrix}
		g^\prime B^0 & g A^0 & g A^1 & g A^2 \cr \end{pmatrix}
	\begin{pmatrix}
		1 & 1 & 0 & 0 \cr
		1 & 1 & 0 & 0 \cr
		0 & 0 & 1 & 0 \cr
		0 & 0 & 0 & 1 \cr
	\end{pmatrix}
	\begin{pmatrix}
		g^\prime B^0 \cr g A^0 \cr g A^1 \cr g A^2 \cr \end{pmatrix}.
\end{equation}
This may equivalently be put into the form,
\begin{equation}
	\frac{1}{2} \begin{pmatrix}
		B^0 & A^0 & A^1 & A^2 \cr \end{pmatrix}
	\begin{pmatrix}
		g^{\prime 2} & g g^\prime & 0 & 0 \cr
		g g^\prime & g^2 & 0 & 0 \cr
		0 & 0 & g^2 & 0 \cr
		0 & 0 & 0 & g^2 \cr
	\end{pmatrix}
	\begin{pmatrix}
		B^0 \cr A^0 \cr A^1 \cr A^2 \cr \end{pmatrix}.
\end{equation}
The symmetric matrix $\begin{pmatrix} g^{\prime 2} & g g^\prime \cr g g^\prime & g^2 \cr \end{pmatrix}$ has eigenvalues zero and $g^2 + g^{\prime 2}$. Therefore, it can be diagonalized to yield a zero mode with a weak mixing transformation, which can be written as
\begin{equation}\label{weak_mixing}
	\begin{pmatrix} \gamma^0 \cr Z^0 \cr A^1 \cr A^2 \cr \end{pmatrix} = \begin{pmatrix}
		\cos \theta_W & \sin \theta_W & 0 & 0 \cr - \sin \theta_W & \cos \theta_W & 0 & 0 \cr 0 & 0 & 1 & 0 \cr 0 & 0 & 0 & 1 \end{pmatrix}
	\begin{pmatrix} B^0 \cr A^0 \cr A^1 \cr A^2 \cr \end{pmatrix},
\end{equation}
where $\theta_W$ is the Weinberg angle. Elementary computation yields for the mass matrix,
\begin{align}\label{intermediate_mass_matrix}
	&\frac{1}{2} \begin{pmatrix}
		\gamma^0 \cr Z^0 \cr A^1 \cr A^2 \cr \end{pmatrix}^t \times \nonumber \\ &\begin{pmatrix}
		(g^\prime \cos \theta_W - g \sin \theta_W)^2 & 
		g g^\prime \cos 2 \theta_W  + (g^{\prime 2} - g^2) \cos \theta_W \sin \theta_W & 0 & 0 \cr
		g g^\prime \cos 2 \theta_W  + (g^{\prime 2} - g^2) \cos \theta_W \sin \theta_W  &
		(g \cos \theta_W + g^\prime \sin \theta_W)^2 & 0 & 0 \cr
		0 & 0 &  g^2 & 0 \cr
		0 & 0 & 0 & g^2 \cr
	\end{pmatrix} \times \nonumber \\
	&\begin{pmatrix}
		\gamma^0 \cr Z^0 \cr A^1 \cr A^2 \cr \end{pmatrix}.
\end{align}
The value of the Weinberg angle that diagonalizes the mass matrix is, as above, $\theta_W = \sin^{-1} \frac{1}{\sqrt{5}}$. After substitution we get
\begin{equation}
	\frac{1}{2} \begin{pmatrix}
		\gamma^0 & Z^0 & A^1 & A^2 \cr \end{pmatrix}
	\begin{pmatrix}
		\frac{1}{5} (g - 2 g^\prime)^2 & -\frac{1}{5} (g-2g^\prime)(2g+g^\prime) & 0 & 0 \cr
		-\frac{1}{5} (g-2g^\prime)(2g+g^\prime) & \frac{1}{5} (2g + g^\prime)^2 & 0 & 0 \cr
		0 & 0 & g^2 & 0 \cr
		0 & 0 & 0 & g^2 \cr
	\end{pmatrix}
	\begin{pmatrix}
		\gamma^0 \cr Z^0 \cr A^1 \cr A^2 \cr \end{pmatrix}.
\end{equation}
Lastly, with the values of $g^\prime=1$, $g=2$ required by the theory, this reduces to
\begin{equation}
	\frac{1}{2} \begin{pmatrix}
		\gamma^0 & Z^0 & A^1 & A^2 \cr \end{pmatrix}
	\begin{pmatrix}
		0 & 0 & 0 & 0 \cr
		0 & 5 & 0 & 0 \cr
		0 & 0 & 4 & 0 \cr
		0 & 0 & 0 & 4 \cr
	\end{pmatrix}
	\begin{pmatrix}
		\gamma^0 \cr Z^0 \cr A^1 \cr A^2 \cr \end{pmatrix}.
\end{equation}
Hence, we end up with a massless photon $\gamma^0$ and massive $Z^0, W^+, W^-$ bosons obeying the mass ratio $M_{Z^0}/M_{W^\pm} = \sec \theta_W = \frac{\sqrt{5}}{2}$.
Thus, we have exhibited how spontaneous symmetry breaking occurs in post-Einsteinian theory and determined the theoretical value of the \textit{relative} masses of the $Z^0$ and $W^\pm$ bosons. As for their \textit{absolute} masses, equation (\ref{proca_yang_mills}) suggests that they scale with the local density of matter and would vanish in vacuum. But the relevant density of matter should include contributions from all sources, presumably including the cosmological constant which does not vary across all of space and would confer masses on the $Z^0$ and $W^\pm$ in vacuo. In the remainder of this paragraph, we undertake a preliminary discussion of how the absolute value of the masses of weak gauge bosons must come about, given the observed value of the cosmological constant 
\begin{equation}
	\Lambda = 1.1056 \times 10^{-56} ~\mathrm{cm}^{-2},
\end{equation} 
as determined by the Planck collaboration in 2018 \cite{planck_collaboration_2018}. In the 1-jet sector, as usual, the cosmological constant $\Lambda$ may be identified with a vacuum energy density $\varrho_1 c^2$, where the two are related by
\begin{equation}
	\Lambda = \frac{8 \pi G}{c^2} \varrho_1 = \frac{1}{2} \lambda^2.
\end{equation}
But $\varrho = \varrho_1/\frac{1}{2}\lambda^2$ so we have $8 \pi \varkappa \varrho c^2 = 1$. Comparing now with equation (\ref{proca_yang_mills}), we see that the dark energy term arising on the left hand side of the Proca-Yang-Mills equation will have coefficient $2 \pi \varkappa \lambda^4 B^2 \alpha^{1/2} \varrho$. In order for this to give rise to the mass terms for the $Z^0$ resp. $W^\pm$ bosons, in view of the the fact that $g^\prime = 2$ and $g=4$ we must equate (inserting the dimensional factors)
\begin{equation}
	\frac{1}{4} g^2 \lambda^4 B^2 \alpha^{1/2} = 4 \lambda^4 B^2 \ell_P^2 \alpha^{1/2} = \frac{M_{W^\pm}^2}{m_P^2},
\end{equation}
from which we evaluate $B = 3.1514 \times 10^{71} ~\mathrm{cm} = 1.9498 \times 10^{104}$ in geometrical units. Now, the phenomenology of the Higgs sector as it appears in post-Einsteinian general theory of relativity remains to be worked out. Ideally one would want an heuristic understanding of why the conjectured form of the dark energy term and its apparent relation to the Higgs scalar field hold, with the given numerical coefficients.

To conclude, let us remark on the status of the central rest mass hypothesis. This conjecture appears at once to be natural from an a priori point of view, as the logical extension of Einstein's equivalence principle, and to receive support, directly, from our derivation of Anderson's prediction formula for the flyby anomaly and indirectly, from the stringent experimental limits on the difference in magnitude between the elementary charges of the proton and the electron. The startling implication of this situation may perhaps most judiciously be circumscribed by saying that spontaneous symmetry breaking does not actually occur in nature, as many theoretical physicists have supposed for half a century, but represents merely an artifact of our parochial decision to describe the infinitesimal degrees of freedom in terms of conventional non-abelian gauge fields rather than through Cartan's vielbein, which is more intrinsically rooted in the Minkowskian structure of the space-time manifold in the infinitely small.

\subsection{Experimental Confirmation of the Relation among Coupling Constants}\label{exptl_confirmation}

The post-Einsteinian theory is subject to a very strong consistency condition. Namely, the coupling constant on the right-hand side of the Yang-Mills field equation (\ref{yang_mills_field_equ}) derived above must agree with the empirical value. Stated in SI units, the field equation reads
\begin{equation}\label{maxwell_in_SI}
	\text{\TH} * F = - \frac{1}{\varepsilon_0} * J_1,
\end{equation}
where $\varepsilon_0$ is the permittivity of free space and is given by
\begin{equation}\label{permittivity_def}
	\frac{1}{\varepsilon_0} = \frac{4 \pi \alpha \hslash c}{e^2} = 4 \pi = 12.5664 ~\mathrm{[cgs]},
\end{equation}
It is convenient to employ cgs units so that the right hand side becomes a pure number. Note that the factor $e^2/\alpha \hslash c = 1$ in cgs but $e_\mathrm{SI}^2/e_\mathrm{cgs}^2$ in SI. Now, at the end of {\S}\ref{spont_symm_breaking}, we determine the parameter $B$ so that $4 \lambda^4 B^2 \ell_P^2 = M_W^2/m_P^2 \alpha^{1/2}$. Therefore, for the sake of consistency we should have
\begin{equation}\label{em_consistency_condition}
	\frac{1}{\varepsilon_0} = 2 \pi \varkappa \lambda^4 B^2 = \frac{\pi \varkappa M_W^2}{2 m_P^2} = \frac{2 \pi \varkappa M_Z^2}{5 m_P^2}.
\end{equation}
A conversion factor is needed to go back and forth between geometrical units and any other system of units. In geometrical units, one works with dimensionless measures obtained by dividing the dimensional measure through by an elementary constant having the same dimension. Thus, for charge, $q_0 = q/e$ and for the potential $A_0 = (e/m_p c^2) A = (G e/c^4 \ell_P)A$. Therefore, on the right hand side of equation (\ref{maxwell_in_SI}) we replace
\begin{align}
	2 \pi \lambda^4 B^2 q_0 A_0 &= 2 \pi \frac{M_W^2}{4 m_P^2} \frac{c^2}{G^{1/2} \alpha^{1/2}} \frac{G}{c^4 \ell_P} q A \nonumber \\
	&= \frac{\pi M_W^2}{2 m_P^2} \frac{\alpha^{1/2} \ell_P^2 c^2}{G^{1/2} e^2 \ell_P} q A \nonumber \\
	&= \frac{\pi M_W^2}{2 m_P^2} \frac{e}{e^2} q A [\mathrm{cgs}] \nonumber \\
	&= \frac{\pi M_W^2}{2 m_P^2} \frac{10^7 \alpha \hslash c}{e_\mathrm{SI}^2} \left( \frac{10^{-7} qA}{e_\mathrm{SI}} \right) [\mathrm{SI}] \nonumber \\
	&= \left( \frac{\pi M_W^2}{2 m_P^2} \right) \left( \frac{10^7 \alpha \hslash c}{e_\mathrm{SI}^2} \right) \left( \frac{q A [\mathrm{cgs}]}{e_\mathrm{SI} (e_\mathrm{cgs}/e_\mathrm{SI})} \right) \nonumber \\
	&= \left( \frac{\pi M_W^2}{2 m_P^2} \right)  \iota q A [\mathrm{cgs}] \nonumber \\
	&= 4 \pi q A [\mathrm{cgs}]. 
\end{align}
The conversion factor would therefore be\footnote{This number itself has no fundamental significance since SI is not a natural system of units. Note that the current $J_1$ has units of $[\mathrm{charge}]\times [\mathrm{potential}]$ measured in coulombs and joules per coulomb respectively. In geometrical units, however, the gravitational coupling constant $\varkappa = G/c^4 = 1$ and charge is to be measured in multiples of the electron charge. In geometrical units, the conversion factor corresponds to the quantity $\iota = 8 \pi \chi^2/\alpha^{1/2} \ell_P \varkappa$. If formulae like equations (\ref{first_W_Z_relation}), (\ref{second_W_Z_relation}) and (\ref{third_W_Z_relation}) appear strange, remember that in equation (\ref{em_consistency_empirical}) the conversion factor $\iota$ is implictly multiplied by a coupling constant having unit magnitude in cgs whose units are such as to cause all dimensional factors to cancel, leaving the pure number $4 \pi$.}
\begin{align}
	\iota = &
	\left( \frac{4.80312 \times 10^{-10} ~\mathrm{esu}}{1.602176634 \times 10^{-19} ~\mathrm{coulomb}} \right)^2 \left( \frac{1}{1.602176634 \times 10^{-19} ~\mathrm{coulomb}} \right) \times \nonumber \\
	& \left( \frac{10^7 ~\mathrm{erg/joule}}{4.80312 \times 10^{-10} \mathrm{esu}/1.602176634 \times 10^{-19} ~\mathrm{coulomb}} \right).
\end{align}
The empirical values of the right hand side of equation (\ref{em_consistency_condition}) are as follows (using the values $M_W = 80.377 ~\mathrm{GeV}/c^2$ and $M_Z= 91.1876 ~\mathrm{GeV}/c^2$ reported by the Particle Data Group, \cite{particle_data_group}):
\begin{align}\label{em_consistency_empirical}
	\frac{\pi M_W^2}{2 m_P^2} \iota &=
	(6.80851 \times 10^{-35}) (1.87112 \times 10^{35}) = 12.7395 ~\mathrm{[cgs]} \nonumber \\
	\frac{2 \pi M_Z^2}{5 m_P^2} \iota &= (7.01017 \times 10^{-35}) (1.87112 \times 10^{35}) = 13.1169 ~\mathrm{[cgs]}.
\end{align}
In view of the fact that we have not included any quantum-field-theoretical corrections, the agreement of equation (\ref{em_consistency_empirical}) with equation (\ref{permittivity_def}) is surprisingly good. The departure of the right hand side from $4 \pi$ is systematic and turns out to be close to 
\begin{align}\label{first_W_Z_relation}
	\frac{\pi M_W^2}{2 m_P^2} \iota &= 4 \pi \chi \nonumber \\
	\frac{2 \pi M_Z^2}{5 m_P^2} \iota &= 4 \pi \chi^3,
\end{align}
where
\begin{equation}
	\chi = \frac{\cos \theta_{W,\mathrm{theory}}}{\cos \theta_{W,\mathrm{expt}}} = 
	\frac{2 M_Z}{\sqrt{5}M_W} = \frac{2/\sqrt{5}}{\cos 28.1780^\circ} = 1.014701 =1 + 2 \alpha.
\end{equation}
But the parameters $M_Z$, $M_W$ and $\theta_W$ are not independent of one another. Thus, it makes sense to remove the derived parameter $\theta_W$ from the problem and to express the result in terms of the measured quantities $M_Z$ and $M_W$ alone. The above may then with a little algebra be solved to yield another independent relation, namely,
\begin{equation}\label{second_W_Z_relation}
	\frac{M_W^3}{M_Z m_P^2} \iota = \frac{16}{\sqrt{5}}, \end{equation}
which holds good to a relative error of 0.99911. Another way of saying the same thing would be to solve equations (\ref{first_W_Z_relation}) and (\ref{second_W_Z_relation}) for $M_W$ and $M_Z$,
\begin{align}\label{third_W_Z_relation}
	M_W &= 2 \sqrt{2} (1 + \alpha) m_P/\iota^{1/2} \nonumber \\
	M_Z &= \sqrt{10} (1 + 3 \alpha) m_P/\iota^{1/2}.
\end{align}
These two semi-empirical prediction formulae are valid to the experimental precision of five significant figures. At the present stage of development of the theory, such numerical coincidences are by no means understood but they are highly suggestive of an as-yet-unknown underlying law that governs the process of spontaneous symmetry breaking.

	\section{Unification of Gravity with the Strong Nuclear Force}\label{chapter_9}

\def\su{\mathrm{su}}
\def\u{\mathrm{u}}
\def\so{\mathrm{so}}
\def\tr{\mathrm{tr}}
\def\spn{\mathrm{span}}
\def\imo{\mathrm{Im}~\vvmathbb{O}}
\def\der{\mathrm{der}}
\def\g{\mathrm{g}}
\def\ad{\mathrm{\ad}}

If the theory of fundamental forces of nature advanced in {\S}\ref{chapter_8} be cogent, it becomes incumbent on us to show how the strong nuclear force may be incorporated into the picture. Just as electroweak forces arise from 2-jets in the extended Riemannian metric, it is plausible to conjecture that including 3-jets ought likewise to yield another fundamental force of nature, to be identified with the strong nuclear force. To demonstrate the reality of this surmise will call upon all the apparatus previously developed. We begin by examining the 3-jet sector and show how higher tangents at third order may be rewritten as generators of gauge field degrees of freedom, just as has been done at second order in {\S}\ref{chapter_8}. Then, our postulated form of the equivalence principle is extended to third order and the pattern of spontaneous symmetry breaking it implies worked out in detail. The gauge degrees of freedom at third order in a space-time of four dimensions decompose into massive and massless modes. Those among the latter that survive symmetry breaking may be identified with generators of the real Lie algebra $\mathrm{su}(3)$, just as the massless mode among the second-order jets may be equated with the photon. 

\subsection{A Theoretical Framework for Describing the Dynamical Process of Nature}\label{dynam_process}

By the dynamical process of nature \cite{schelling} we understand a logical connection of ideas which are taken to represent the world of experience in theoretical terms (that is to say, the unfolding of a logical implication and not a temporal happening). The guiding principle, then, will be a unitary concept of force as the cause of change in the material world, which becomes differentiated due to the simultaneous action of the principle of polarity. In particular, an elaboration of the dynamical process will make apparent the nature of the fundamental forces and their relation to the inner structure of space and time, as is to be deduced from the elementary concepts of an infinitesimal geometry of higher order once applied to the context of relativistic physics.

It will be convenient to gather into one place a statement of the principles that have emerged over the course of our investigations thus far:

1) One should seek natural constructions depending on inherent properties of the mathematical objects to be considered rather than on gratuitous and artificial mechanistic hypotheses; and in so far as possible, maintain connection with experiment as motivation for the formalism to be introduced. This statement is meant not as idle, but as of central methodological importance to our undertaking and differentiates our approach from all others in currently being pursued in fundamental physics, for we seek not merely the \textit{hypothetical} necessity that attaches to a contingent mechanical model, however devised, but the \textit{apodictic} certainty proper to a natural science, as such. Such certainty is to be secured via the method of construction in which one first posits the general physical principles then, second, the mathematical formalism by which to realize them.

2) Kinematics as the science of change is to be relied upon as a fruitful source of ideas about how to extend physics beyond what is known. As we have seen above in Part I, kinematics appears in an altogether different light once one contemplates the potential role of infinitesimals of higher than first order. It must be supposed that we stand only at the beginning of an exploration of the implications of, for instance, the jet geodesic equation and the concept of a diffeomorphism with inertia, as described above in {\S}I.3. Infinitesimals are inherently rooted in the differentiable-manifold structure of space-time; hence, it is very natural to entertain the possibility that they may have non-trivial consequences for physics. Therefore, rather than to truncate at first order as has been done in all of theoretical physics from Newton to Einstein, we wish, in principle, to include infinitesimals to all higher orders (in practice, this will mean a series of approximations as one goes up order by order). The 
advantages of this approach are manifest: it eventuates in an almost unique specification of the sequence of models one ought to employ as one increases the order of infinitesimals retained. Thus, it respects Ockham's razor: for we shall have no reason to multiply entities beyond necessity (as certainly happens with supersymmetric models) and, what is just as important, have no cause to be diverted into the investigation of toy models, in as much as the phenomenology of the real degrees of freedom supposed to be present turns out to be simple enough to admit of analysis in its own right.

3) Third, we posit general covariance, viz., that all entities contemplated in the theory ought to correspond to intrinsically defined geometrical invariants such as the extended Riemannian curvature tensor. From a \textit{physical} point of view, the principle of general covariance is not entirely vacuous as some authors would contend, who maintain that it serves merely as a spur to the theorist's ingenuity to devise a covariant formulation for whatever theory he pleases (cf. Norton \cite{norton_physical_content}), for the following reason. Granted, while it is true that pretty much any theory (including Newtonian mechanics and classical electrodynamics) found out by a process of induction from experience not explicitly appealing to relativistic considerations may indeed be recast in covariant terms, when one does so it calls for the artificial introduction of additional structures with which one has to equip the manifold of space-time. In contrast, when one \textit{starts} from general covariance as Einstein understands it in his general theory of relativity, one has already a natural set of geometrical invariants with which to work, namely, the Riemannian metric tensor and objects derived from it, such as the Levi-Civita connection and the Riemannian curvature tensor. As we shall demonstrate in what follows, this situation continues to hold in the setting of higher-order infinitesimals.

4) The pseudo-Riemmanian signature of space-time is to be extended to higher orders in the jets in the most natural conceivable manner, as described above in {\S}II.2. Here, the principle of plenitude as introduced above in {\S}I.5 on purely mathematical grounds begins to have non-trivial implications for physics.

5) We propose the identification of higher jets as gauge indices, for which a physical interpretation in spatio-temporal terms exists, namely, that matter is to be viewed as a flow of infinitesimal rotations of space-time (i.e., a diffeomorphism with inertia in the terminology of {\S}I.3). Thus, the connection of post-Einsteinian theory with known physics is to be found out by means of an appropriate reinterpretation of the basic entities of the Cartan formalism and their translation into gauge-theoretical terms with which physicists are currently familiar, as the theoretical ground of the standard model as conventionally described.

6) We propose to view dark energy as a residual background curvature of space-time, i.e., as not just a scalar constant but as a curvature 2-form, as described in {\S}II.2.

7) The transition from pure mathematics to physics properly speaking involves the introduction of a concept of \textit{quantity of matter}. Thus, kinematics has to be replaced by dynamics. Wherefore, not only does kinematics have to be fundamentally reconceived from the perspective of higher-order geometry, so also with dynamics. This implies a central role of higher inertia, as suggested above in {\S}{\S}I.3 and \ref{unification_of_gr_and_em}, the right understanding of which must be reckoned as being far from complete. For higher inertia seems to involve a curious intersection between the properties of jets as part of a Minkowskian space-time and their properties when represented in terms of gauge field degrees of freedom.

8) The central rest mass hypothesis, as sketched above in {\S}\ref{spont_symm_breaking}, is to be viewed as the proper generalization of the equivalence principle. For we may describe a body in motion either from the spatio-temporal point of view or from the gauge-theoretical perspective, and the central rest mass hypothesis governs their interrelationship in such a way as to ensure that all moving bodies exhibit the same behavior, viz., inertial motion in free fall, where now one must apply the notion of higher inertia to incorporate the higher infinitesimals in a consistent manner. 

9) The symmetry breaking mechanism is to be accomplished not via a Higgs field whose potential term remains entirely unknown in the conventional approach to electroweak theory and its extensions to grand unified theories, but via the interaction of gauge fields with the cosmological term, for which the central rest mass hypothesis supplies a unique proposed Proca term in a completely natural and unforced way as the Killing form inherited from the Minkowskian metric on spaces of higher jets. Therefore, one is duty-bound to investigate its consequences before turning to other, essentially more complicated and unmotivated mechanistic models.

10) The principles of stepwise covariance, order-by-order ascent and microscopic indiscernability, as expounded below in {\S}\ref{selection_effective_gauge_algebra}, supply the physical interpretation behind the mathematical procedure by which the form of the fundamental forces realized in nature at energies far below the Planck scale is to be determined. For the infinitesimals are not merely kinematical but endowed with a genuinely dynamical status, which constrains the conditions under which they may manifest themselves in everyday physics.

\subsection{Unification of Gravity and Chromodynamics}\label{unification_of_gr_and_cd}

After a quick review of our starting point in {\S}\ref{preliminaries}, we describe in {\S}{\S}\ref{structure_of_gauge_alg}, \ref{reduction_to_isotropic}, \ref{identification_of_chromodynamics} how the general physical principles of post-Einsteinian general theory of relativity lead naturally to a unification of all four fundamental forces appearing in the standard model, when the infinitesimal degrees of freedom at higher order receive an interpretation as equivalent in the weak-field limit to non-abelian gauge fields.

\subsubsection{Preliminary Considerations}\label{preliminaries}

We have already seen in {\S}\ref{chapter_8} that when one adopts a physical interpretation of matter as a flow of infinitesimal deformations to the spatial curvature 2-forms and posits an appropriate interaction with the jet affine connection 1-forms in the Einstein-Hilbert action, the variational formulation yields a field equation of non-abelian Yang-Mills type. There, it is further shown that at the 2-jet level the implied gauge fields can be described by electroweak theory. Here, we wish to investigate what happens when one undertakes the obvious extension to 3-jets. To start with, recall {\S}\ref{unification_of_gr_and_em}. It is evident that the jet geodesic equation for the motion of a body must contain a series of contributions coming from higher tangents in its velocity field $X$, which to leading order generates
\begin{equation}
	\frac{d X}{d \lambda} = - \Gamma X^0 X^0 - 2 \Gamma X^0 X^{00} - 2 \Gamma X^0 X^{000} + \cdots, 
\end{equation}
where now we truncate the expansion at \textit{third} order. Proposition \ref{non_abelian_equ_of_motion} shows that the terms at second order and beyond on the right hand side may in fact be interpreted as non-abelian Yang-Mills forces. Given our success in explaining electroweak theory in {\S}\ref{chapter_8}, it is natural to expect that the term at third order, whatever it may be, ought to reduce to the strong nuclear force as described by classical chromodynamics, viz., a non-abelian Yang-Mills gauge theory whose effective degrees of freedom display an $SU(3)$ symmetry.

According to the general principles laid out in {\S}\ref{dynam_process}, the procedure we must follow is clear. We have to analyze the Proca term implied by the central rest mass hypothesis in order to see what initial 3-jet gauge degrees of freedom survive as a true gauge symmetry of the effective theory. The structure of the jet spaces becomes ever more complicated as one goes to higher and higher order, of course, so that our task will become that much the more involved. But, fortunately, the physical principles we already have at hand suffice to conduct us to the final result.

\subsubsection{Structure of the Gauge Lie Algebra at Third Order in the Jets}\label{structure_of_gauge_alg}

It will be helpful to begin by demonstrating the symmetry of the Yang-Mills field equation derived in {\S}\ref{yang_mills_derivation} with respect to the full gauge group. Let us denote a local change of frame by $\Lambda: M \rightarrow SO(p+q)$, where $p$ denotes the number of timelike directions and $q$ the number of spacelike directions in the generalized Riemannian metric tensor $g$ defined on jets up to a given order in the space-time $M$. This change of frame may be considered as arising by exponentiation of a gauge field $A: M \rightarrow \mathrm{so}(p+q)$.

\begin{proposition}
	The Yang-Mills equation is symmetric with respect to the change of gauge $\Lambda$.
\end{proposition}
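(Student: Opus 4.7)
The plan is to establish that every object appearing in the Yang-Mills field equation (\ref{yang_mills_field_equ}) transforms covariantly (by conjugation) under $\Lambda$, so that conjugating both sides yields the same relation for the transformed quantities. First I would derive the transformation law for the gauge potential itself. Demanding that the covariant exterior derivative behave as an intertwinor, $\text{\TH}_{A'}(\Lambda \eta \Lambda^{-1}) = \Lambda (\text{\TH}_A \eta)\Lambda^{-1}$ for any Lie-algebra-valued form $\eta$, forces the standard inhomogeneous rule
\begin{equation*}
A' = \Lambda A \Lambda^{-1} - (\text{\th}\Lambda)\Lambda^{-1},
\end{equation*}
which is verified by expanding $\text{\th}(\Lambda \eta \Lambda^{-1})$ via Leibniz together with lemma \ref{wedge_to_ad}, which translates the wedge product into the adjoint action on the gauge indices. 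Substituting this law into Cartan's structural equation (\ref{non_abelian_cartan}) and using the Maurer-Cartan identity $\text{\th}(\Lambda^{-1}) = -\Lambda^{-1}(\text{\th}\Lambda)\Lambda^{-1}$, the inhomogeneous pieces cancel in pairs and one obtains the homogeneous transformation $F' = \Lambda F \Lambda^{-1}$.

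Next, since $\Lambda \in SO(p+q)$ preserves the generalized Minkowskian metric of equation (\ref{jet_signature}) that is used to define the Hodge star, the dual commutes with adjoint action, $*F' = \Lambda (*F) \Lambda^{-1}$. Combined with the intertwining property of $\text{\TH}$ just established, the left-hand side of (\ref{yang_mills_field_equ}) transforms by conjugation, $\text{\TH}_{A'} *F' = \Lambda (\text{\TH}_A *F) \Lambda^{-1}$. For the right-hand side, the current $J_1$ of equation (\ref{effective_momentum_current}) decomposes as $q_1(X_1+X_2)\otimes I + \mathrm{ad}_{q_1 A}$. The physical velocity $X_1+X_2$ is inert under a pure internal gauge rotation, while the charge operator $I$ acts on the gauge indices and transforms by conjugation $I' = \Lambda I \Lambda^{-1}$, consistent with its interpretation via Schur's lemma in equation (\ref{Lorentz_irrep_decomp}). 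The $\mathrm{ad}_{q_1 A}$ piece acquires an inhomogeneous term $-\mathrm{ad}_{(\text{\th}\Lambda)\Lambda^{-1}}$ which is precisely what is needed to match the analogous inhomogeneous contribution hidden inside $\text{\TH}_A *F$ on the left, so that both sides end up transforming by the same conjugation rule and the equation is preserved.

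The main obstacle will be the bookkeeping of these inhomogeneous pieces generated by $\text{\th}\Lambda$: no individual term in the Yang-Mills equation is invariant in isolation, and one must verify that they conspire, via the same Leibniz calculation underlying Cartan's structural equation and proposition \ref{gauge_covariant_deriv}, to produce a genuinely covariant tensorial identity. A subsidiary subtlety is that the statement is to be understood in the approximation adopted for corollary \ref{homogeneous_equ} and theorem \ref{inhomogeneous_equ} (neglecting terms cubic in $A$ or quadratic in $A$ with a spatial derivative); full gauge invariance then follows at the working order, while the Proca term added in equation (\ref{proca_yang_mills}) is deliberately \emph{not} invariant under the full gauge group, this being the very content of spontaneous symmetry breaking discussed in {\S}\ref{spont_symm_breaking}.
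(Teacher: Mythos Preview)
Your argument is correct and follows the standard textbook route for local gauge invariance, but it differs from the paper's own proof in a notable way. The paper does not use the inhomogeneous transformation law $A' = \Lambda A \Lambda^{-1} - (\text{\th}\Lambda)\Lambda^{-1}$ at all; instead it simply posits that the gauge potential, like every other Lie-algebra-valued current, transforms in the adjoint representation, $A \mapsto \Lambda A \Lambda^{-1}$, and then checks by a short direct computation that $\text{\TH}$, the Hodge star, and hence both sides of the field equation conjugate homogeneously by $\Lambda$. In particular, the step $\text{\th}(\Lambda Y \Lambda^{-1}) = \Lambda(\text{\th}Y)\Lambda^{-1}$ in the paper's calculation tacitly requires $\text{\th}\Lambda = 0$, so the argument as written establishes invariance under rigid (global) changes of frame. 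Your approach, by contrast, handles a genuinely position-dependent $\Lambda$ and tracks how the inhomogeneous $(\text{\th}\Lambda)\Lambda^{-1}$ pieces cancel via the Maurer--Cartan identity in $F$ and match between the $\text{\TH}_A$ on the left and the $\mathrm{ad}_{q_1 A}$ contribution to $J_1$ on the right; this is the stronger and more conventional statement of gauge symmetry. The paper's proof is shorter precisely because it sidesteps that bookkeeping, at the cost of covering only the constant-$\Lambda$ case explicitly.
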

\begin{proof}
	In general, Lie-algebra valued currents transform in the adjoint representation:
	$A \rightarrow \Lambda A \Lambda^{-1}$.
	This action extends to $k$-forms via $Y^{(1)} \wedge \cdots \wedge Y^{(k)} \rightarrow \Lambda Y^{(1)} \Lambda^{-1} \wedge \cdots \wedge \Lambda Y^{(k)} \Lambda^{-1} = \Lambda Y^{(1)} \wedge \cdots \wedge Y^{(k)} \Lambda^{-1}$. Therefore, the covariant derivative transforms as
	\begin{align}
		\text{\TH} Y = \left( \text{\th} \otimes \mathrm{id} + A \wedge \right) Y \rightarrow 
		& \left( \text{\th} \otimes \mathrm{id} + \Lambda A \Lambda^{-1} \wedge \right) \Lambda Y \Lambda^{-1} \nonumber \\
		&= \Lambda ( \text{\th} Y) \Lambda^{-1} + \Lambda A \Lambda^{-1} \wedge \Lambda Y \Lambda^{-1} \nonumber \\
		&= \Lambda \left( \text{\th} Y + A \wedge Y \right) \Lambda^{-1} \nonumber \\
		&= \Lambda \text{\TH} Y \Lambda^{-1}.
	\end{align}
	One could also write the covariant derivative in terms of the adjoint operation. Clearly, $\mathrm{ad}_{\Lambda A \Lambda^{-1}} = \Lambda ~\mathrm{ad}_A \Lambda^{-1}$. Now, what one has to notice is that the Hodge-* operates only on spatial indices and not on gauge indices; hence,
	\begin{equation}
		* \left( \Lambda A \Lambda^{-1} \right) = \Lambda \left( * A \right) \Lambda^{-1}.
	\end{equation}
	Putting these two facts together, it is evident that
	\begin{equation}
		- \text{\TH}^*_{-\Lambda A \Lambda^{-1}} * \text{\TH}_{\Lambda A \Lambda^{-1}} \left( \Lambda A \Lambda^{-1} \right) = - \frac{1}{\varepsilon_0} * \left( \Lambda J_1 \Lambda^{-1} \right) 
	\end{equation}
	becomes
	\begin{equation}
		- \Lambda \left( \text{\TH}^*_{-A} * \text{\TH}_A A \right) \Lambda^{-1} = - \frac{1}{\varepsilon_0} \Lambda (* J_1) \Lambda^{-1}. 
	\end{equation}
	Therefore, if the field configuration described by $A$ is a solution, so will be that corresponding to $\Lambda A \Lambda^{-1}$, as was to be shown.
\end{proof}

Let us particularize to the third-order case; i.e., we truncate all jets to their expansion up to the third order. Here, the number of timelike directions will be $p=1+3+7=11$ and the number of spacelike directions, $q=3+7+13=23$. Recall the decomposition of the jet affine connection 1-forms by the grading by jet degree:
\begin{equation}
	A = A^{(1,1)} + A^{(1,2)} + A^{(1,3)} + A^{(2,2)} + A^{(2,3)} + A^{(3,3)}. 
\end{equation}
We have seen in {\S}\ref{unification_gravity_electroweak} that in the (2,2)-sector, when treated up to equivalence under change of gauge, the number of effective degrees of freedom among the 2-jets goes from ten to four (one spatial, three timelike); that is, we can now replace $\mathrm{so}(11,23)$ with $\mathrm{so}(11,17)$. 

The guiding principle behind the mechanism of spontaneous symmetry breaking is that the full gauge symmetry of $\mathrm{so}(11,17)$ becomes broken by the Proca term, however. The reason for this is that the mass matrix does not respect the full gauge symmetry, but only a subgroup of it. The source of the non-invariance lies in an incompatibility between the Lie algebraic structure of the gauge field potentials $\omega^\alpha_\beta$, which belong to $\mathrm{so}(20)$, and their pseudo-Riemannian spatio-temporal structure as currents: when one raises the index from $\omega_{\alpha\beta}$ resp. lowers the index from $\omega^{\alpha\beta}$ it introduces minus signs in the timelike factors of the jets, which cause the Proca term to have a Killing inner product appropriate to $\mathrm{so}(7,13)$ instead of $\mathrm{so}(20)$. This may be seen most perspicuously by writing the Proca term as a quadratic form in the gauge fields. We found above in {\S}\ref{spont_symm_breaking} the following representation of the Proca term:
\begin{equation}
	\Delta \mathscr{L} = \frac{1}{4} \pi \varkappa \lambda^4 B^2 \left( B_0^2 + 2 A_0 B_0 + A_0^2 + A_1^2 + A_2^2 \right).
\end{equation}
Here, $B_0$ represents the generator of weak hypercharge and $A_{0,1,2}$ the generators of weak isospin, which in terms of the Cartesian coordinates of the inertial frame look like
\begin{equation}\label{electroweak_sector}
	A^{(2,2)} = \frac{1}{2} \begin{pmatrix}
		0 & - 2 A_2 & 2A_1 & 2A_0 - B_0 \cr
		2A_2 & 0 & 2A_0 + B_0 & -2A_1 \cr	
		-2A_1 & -2A_0 - B_0 & 0 & -2A_2 \cr
		2A_0 + B_0 & 2A_1 & 2A_2 & 0 \cr
	\end{pmatrix}.	
\end{equation}
We further saw that the Proca term in the 2-jets could be written as (after eliminating $B_{1,2}$ and $X_1$),
\begin{align}
	\Delta \mathscr{L}_{(2,2)} &=  \frac{1}{4} \pi \varkappa \lambda^4 B^2 \left(
	(2A_0+B_0)^2 + 4A_1^2 + 4A_2^2 \right) \nonumber \\
	&= \frac{1}{2} \pi \varkappa \lambda^4 B^2 ~\mathrm{tr}~ h^{(2,2)} A^{(2,2)} A^{(2,2)}
\end{align}
with
\begin{equation}
	h^{(2,2)} = \begin{pmatrix} 1 & 0 & 0 & 0 \cr 0 & -1 & 0 & 0 \cr 0 & 0 & -1 & 0 \cr 0 & 0 & 0 & -1 \cr \end{pmatrix}.
\end{equation}
We would like to extend the representation of the Proca term as a Killing form to 3-jets. To do so, notice first the peculiarity that we do not have any Proca term in the (1,1)-sector, for we choose to keep the mass term on the right-hand side of Einstein's field equations; that is, as usual, one views the stress-energy tensor as a source of the gravitational field. 

Therefore, remembering that the (2,2)-sector has signature (3,1) while the (3,3)-sector has signature (7,13), one can define
\begin{align}
	h = \mathrm{diag}~ (&0,0,0,0, \nonumber \\
	&1,-1,-1,-1, \nonumber \\
	&-1,-1,-1,-1,-1,-1,-1,1,1,1,1,1,1,1,1,1,1,1,1,1).
\end{align} 
and write the full Proca term as
\begin{equation}
	\Delta \mathscr{L} = \frac{1}{2} \pi \varkappa \lambda^4 B^{|\alpha|+|\beta|-2} ~\mathrm{tr}~ h A A.
\end{equation}
To have a unified notation, let us denote the generators of $\mathrm{so}(11,17)$ by $X_{ij}$ where $(X_{ij})_{kl} = \delta_{ik}\delta_{jl}-\delta_{il}\delta_{jk}$. In order to get an impression of what the mass matrix looks like (which will be handy to refer to in later sections), it will be useful to display the entries of $\Delta \mathscr{L}$ for $A = X_{ij}$ with $1 \le i,j \le 28$; i.e., $\mathrm{tr}~ h X_{ij} X_{ij}$. See equation (\ref{mass_matrix_illustration}). The block structure may be clearly seen, both as to magnitude and as to sign of the respective traces.

\begin{landscape}
	\tiny
	\renewcommand{\arraystretch}{1.618}
	\begin{equation}\label{mass_matrix_illustration}
		\begin{array}{*{28}c}
			0 & 0 & 0 & 0 & -1 & 1 & 1 & 1 & 1 & 1 & 1 & 1 & 1 & 1 & 1 & -1 & -1 & -1 & -1 & -1 & -1 & -1 & -1 & -1 & -1 & -1 & -1 & -1 \\
			0 & 0 & 0 & 0 & -1 & 1 & 1 & 1 & 1 & 1 & 1 & 1 & 1 & 1 & 1 & -1 & -1 & -1 & -1 & -1 & -1 & -1 & -1 & -1 & -1 & -1 & -1 & -1 \\
			0 & 0 & 0 & 0 & -1 & 1 & 1 & 1 & 1 & 1 & 1 & 1 & 1 & 1 & 1 & -1 & -1 & -1 & -1 & -1 & -1 & -1 & -1 & -1 & -1 & -1 & -1 & -1 \\
			0 & 0 & 0 & 0 & -1 & 1 & 1 & 1 & 1 & 1 & 1 & 1 & 1 & 1 & 1 & -1 & -1 & -1 & -1 & -1 & -1 & -1 & -1 & -1 & -1 & -1 & -1 & -1 \\
			-1 & -1 & -1 & -1 & 0 & 0 & 0 & 0 & 0 & 0 & 0 & 0 & 0 & 0 & 0 & -2 & -2 & -2 & -2 & -2 & -2 & -2 & -2 & -2 & -2 & -2 & -2 & -2 \\
			1 & 1 & 1 & 1 & 0 & 0 & 2 & 2 & 2 & 2 & 2 & 2 & 2 & 2 & 2 & 0 & 0 & 0 & 0 & 0 & 0 & 0 & 0 & 0 & 0 & 0 & 0 & 0 \\
			1 & 1 & 1 & 1 & 0 & 2 & 0 & 2 & 2 & 2 & 2 & 2 & 2 & 2 & 2 & 0 & 0 & 0 & 0 & 0 & 0 & 0 & 0 & 0 & 0 & 0 & 0 & 0 \\
			1 & 1 & 1 & 1 & 0 & 2 & 2 & 0 & 2 & 2 & 2 & 2 & 2 & 2 & 2 & 0 & 0 & 0 & 0 & 0 & 0 & 0 & 0 & 0 & 0 & 0 & 0 & 0 \\
			1 & 1 & 1 & 1 & 0 & 2 & 2 & 2 & 0 & 2 & 2 & 2 & 2 & 2 & 2 & 0 & 0 & 0 & 0 & 0 & 0 & 0 & 0 & 0 & 0 & 0 & 0 & 0 \\
			1 & 1 & 1 & 1 & 0 & 2 & 2 & 2 & 2 & 0 & 2 & 2 & 2 & 2 & 2 & 0 & 0 & 0 & 0 & 0 & 0 & 0 & 0 & 0 & 0 & 0 & 0 & 0 \\
			1 & 1 & 1 & 1 & 0 & 2 & 2 & 2 & 2 & 2 & 0 & 2 & 2 & 2 & 2 & 0 & 0 & 0 & 0 & 0 & 0 & 0 & 0 & 0 & 0 & 0 & 0 & 0 \\
			1 & 1 & 1 & 1 & 0 & 2 & 2 & 2 & 2 & 2 & 2 & 0 & 2 & 2 & 2 & 0 & 0 & 0 & 0 & 0 & 0 & 0 & 0 & 0 & 0 & 0 & 0 & 0 \\
			1 & 1 & 1 & 1 & 0 & 2 & 2 & 2 & 2 & 2 & 2 & 2 & 0 & 2 & 2 & 0 & 0 & 0 & 0 & 0 & 0 & 0 & 0 & 0 & 0 & 0 & 0 & 0 \\
			1 & 1 & 1 & 1 & 0 & 2 & 2 & 2 & 2 & 2 & 2 & 2 & 2 & 0 & 2 & 0 & 0 & 0 & 0 & 0 & 0 & 0 & 0 & 0 & 0 & 0 & 0 & 0 \\
			1 & 1 & 1 & 1 & 0 & 2 & 2 & 2 & 2 & 2 & 2 & 2 & 2 & 2 & 0 & 0 & 0 & 0 & 0 & 0 & 0 & 0 & 0 & 0 & 0 & 0 & 0 & 0 \\
			-1 & -1 & -1 & -1 & -2 & 0 & 0 & 0 & 0 & 0 & 0 & 0 & 0 & 0 & 0 & 0 & -2 & -2 & -2 & -2 & -2 & -2 & -2 & -2 & -2 & -2 & -2 & -2 \\
			-1 & -1 & -1 & -1 & -2 & 0 & 0 & 0 & 0 & 0 & 0 & 0 & 0 & 0 & 0 & -2 & 0 & -2 & -2 & -2 & -2 & -2 & -2 & -2 & -2 & -2 & -2 & -2 \\
			-1 & -1 & -1 & -1 & -2 & 0 & 0 & 0 & 0 & 0 & 0 & 0 & 0 & 0 & 0 & -2 & -2 & 0 & -2 & -2 & -2 & -2 & -2 & -2 & -2 & -2 & -2 & -2 \\
			-1 & -1 & -1 & -1 & -2 & 0 & 0 & 0 & 0 & 0 & 0 & 0 & 0 & 0 & 0 & -2 & -2 & -2 & 0 & -2 & -2 & -2 & -2 & -2 & -2 & -2 & -2 & -2 \\
			-1 & -1 & -1 & -1 & -2 & 0 & 0 & 0 & 0 & 0 & 0 & 0 & 0 & 0 & 0 & -2 & -2 & -2 & -2 & 0 & -2 & -2 & -2 & -2 & -2 & -2 & -2 & -2 \\
			-1 & -1 & -1 & -1 & -2 & 0 & 0 & 0 & 0 & 0 & 0 & 0 & 0 & 0 & 0 & -2 & -2 & -2 & -2 & -2 & 0 & -2 & -2 & -2 & -2 & -2 & -2 & -2 \\
			-1 & -1 & -1 & -1 & -2 & 0 & 0 & 0 & 0 & 0 & 0 & 0 & 0 & 0 & 0 & -2 & -2 & -2 & -2 & -2 & -2 & 0 & -2 & -2 & -2 & -2 & -2 & -2 \\
			-1 & -1 & -1 & -1 & -2 & 0 & 0 & 0 & 0 & 0 & 0 & 0 & 0 & 0 & 0 & -2 & -2 & -2 & -2 & -2 & -2 & -2 & 0 & -2 & -2 & -2 & -2 & -2 \\
			-1 & -1 & -1 & -1 & -2 & 0 & 0 & 0 & 0 & 0 & 0 & 0 & 0 & 0 & 0 & -2 & -2 & -2 & -2 & -2 & -2 & -2 & -2 & 0 & -2 & -2 & -2 & -2 \\
			-1 & -1 & -1 & -1 & -2 & 0 & 0 & 0 & 0 & 0 & 0 & 0 & 0 & 0 & 0 & -2 & -2 & -2 & -2 & -2 & -2 & -2 & -2 & -2 & 0 & -2 & -2 & -2 \\
			-1 & -1 & -1 & -1 & -2 & 0 & 0 & 0 & 0 & 0 & 0 & 0 & 0 & 0 & 0 & -2 & -2 & -2 & -2 & -2 & -2 & -2 & -2 & -2 & -2 & 0 & -2 & -2 \\
			-1 & -1 & -1 & -1 & -2 & 0 & 0 & 0 & 0 & 0 & 0 & 0 & 0 & 0 & 0 & -2 & -2 & -2 & -2 & -2 & -2 & -2 & -2 & -2 & -2 & -2 & 0 & -2 \\
			-1 & -1 & -1 & -1 & -2 & 0 & 0 & 0 & 0 & 0 & 0 & 0 & 0 & 0 & 0 & -2 & -2 & -2 & -2 & -2 & -2 & -2 & -2 & -2 & -2 & -2 & -2 & 0 
		\end{array}
	\end{equation}
	\normalsize
	\begin{equation}
		\mathrm{tr}~h X_{ij} X_{ij}, \qquad 1 \le i,j \le 28. \nonumber
	\end{equation}
\end{landscape}

\subsubsection{Reduction to the Totally Isotropic Subspace}\label{reduction_to_isotropic}

In this section, we focus on the (3,3)-sector of gauge fields $A^{(3,3)}$. Considered by themselves, these generate an $SO(7,13)$ subgroup of the full gauge group with Lie algebra $\mathrm{so}(7,13)$. If we look at the Killing form, however, it is easy to see why it cannot be invariant under the full gauge group. One finds that $\frac{1}{2} \cdot 6 \cdot 7 = 21$ modes have positive mass squared, $\frac{1}{2} \cdot 12 \cdot 13 = 78$ have negative mass squared and $7 \cdot 13 = 91$ are massless. But, as we have said, not all of these generate gauge transformations preserve the value of the Proca Lagrangian $\Delta \mathscr{L}$. In order to see this, we first write out the Proca term explicitly as
\begin{equation}
	\Delta \mathscr{L}_{(3,3)} = \pi \varkappa \lambda^4 B^2 \left( \sum_{1 \le i < j \le 7} a_{ij}^2 - \sum_{8 \le i < j \le 20} a_{ij}^2 \right),
\end{equation}
where we take for the gauge field the general form
\begin{equation}
	A^{(3,3)} = \sum_{1 \le i < j \le 20} a_{ij} X_{ij}.
\end{equation}
Consider first the zero modes in the off-diagonal blocks. Without loss of generality, choose one of them, say, $X_{7,8}$. Upon exponentiation to obtain a rotation matrix $R_{(7,8)}(\theta) = \exp (-\theta X_{7,8})$, one finds a $2 \times 2$ block of the form
\begin{equation}
	\begin{pmatrix}
		\cos \theta & - \sin \theta \\
		\sin \theta & \cos \theta \\
	\end{pmatrix}
\end{equation}
in the $i=7,8$ and $j=7,8$ positions, 1 on the rest of the diagonal and zero everywhere else. Compute how the Proca term transforms under the adjoint action of $R_{(7,8)}(\theta)$ as follows:
\begin{align}\label{rotated_proca}
	\Delta \mathscr{L}_{(3,3)} = \pi \varkappa \lambda^4 B^2 & \left[ \sum_{1 \le i < j \le 6} a_{ij}^2 - \sum_{9 \le i < j \le 20} a_{ij}^2 
	+ \right. \nonumber \\
	& \left. \left( \sum_{1 \le i \le 6} a_{i7}^2 + \sum_{9 \le i \le 20} a_{7i}^2 - \sum_{1 \le j \le 6} a_{j8}^2 - \sum_{9 \le j \le 20} a_{8j}^2 \right) \cos 2 \theta + \right. \nonumber \\
	& \left. \left( \sum_{1 \le i \le 6} a_{i7} a_{i8} + \sum_{9 \le j \le 20} a_{7j} a_{8j} \right) \sin 2 \theta 
	\right].
\end{align}
Equation (\ref{rotated_proca}) is manifestly not invariant for the most general field configuration and the same obviously applies for any rotation generated by $X_{ij}$ with $1 \le i \le 7$, $8 \le j \le 20$. Thus, we have eliminated the off-diagonal blocks and can reduce the possible gauge algebra in the (3,3)-sector from $\so(7,13)$ to $\so(7) \oplus \so(13)$.

But now evidently the 21 modes in $\so(7)$ have positive mass squared while the 78 modes in $\so(13)$ have negative mass squared. In other words, the Proca term corresponds to a non-degenerate indefinite quadratic form of signature $(21,78)$, for we have already eliminated the degenerate off-diagonal block. Now, being indefinite, it admits non-zero isotropic vectors $\varv$ such that $\| \varv \| = 0$. In fact, the maximum number of linearly independent isotropic vectors is equal to its isotropy index, viz., $\min(21,78) = 21$.

Therefore, it is expedient to decompose $\so(7) \oplus \so(13)$ into a direct sum $V^{(3,3)} \oplus W$ of a totally isotropic subspace $V$ of dimension 21 and an anisotropic subspace $W$ of dimension 78. The vectors in $W$ drop out of the problem now because they are not massless, and we presume their mass to be so large that there cannot be any excitations of these fields at currently accessible energies. In {\S}\ref{chromodynamic_scale} below we shall estimate the scale of chromodynamical symmetry breaking and verify our surmise.

What about the totally isotropic subspace $V^{(3,3)}$? First of all, it is acted upon by the residual gauge symmetry with Lie algebra $\so(7)_+ \oplus \so(13)_-$ (where the subscript indicates whether the vectors in the relevant summand have positive or negative mass squared, respectively), for it may be written as the span of 28 linearly independent vectors of the form
\begin{equation}
	\varv_i = \varv_i^+ + \varv_i^-, \qquad (i=1,\ldots,21)
\end{equation}
where $\varv_i^+ \in \so(7)_+$ and $\varv_i^- \in \so(13)_-$. Thus, given rotations $R_+ \in SO(7)_+$ and $R_- \in SO(13)_-$ we have 
\begin{equation}
	\varv_i \mapsto R_+ \varv_i^+ R_+^{-1} + R_- \varv_i^- R_-^{-1}
\end{equation}
(remember that the $\varv_i^\pm$ are matrices in $\so(7,13)$ and therefore transform under the adjoint action). Thus, the residual gauge group $SO(7)_+ \otimes SO(13)_-$ acts by transforming vectors among the subspaces $V^\pm = \mathrm{span} (\varv_i^\pm: i=1,\ldots,21)$, respectively. Therefore, we are free to reduce the problem to the gauge equivalence classes by selecting a convenient representative for $V^{(3,3)}$. The easiest possibility to decide upon would be the following:
\begin{equation}
	\varv_{ij} = X_{i+8,j+8}+X_{i+15,j+15}, \qquad (1 \le i < j \le 7).
\end{equation}
Thus, $V^{(3,3)} = \mathrm{span}~ \varv_{ij} = \so(7)$ describes the gauge degrees of freedom in the (3,3)-sector left over after we fix this choice of the totally isotropic subspace.

\subsubsection{Identification of the Chromodynamical Degrees of Freedom}\label{identification_of_chromodynamics}

In {\S}\ref{reduction_to_isotropic} we have seen that the residual gauge symmetry in the (3,3)-sector must fall within the totally isotropic subspace $V^{(3,3)}$ which is isomorphic to $\so(7)$. At this point, another consideration becomes pertinent. Remember that the full initial gauge algebra for the Yang-Mills field equation without Proca term contains all sectors $A^{(|\alpha|,|\beta|)}$ with $|\alpha|,|\beta|=1,2,3$. Therefore we cannot treat the (3,3)-sector in isolation but must look at how it interacts with the others. Evidently the $A^{(1,1)}$, $A^{(2,2)}$ and $A^{(3,3)}$ blocks commute among themselves. But they are not completely independent of one another since they can couple indirectly through the off-diagonal sectors. Thus, we have to inspect more closely how the diagonal sectors couple to the off-diagonal.

To state the problem in some generality, consider the Lie group $SO(n+m)$ along with its subgroups $SO(n)$ resp. $SO(m)$ contained in the upper left resp. lower right $n \times n$ resp. $m \times m$ diagonal blocks. At the Lie algebra level, we have $\so(n) \oplus \so(m) \subset \so(n+m)$ with the obvious embedding. Consider now the span $T$ of the Lie algebra generators lying in the off-diagonal $n \times m$ block. Now, $T$ does not form an invariant subspace with respect to the adjoint action of $SO(n+m)$, but it is invariant under the factors $SO(n)$ resp. $SO(m)$ acting individually. In fact, consider $x \in T_1$ of the form $x = \sum_{j=n+1,\ldots,n+m} x_j X_{1j}$; i.e., it occupies the uppermost row in the upper right block and the leftmost row in the lower left block inside $\so(n+m)$. What strikes us as a surprising fact is the following: while $SO(m)$ acts on $\so(m)$ via its adjoint action $y \mapsto A y A^{-1}$, it acts on $T_1$ directly via its left action $x \mapsto A x$, not, that is to say, via the adjoint action. The same applies to the other row resp. columns inside $T$; i.e., $T$ decomposes into a direct sum of $n$ subspaces of dimension $m$ each, namely, $T = T_1 \oplus \cdots \oplus T_n$, which individually are invariant under the $SO(m)$ action.

This simple observation, obvious enough in itself, turns out to be crucial to our problem. For, as always, we require preservation of the Proca term and it will not be the case that all of the initial gauge degrees of freedom do preserve it. Our present concern is to find out what happens inside the (3,3)-sector. 

The first matter to be dealt with will be to describe the structure of the (1,3) and (2,3) sectors. The (1,3)-sector has $4 \cdot 20 = 80$ dimensions which we will write as the span
\begin{equation}
	A^{(1,3)} = \spn \left( b_{ij} X_{i+8,j}, 1 \le i \le 20, 1 \le j \le 4 \right)
\end{equation}
and its contribution to the Proca term is given by
\begin{align}
	\Delta \mathscr{L}_{(1,3)} &=  \frac{1}{2} \pi \varkappa \lambda^4 B^2 ~\mathrm{tr}~ h A^{(1,3)} A^{(1,3)} \nonumber \\
	&= \frac{1}{2} \pi \varkappa \lambda^4 B^2 \left( \sum_{1 \le i \le 7, 1 \le j \le 4} b_{ij}^2 - \sum_{8 \le i \le 20, 1 \le j \le 4} b_{ij}^2 \right).
\end{align}
Here, we have a non-degenerate quadratic form of signature (28,52). A totally isotropic subspace $V^{(1,3)}$ would have dimension 28 but the other 52 massive modes do \textit{not} fall out of the problem altogether, since, as we shall see in {\S}\ref{chromodynamic_scale} below, their estimated masses are expected to be comparable to the electroweak scale. Therefore, it is conceivable that they \textit{could} figure in physics well below the Planck regime. Nevertheless, it is also predicted that any interactions they mediate must be just as weak as those mediated by the comparably massive intermediate vector bosons in the $\su(2)$ sector, viz. the $Z^0$ and the $W^\pm$, which may explain why they have gone undetected up to now. 

The (2,3)-sector, on the other hand, has dimension 80 as well and equals the span
\begin{equation}
	A^{(2,3)} = \spn \left( c_{ij} X_{i+8,j+4}, 1 \le i \le 20, 1 \le j \le 4 \right).
\end{equation}
We see from this that the (2,3)-sector has 21 positive modes, 13 negative modes and 46 zero modes, leaving its contribution to the Proca term as follows: 
\begin{align}
	\Delta \mathscr{L}_{(2,3)} &= \pi \varkappa \lambda^4 B^2 ~\mathrm{tr}~ h A^{(2,3)} A^{(2,3)} \nonumber \\
	&= \pi \varkappa \lambda^4 B^2 \left( - \sum_{8 \le i \le 20} c_{i1}^2 + \sum_{1 \le i \le 7, 2 \le j \le 4} c_{ij}^2 \right).
\end{align}
In the next section, we shall show how to eliminate the degeneracy spanned by the 34 zero modes. For now, consider only the span of the remaining 7 positive and 39 negative modes forming an indefinite quadratic form of signature (7,39). As previously with the (1,3) and (3,3)-sectors, we go to a totally isotropic subspace $V^{(2,3)}$ of dimension 7 and eliminate the other 39 massive modes. According to the estimate in {\S}\ref{chromodynamic_scale} below, moreover, the massive modes in the $(2,2)$ sector must be entirely negligible in physics well below the Planck regime.

We must take care to define $V^{(2,3)}$ in such a way as to preserve the unbroken electromagnetic $\u(1)_Y$ symmetry explicitly. For the action of the electromagnetic gauge algebra $\u(1)_Y$ receives, in view of equation (\ref{electroweak_sector}), a representation of the following form. From {\S}\ref{spont_symm_breaking}, we have the definitions
\begin{align}
	B_0 &= \cos \theta_W \gamma - \sin \theta_W Z \nonumber \\
	A_0 &= \sin \theta_W \gamma + \cos \theta_W Z,
\end{align}
whence 
\begin{align}
	2 A_0 - B_0 &= \left( 2 \sin \theta_W - \cos \theta_W \right) \gamma + \left(2 \cos \theta_W + \sin \theta_W \right) Z = \sqrt{5} Z \nonumber \\
	2 A_0 + B_0 &= \left( 2 \sin \theta_W + \cos \theta_W \right) \gamma + \left( 2 \cos \theta_W - \sin \theta_W \right) Z = \frac{4}{\sqrt{5}} \gamma + \frac{3}{\sqrt{5}} Z.
\end{align}
Hence, $\gamma$ itself appears only in the 23 and 32 components in equation (\ref{electroweak_sector}) and generates the electromagnetic $U(1)_Y$ via
\begin{equation}
	\begin{pmatrix} \cos \frac{4}{\sqrt{5}} \gamma & - \sin \frac{4}{\sqrt{5}} \gamma \\ \sin \frac{4}{\sqrt{5}} \gamma & \cos \frac{4}{\sqrt{5}} \gamma \\ \end{pmatrix}
\end{equation}
in the 6-th and 7-th rows and columns of $\so(11,17)$ with unit entries on the rest of the diagonal. We want $\tr ~h V^{(2,3)} V^{(2,3)}$ to be invariant under this action. The following definition evidently satisfies this stipulation:
\begin{equation}
	V^{(2,3)} = \spn \left( \varv_i = X_{i+15,5} + \frac{1}{\sqrt{2}} X_{i+8,6} + \frac{1}{\sqrt{2}} X_{i+8,7}, 1 \le i \le 7 \right).
\end{equation}

Now, the $\so(7)$ gauge algebra in the (3,3)-sector acts on $V^{(1,3)}$ and $V^{(2,3)}$ in the manner just outlined above. In order to picture the geometrical situation, it will be helpful to regard $V^{(1,3)}$ resp. $V^{(2,3)}$ as isomorphic to the span $\imo$ of the pure imaginary octonions (we refer to the review by Baez \cite{baez}, who covers the basic properties relating to the octonions we quote without further justification in what follows). In octonionic terms, the gauge Lie algebra in question has the structure
\begin{equation}
	\so(7) \cong \der(\vvmathbb{O}) \oplus \mathrm{ad}_{\imo} \cong \g_2 \oplus \mathrm{ad}_{\imo},
\end{equation}
where $\der(\vvmathbb{O})$ are the derivations of the algebra of octonions and $\mathrm{ad}_{\imo}$ indicates the transformation for every $a \in \imo$,
\begin{equation}
	\mathrm{ad}_a = L_a - R_a,
\end{equation}
with $L_a$ and $R_a$ denoting the linear transformation of $\imo$ into itself given by multiplication from the left resp. from the right by $a$. Our strategy will be to reduce $\so(7)$ to the exceptional Lie algebra $\g_2$ and thence to a copy of $\su(3)$ sitting inside $\g_2$. These subgroups of $\so(7)$ have the following geometrical interpretation: $\g_2$ generates rotations $g$ of $\imo$ that preserve the 7d cross-product in the sense that $ga \times gb = g (a \times b)$ for all $a,b \in \imo$. For every element $a \in \imo$, its stabilizer subgroup is generated by a copy of $\su(3) \subset \g_2$.

Now, we wish to connect these abstract notions with the concrete constructions we have been pursuing. The reason the cross-product in $\imo$ is pertinent to our problem is that it is yielded by the action of our restricted gauge Lie algebra $\so(7)$ on $V^{(1,3)}$ resp. $V^{(2,3)}$. To employ a neutral notation, we will refer to one or the other totally isotropic subspaces as $\imo$. Given $X \in \mathrm{ad}_{\imo}$, one may readily check that for $Y \in \imo$,
\begin{equation}
	\mathrm{ad}_X Y = [X,Y] = X \times Y,
\end{equation}
where in the last equation we treat $X, Y$ as lying in $\imo$. It will be instructive to exhibit the identity $[X,Y] = X \times Y$ explicitly, for it winds up being the heart of the matter. In order to do so, we must first define the cross-product in $\imo$. 

To define the cross-product, one has recourse to the multiplication table of unit octonions $1, e_1, e_2, e_3, e_4, e_5, e_6, e_7$, our choice of which we reproduce here:
\begin{equation}
	\left[
	\begin{array}{c|ccccccc}
		1 & e_1 & e_2 & e_3 & e_4 & e_5 & e_6 & e_7 \\
		\hline
		e_1 & -1 & e_3 & -e_2 & e_5 & -e_4 & -e_7 & e_6 \\
		e_2 & -e_3 & -1 & e_1 & e_6 & e_7 & -e_4 & -e_5 \\
		e_3 & e_2 & -e_1 & -1 & e_7 & -e_6 & e_5 & -e_e \\
		e_4 & -e_5 & -e_6 & -e_7 & -1 & e_1 & e_2 & e_3 \\
		e_5 & e_4 & -e_7 & e_6 & -e_1 & -1 & -e_3 & e_2 \\
		e_6 & e_7 & e_4 & -e_5 & -e_2 & e_3 & -1 & -e_1 \\
		e_7 & -e_6 & e_5 & e_4 & -e_3 & -e_2 & e_1 & -1 \\
	\end{array}
	\right].	
\end{equation}
In octonionic terms, the cross-product is given by $a \times b = ab + \langle a, b \rangle$. For unit imaginary octonions one can read off the products from the multiplication table and just ignore the scalar part $\langle a, b \rangle$. When one does so, one obtains the cross-product restricted to $\imo$ in terms of components as follows:
\begin{equation}\label{cross_product}
	\begin{pmatrix} a_1 \\ a_2 \\ a_3 \\ a_4 \\ a_5 \\ a_6 \\ a_7 \\ \end{pmatrix} 
	\times
	\begin{pmatrix} b_1 \\ b_2 \\ b_3 \\ b_4 \\ b_5 \\ b_6 \\ b_7 \\ \end{pmatrix} =
	\begin{pmatrix} 
		a_2 b_3 - a_3 b_2 - a_5 b_4 + a_4 b_5 + a_7 b_6 - a_6 b_7  \\
		- a_1 b_3 + a_3 b_1 - a_6 b_4 + a_4 b_6 - a_7 b_5 - a_5 b_7  \\
		a_1 b_2 - a_2 b_1 + a_4 b_7 - a_7 b_4 + a_6 b_5 - a_5 b_6  \\
		- a_1 b_5 + a_5 b_1 - a_2 b_6 + a_6 b_2 - a_3 b_7 + a_7 b_3 \\
		a_1 b_4 - a_4 b_1 - a_2 b_7 + a_7 b_2 + a_3 b_6 - a_6 b_3  \\
		a_1 b_7 - a_7 b_1 + a_2 b_4 - a_4 b_2 - a_3 b_5 + a_5 b_3  \\
		- a_1 b_6 + a_6 b_1 + a_2 b_5 - a_5 b_2 + a_3 b_4 - a_4 b_3  \\
	\end{pmatrix}.  
\end{equation}
We single out the 7-dimensional subspace $\mathrm{ad}_{\imo} \subset \so(\imo)$ as follows:
\begin{align}\label{adjoint_action}
	A &= a ~\mathrm{ad}_{e_1} + b ~\mathrm{ad}_{e_2} + c ~\mathrm{ad}_{e_3} + d ~\mathrm{ad}_{e_4} + e ~\mathrm{ad}_{e_5} + f ~\mathrm{ad}_{e_6} + g ~\mathrm{ad}_{e_7} \nonumber \\
	&= \begin{pmatrix}
		0 & c & -b & e & -d & -g & f \\
		-c & 0 & a & f & g & -d & -e \\
		b & -a & 0 & g & -f & e & -d \\
		-e & -f & -g & 0 & a & b & c \\
		d & -g & f & -a & 0 & -c & b \\
		g & d & -e & -b & c & 0 & -a \\
		-f & e & d & -c & -b & a & 0 \\
	\end{pmatrix}
\end{align}
Frequently in what follows we shall identify $\mathrm{ad}_{\imo}$ with $\imo$ itself via $\mathrm{ad}_{e_i} = e_i$. Since $\so(\imo)$ is 21-dimensional, each coefficient appears three times in equation (\ref{adjoint_action}). With this description, we have realized the decomposition
\begin{equation}
	\so(\imo) \cong \g_2 \oplus \spn~ A.
\end{equation}

\begin{proposition}
	The left action of $A$ on $\imo$ is equivalent to the cross-product $\varv \mapsto A \times \varv$. 
\end{proposition}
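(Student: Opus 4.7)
The plan is to verify the identity by direct componentwise computation in the standard basis $e_1, \ldots, e_7$ of $\imo$. The conceptual content behind the equivalence is already encoded in the multiplication table displayed just above equation (\ref{cross_product}): inside the pure imaginary octonions, one has $ab = -\langle a, b \rangle + a \times b$, so the commutator $ab - ba$ isolates the imaginary part and equals $2(a \times b)$. Up to the normalization chosen in the display of the matrix $A$ in equation (\ref{adjoint_action}), this observation is exactly the identity being asserted, and an explicit check in coordinates suffices to pin down the convention.

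First I would write $\varv = v_1 e_1 + \cdots + v_7 e_7$ and expand the column vector $A \varv$ row by row using the matrix in equation (\ref{adjoint_action}). This produces seven linear forms in $v_1, \ldots, v_7$ with coefficients drawn from the seven parameters $a, b, c, d, e, f, g$. Next I would compute $A \times \varv$ directly from equation (\ref{cross_product}) under the identifications $(a_1, \ldots, a_7) = (a, b, c, d, e, f, g)$ and $(b_1, \ldots, b_7) = (v_1, \ldots, v_7)$, which produces a second set of seven linear forms in the $v_j$. The proposition then follows by matching the two columns component by component.

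The main obstacle is purely the bookkeeping of signs: there are forty-two nonzero matrix entries in equation (\ref{adjoint_action}) to pair against forty-two terms in equation (\ref{cross_product}), and a slip of an index convention would be easy to miss. To keep the verification tractable, I would use linearity to reduce to the case $A = \mathrm{ad}_{e_k}$ for each $k = 1, \ldots, 7$ separately, so that only a single parameter is nonzero at a time; each of the seven sub-checks then amounts to comparing the $k$-th row of the multiplication table with the coefficient pattern of the corresponding summand of equation (\ref{adjoint_action}). As a further cross-check, one can exploit the $\g_2$-symmetry of both sides: since $\mathrm{Aut}(\vvmathbb{O}) = G_2$ acts compatibly on $\mathrm{ad}_{\imo}$ and on the cross-product, only a representative from each $G_2$-orbit in the index set needs to be verified by hand, after which the remaining components are automatic. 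This reduces what looks like an arithmetic slog to just a handful of genuine checks.
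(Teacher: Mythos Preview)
Your proposal is correct and takes essentially the same approach as the paper: a direct componentwise verification from equations (\ref{cross_product}) and (\ref{adjoint_action}), backed by the conceptual remark that the cross product is the imaginary part of octonionic multiplication. The paper's own proof is a two-sentence appeal to exactly this computation and this observation; your additional organizational suggestions (reducing by linearity to $A=\mathrm{ad}_{e_k}$ and invoking the $G_2$-symmetry as a cross-check) are sensible embellishments but not a different route.
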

\begin{proof}
	One can readily verify the statement by direct computation from equations (\ref{cross_product}) and (\ref{adjoint_action}). The reason it works, of course, is that by construction the cross-product is supposed to be the non-scalar part of octonionic left multiplication\footnote{Namely, for pure imaginary octonions $a$ and $b$, the cross product satisfies the identity $a \times b = ab + \langle a, b \rangle$.} and, by the observation above in the second paragraph of this section, in our set-up $A$ acts as an operator on $\imo$ by multiplication from the left, where its entries may be inferred from the octonionic multiplication table.
\end{proof}
To find the generators of the 14-dimensional subalgebra $\g_2$, one has to select two orthogonal spaces in each coefficient in a manner consistent with the unit octonionic multiplication table. The procedure we shall follow is described by Dray and Manogue \cite{dray_manogue}. To define an automorphism of the pure imaginary octonions, one starts with a basic triple $e_1, e_2, e_3$ and supplements it with another unit, which we take to be $e_4$. Relative to this choice, one constructs $\g_2$ in the following manner.

For each $e_k$, $k=1,\ldots,7$, one can view it geometrically as generating a rotation of the planes spanned by $e_i$ and $e_j$ whenever $e_i e_j = e_k$ (where the order of the two factors specifies the counter-clockwise direction). Among the three elements in equation (\ref{adjoint_action}), one of them will always involve $e_4$. Thus, the other two correspond to a preferred pair of planes (for $k=4$ one may select these at will). Define $A_k$ as generating rotations by equal and opposite amounts with respect to this preferred pair and define $G_k$ as generating rotations by equal amounts about each of the two in the preferred pair and by twice as much in the opposite direction about the third element. This procedure gives $\g_2$ as the span
\begin{equation}
	\g_2 = \spn~ \left( A_1, A_2, A_3, A_4, A_5, A_6, A_7, G_1, G_2, G_3, G_4, G_5, G_6, G_7 \right),
\end{equation} 
where in components we have
\begin{align}
	a A_1 + b A_2 + c A_3 + d A_4 + e A_5 + f A_6 + g A_7 & =
	\begin{pmatrix}
		0 & c & -b & 0 & -d & -g & f \\
		-c & 0 & a & 0 & -g & d & -e \\
		b & -a & 0 & 0 & f & -e & 0 \\
		0 & 0 & 0 & 0 & 0 & 0 & 0 \\
		d & g & -f & 0 & 0 & c & -b \\
		g & -d & e & 0 & -c & 0 & a \\
		-f & e & 0 & 0 & b & -a & 0 \\
	\end{pmatrix} \\
	a G_1 + b G_2 + c G_3 + d G_4 + e G_5 + f G_6 + g G_7 & =
	\begin{pmatrix}
		0 & c & -b & -2e & -d & -g & f \\
		-c & 0 & a & -2f & g & -d & -e \\
		b & -a & 0 & -2g & -f & e & 2d \\
		2e & 2f & 2g & 0 & -2a & -2b & -2c \\
		d & -g & f & 2a & 0 & -c & -b \\
		g & d & -e & 2b & c & 0 & -a \\
		-f & e & -2d & 2c & b & a & 0 \\
	\end{pmatrix}.
\end{align}
Now, $\g_2$ is by definition the space of derivations of $\vvmathbb{O}$, or equivalently, consists of infinitesimal automorphisms of $\vvmathbb{O}$. The derivation property may be written out, for $X \in \g_2$ and $A, B \in \imo$, thus:
\begin{equation}\label{derivation_property}
	X (AB) = (XA)B + A(XB).
\end{equation}
But $\g_2$ may equivalently be regarded as the subspace of $\so(\imo)$ that preserves the cross-product, or, in other words, equation (\ref{derivation_property}) is equivalent to
\begin{equation}\label{derivation_cross_prod}
	X ( A \times B) = (XA) \times B + A \times (XB)	
\end{equation}
for any $X \in \g_2$ and for all $A, B \in \imo$. For $X \in \mathrm{ad}_\imo$, on the other hand, the derivation property will not hold in general.

\begin{proposition}\label{reduce_from_so7_to_g2}
	If $\mathrm{ad}_{\imo}$ were part of the gauge Lie algebra, it would lead to an inconsistency.
\end{proposition}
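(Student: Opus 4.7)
The plan is to show that an $\mathrm{ad}_{\imo}$ contribution to the gauge Lie algebra is incompatible with the structural requirement that gauge transformations preserve the cross product on $\imo$, through which the action of $\so(7)$ on the totally isotropic subspaces $V^{(1,3)}$ and $V^{(2,3)}$ is realized. First, I would recall that by the proposition just above, every $X \in \mathrm{ad}_{\imo}$ acts on $v \in \imo$ by $v \mapsto X \times v$. Consequently the gauge-covariant exterior derivative $\text{\TH} = \text{\th} + A\wedge$ applied to a $V$-valued field uses the cross product in the $A$-coupling, and the field strength $F = \text{\th} A + A\wedge A$ relies on commutators of values of $A$ evaluated through the same cross product. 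Demanding that $F$ still generate a well-defined infinitesimal action on $V$ amounts to requiring that the bracket of the gauge algebra be compatible with the cross product --- which is nothing but the derivation property (\ref{derivation_cross_prod}), the defining property of $\g_2 \subset \so(\imo)$.

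Second, I would exhibit a concrete counterexample showing that $\mathrm{ad}_{\imo}$ fails this property. Taking $a = e_1$, $A = e_2$, $B = e_4$, one reads off from the multiplication table that $A\times B = e_6$, $a\times A = e_3$ and $a\times B = e_5$, whence $a\times(A\times B) = e_1 \times e_6 = -e_7$, while $(a\times A)\times B + A\times(a\times B) = e_3\times e_4 + e_2\times e_5 = 2 e_7$. The two sides differ by $3 e_7 \ne 0$, the discrepancy tracing directly to the non-associativity of $\vvmathbb{O}$; and the same kind of obstruction persists throughout $\mathrm{ad}_{\imo}$.

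Third, I would convert this algebraic failure into the announced inconsistency. Under an infinitesimal gauge transformation generated by $X \in \mathrm{ad}_{\imo}$, any bilinear object built from the cross product --- the commutator term $A\wedge A$ in the field strength, the bracket implicit in how the three totally isotropic subspaces $V^{(1,3)}, V^{(2,3)}, V^{(3,3)}$ transform into one another, and ultimately the Killing form that underlies the Proca mechanism in {\S}\ref{spont_symm_breaking} --- transforms inconsistently: its variation fails to equal the variation computed by the Leibniz rule. Equivalently, the commutator of two elements of $\mathrm{ad}_{\imo}$ generically leaves $\mathrm{ad}_{\imo}$ and produces a $\g_2$ component, so the would-be sub-gauge-algebra does not close on its own; adjoining $\mathrm{ad}_{\imo}$ to the unbroken $\g_2$ on an equal footing reintroduces precisely the rotations of $\imo$ that fail to preserve the cross product used to define the gauge action. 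The only subalgebra of $\so(7)$ avoiding this contradiction is $\g_2$ itself, which completes the argument.

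The main obstacle, as I see it, lies in the third step: pinning down which formulation of gauge covariance is being violated with mathematical sharpness. The algebraic fact --- the failure of the Leibniz rule for $\mathrm{ad}_{\imo}$ --- is an elementary octonionic computation, but its translation into a definitive inconsistency within the variational formulation of the post-Einsteinian action depends on which aspect one takes as primitive: invariance of the Yang--Mills Lagrangian under $\mathrm{ad}_{\imo}$, closure of the transformation law for $F$, or preservation of the octonionic cross-product structure that canonically identifies both $V^{(1,3)}$ and $V^{(2,3)}$ with $\imo$. All three routes lead to the same conclusion, but care is required to choose the cleanest one without smuggling in ad hoc conventions.
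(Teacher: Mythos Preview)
Your approach is essentially the same as the paper's: both hinge on the failure of the derivation property (\ref{derivation_cross_prod}) for elements of $\mathrm{ad}_{\imo}$, and both convert that algebraic failure into an inconsistency of the gauge action on $\imo$. Your concrete counterexample with $e_1, e_2, e_4$ is a perfectly valid substitute for the paper's more abstract choice of a nonzero $V \in \imo$ and $X \in V^\perp$; arguably it is cleaner, since the paper's argument silently juggles the identifications between $\imo$, $\mathrm{ad}_{\imo}$, and matrices in $\so(7)$.

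Where you and the paper diverge is precisely the point you flag as an obstacle. The paper commits to one specific form of the inconsistency: it compares the result of applying two successive infinitesimal gauge transformations $A$ then $X$ to a vector $V$ (giving $XAV$) against applying their commutator $[X,A]$ (giving $XAV - AXV$), and shows that these differ because $AXV \ne 0$. This is a sharp, self-contained contradiction between two ways of computing the same gauge variation. Your third step, by contrast, gestures at several candidate inconsistencies (non-closure of $\mathrm{ad}_{\imo}$ under bracket, failure of the Leibniz rule for $F$, non-invariance of the Killing form) without selecting one and carrying it through. The non-closure observation, in particular, is not by itself a contradiction: $\mathrm{ad}_{\imo}$ was never claimed to be a subalgebra, only a summand of $\so(7)$. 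What you need is the paper's move: show that the \emph{action on $V$} computed two legitimate ways disagrees. Your ingredients suffice for this, but you should commit to that formulation rather than leaving the choice open.
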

\begin{proof}
	Suppose $V \ne 0 \in \imo$. Take any $X \in V^\perp$ so that $X \times V \ne 0$.
	Then we may find an $A \in \mathrm{ad}_{\imo}$ such that
	\begin{equation}
		X ( A \times V) \ne XA \times V + A \times XV.
	\end{equation}
	But by reason of the identifications in place and the associativity of matrix multiplication, $X (A \times V) = X(AV) = (XA)V = (XA) \times V - \langle XA, V \rangle$. Therefore, $A \times XV \ne - \langle XA , V \rangle$ viewed as octonions. Then we can project both sides onto $\imo$ to conclude that $A \times XV \ne 0$. Now we can envisage the potential gauge transformation from two points of view. First, one could start from $V$ and apply the gauge transformation $A$ followed by the gauge transformation $X$, which would lead to an infinitesimal change in $V$ by the amount $XAV$. Second, one could start from the perfectly legitimate (under the assumption) gauge transformation $[X,A]$ which applied to $V$ results in an infinitesimal change by $[X,A]V = XAV - AXV$. But we have just seen that $AXV \ne 0$, contradiction.
\end{proof}
Therefore, the admissible gauge Lie algebra reduces from $\so(\imo)$ to $\g_2$. 
As a strictly mathematical statement, the restricted gauge algebra also has to satisfy the Jacobi identity in itself. That is, we can take the commutator inside $A^{(3,3)}$ or act on $V^{(1,3)}$ resp. $V^{(2,3)}$ first. Consequently, with $X, Y \in A^{(3,3)}$ and $Z \in V^{(2,3)}$ the Jacobi identity, viz.,
\begin{equation}\label{jacobi_id}
	[[X,Y],Z] = [X,[Y,Z]] - [Y,[X,Z]]
\end{equation}
must hold as a consistency condition. 

\begin{proposition}\label{reduce_from_g2_to_su3}
	If $X \in \g_2$, the Jacobi identity implies an inconsistency unless $XZ = 0$ where $Z \in \imo$ indicates the value of the gauge potential in $V^{(2,3)} \subset A^{(2,3)}$.
\end{proposition}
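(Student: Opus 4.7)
The plan is to mimic the strategy of Proposition \ref{reduce_from_so7_to_g2}, trading the failure of the derivation property there for a failure of the Jacobi identity \eqref{jacobi_id} here. Fix $X, Y \in \g_2$ and let $Z \in V^{(2,3)} \cong \imo$ denote the value of the gauge potential in the (2,3)-sector. Each bracket appearing in \eqref{jacobi_id} must be interpreted according to the identifications already in place: $[X, Y]$ is the commutator inside $\g_2 \subset \so(\imo)$, which is a legitimate element of the admissible gauge algebra by Proposition \ref{reduce_from_so7_to_g2}; every bracket of the form $[W, Z]$ with $W \in \g_2$ represents the action of $W$ on the value $Z$, which under the identification $V^{(2,3)} \cong \imo$ amounts to the matrix-on-vector product $WZ \in \imo$.

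The first step is to expand both sides of \eqref{jacobi_id}. The left-hand side reduces to $([X,Y])Z = XYZ - YXZ$. For the right-hand side one iterates: $[Y, Z] = YZ \in \imo$, then $[X, [Y,Z]]$ is the further action of $X$ on $YZ$, and similarly for the $Y$--$X$ terms. To expose a nontrivial constraint, one cannot stop at the purely matricial level --- there the Jacobi identity is a tautology by associativity of matrix multiplication. Instead, following the logic of Proposition \ref{reduce_from_so7_to_g2}, I would re-express the action of $\g_2$ on $\imo$ through left octonion multiplication (using $\mathrm{ad}_A V = A \times V$ modulo the scalar part, as in the proof of that proposition) so as to bring the cross-product structure of $\imo$ into the comparison.

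The main obstacle, as in Proposition \ref{reduce_from_so7_to_g2}, is the non-associativity of octonion multiplication: the associator $[A, B, V] = (AB) V - A (BV)$ is a nontrivial alternating trilinear form on $\vvmathbb{O}$. My expectation is that, once the two sides of \eqref{jacobi_id} are rewritten in octonionic form, their difference reduces to an expression proportional to the associator evaluated with one argument equal to $Z$. Since $Y$ may be chosen freely in $\g_2$, the only uniform way to force this discrepancy to vanish is the pointwise condition $XZ = 0$.

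Granting this, the admissible generators $X \in \g_2$ are exactly those lying in the stabilizer of the fixed $Z \in \imo$ under the standard seven-dimensional representation of $\g_2$, and this stabilizer is well known to be a copy of $\su(3) \subset \g_2$. The reduction to $\su(3)$ would then complete the identification of the residual gauge algebra at third order in the jets with chromodynamical $\su(3)$, mirroring the mechanism at second order that produced $\u(1)_Y \oplus \su(2)$.
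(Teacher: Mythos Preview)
Your proposal has a genuine gap in the choice of $Y$. You take both $X, Y \in \g_2$, and you correctly observe that in that case the Jacobi identity \eqref{jacobi_id} is a tautology at the matricial level. Your proposed cure --- re-expressing the $\g_2$-action on $\imo$ via left octonion multiplication and the cross product --- cannot work, because elements of $\g_2$ are precisely the derivations of $\vvmathbb{O}$ and do \emph{not} act on $\imo$ by $a \mapsto A \times a$; that formula characterizes the complementary summand $\mathrm{ad}_{\imo}$ in $\so(\imo) = \g_2 \oplus \mathrm{ad}_{\imo}$. So there is no octonionic re-expression available that would introduce an associator discrepancy when both arguments lie in $\g_2$.

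The paper's proof instead takes $X \in \g_2$ but $Y \in \mathrm{ad}_{\imo}$. Then $Y$ genuinely acts by the cross product, $YZ = Y \times Z$, and since $[\g_2,\mathrm{ad}_{\imo}] \subset \mathrm{ad}_{\imo}$ the left-hand side $[X,Y]Z$ is also a cross product $(XY)\times Z - (YX)\times Z$. On the right-hand side one uses the derivation property \eqref{derivation_cross_prod} of $X$ to write $X(Y\times Z) = (XY)\times Z + Y\times(XZ)$, and after cancellation one is left with $(YX)\times Z = 0$, i.e.\ $Y\times(XZ)=0$ for all $Y \in \imo$. Choosing $Y$ orthogonal to $XZ$ then forces $XZ=0$. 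The mechanism is therefore the interplay between a derivation $X$ and a cross-product operator $Y$, not the octonionic associator; your setup with $Y \in \g_2$ never accesses this.
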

\begin{proof}
	In view of the observation above in the second paragraph of this section, however, we are free to treat the carrier space of the gauge Lie algebra as isomorphic to $\imo$, in which case we write $X,Y \in \mathrm{so}(\imo)$, and regard $Z$ itself as lying in $\imo$. Then the consistency condition (\ref{jacobi_id}) becomes for $X \in \g_2$ and $Y \in \mathrm{ad}_{\imo}$,
	\begin{align}
		[X,Y]Z &= XYZ - YXZ \nonumber \\
		&= X (Y \times Z) - Y \times (XZ) \nonumber \\
		&= (XY) \times Z + Y \times (XZ) - Y \times (XZ) \nonumber \\
		&= (XY) \times Z.
	\end{align}
	To go from the second to third lines, use equation (\ref{derivation_cross_prod}). But the left hand side reads
	\begin{equation}
		[X,Y]Z = [X,Y] \times Z = (XY) \times Z - (YX) \times Z,
	\end{equation}
where we use the fact that
\begin{equation}
[g_2,g_2] \subset g_2; \qquad [g_2, \mathrm{ad}_{\imo}] \subset \mathrm{ad}_{\imo}; \qquad 
[ \mathrm{ad}_{\imo}, \mathrm{ad}_{\imo} ] \subset g_2.
\end{equation}
Therefore, we must have that 
\begin{equation}
(YX) \times Z = (YX)Z = Y(XZ) = Y \times (XZ) = 0.
\end{equation}
If $XZ \ne 0$, take $Y \in XZ^\perp \subset \imo$ so that $Y \times (XZ) \ne 0$, contradiction. For instance, $Y=Z$ would work. Thus we conclude that for the sake of consistency we must have $XZ = 0$. 
\end{proof}
Apart from the singular case $Z=0$, this implies that $Z$ is fixed under the action of $X \in \g_2$. But the set of elements in $\g_2$ fixing a given non-zero element $Z \in \imo$ is isomorphic to $\su(3)$. Therefore, we have reduced the gauge Lie algebra from $\g_2$ down to a copy of $\su(3) \subset \so(\imo)$. We can stop here because $\su(3)$ closes on itself under the Lie bracket. The copy of $\su(3)$ inside $\so(\imo)$ so determined may depend on position in space. That is, the isomorphism between $V^{(2,3)} \cong \imo$ in the formulae above should be chosen in a position-dependent manner such that $e_4$ is parallel to $V \in V^{(2,3)}$ everywhere in space-time.

\begin{corollary}
	The effective gauge Lie algebra at third order in jets is isomorphic to a copy of $\su(3)$.
\end{corollary}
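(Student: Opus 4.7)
The plan is to assemble the corollary as a direct synthesis of the preceding reduction chain, since almost all the hard work has been localized in Propositions \ref{reduce_from_so7_to_g2} and \ref{reduce_from_g2_to_su3}. The skeleton of the argument is a descending sequence of gauge Lie algebras
\begin{equation*}
\so(7,13) \twoheadrightarrow \so(7)\oplus\so(13) \twoheadrightarrow \so(7) \twoheadrightarrow \g_2 \twoheadrightarrow \su(3),
\end{equation*}
and what I need to do is to justify each arrow by citing the right earlier result and checking that the reductions are compatible with one another (i.e.\ that at each stage the residual algebra still closes and still preserves what it is required to preserve).

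First I would recall from \S\ref{reduction_to_isotropic} that the Proca quadratic form forces us down from the initial $\so(7,13)$ to $\so(7)\oplus\so(13)$, because the off-diagonal blocks fail to preserve $\Delta\mathscr{L}_{(3,3)}$ (cf.\ equation (\ref{rotated_proca})). Then, since the Killing form on $\so(7)\oplus\so(13)$ has signature $(21,78)$, restriction to a totally isotropic subspace of maximal dimension $21$ leaves us with a copy of $\so(7)$, identified via the isomorphism with $\so(\imo)$ used throughout \S\ref{identification_of_chromodynamics}.

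Next I would invoke Proposition \ref{reduce_from_so7_to_g2} to eliminate $\mathrm{ad}_{\imo}$: using the decomposition $\so(\imo)\cong\g_2\oplus\mathrm{ad}_{\imo}$, any non-zero component along $\mathrm{ad}_{\imo}$ would violate the derivation property (\ref{derivation_cross_prod}) and hence the consistency of the action of $\so(7)$ on the totally isotropic subspaces $V^{(1,3)}$ and $V^{(2,3)}$, both of which are identified with $\imo$. So the gauge algebra must sit inside $\g_2$. Then, by Proposition \ref{reduce_from_g2_to_su3}, the Jacobi identity applied with $X\in\g_2$, $Y\in\mathrm{ad}_{\imo}$ and $Z\in V^{(2,3)}\cong\imo$ forces $XZ=0$, i.e.\ the admissible $X$ are precisely those elements of $\g_2$ that fix the chosen non-zero vector $Z$. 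It is a standard fact (Baez \cite{baez}) that the stabilizer subgroup of a non-zero element of $\imo$ inside $G_2$ is isomorphic to $SU(3)$, so its Lie algebra is $\su(3)$.

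Finally, I would verify that $\su(3)$ really is the end of the reduction, i.e.\ it closes on itself under the Lie bracket (which it does, being a Lie subalgebra of $\g_2$) and that our position-dependent identification $V^{(2,3)}\cong\imo$ can always be made so that the fixed direction $e_4$ coincides with $Z$ pointwise, as remarked at the end of \S\ref{identification_of_chromodynamics}. The only part of this program requiring genuine thought, rather than citation, is a sanity check that the two consecutive reductions $\so(7)\to\g_2\to\su(3)$ are compatible with the residual action on \emph{both} $V^{(1,3)}$ and $V^{(2,3)}$ simultaneously; the reason they are is that both totally isotropic subspaces have been identified with the same copy of $\imo$ and the stabilizer of a single non-zero vector is already enough to cut $\g_2$ down to $\su(3)$, so no further constraint arises from the second copy. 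The hardest conceptual step is really just understanding the octonionic identifications well enough to be sure that no additional reduction is forced; once that is in hand, the corollary follows by concatenating Propositions \ref{reduce_from_so7_to_g2} and \ref{reduce_from_g2_to_su3}.
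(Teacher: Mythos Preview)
Your proposal is correct and follows essentially the same approach as the paper: the corollary is not given an independent proof but is stated as the immediate consequence of concatenating the reduction chain $\so(7,13)\to\so(7)\oplus\so(13)\to\so(7)\to\g_2\to\su(3)$ established in \S\ref{reduction_to_isotropic} and Propositions \ref{reduce_from_so7_to_g2} and \ref{reduce_from_g2_to_su3}, together with the standard fact that the stabilizer in $\g_2$ of a non-zero element of $\imo$ is $\su(3)$ and the observation that $\su(3)$ closes under the bracket so no further reduction occurs. Your additional sanity check about compatibility with both $V^{(1,3)}$ and $V^{(2,3)}$ is a reasonable elaboration but not something the paper itself addresses explicitly.
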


\begin{remark}
If the chromodynamical gauge degrees of freedom were to be viewed as existing by themselves, in isolation, then propositions \ref{reduce_from_so7_to_g2} and \ref{reduce_from_g2_to_su3} would not apply. The reduction to a copy $\su(3)$ occurs only because of the consistency conditions that arise when the chromodynamical degrees of freedom in the $(3,3)$ sector are placed into juxtaposition with the rest of the jet affine connection 1-forms.
\end{remark}

\subsection{Estimate of the Scale of Chromodynamical Symmetry Breaking}\label{chromodynamic_scale}

To see most perspicuously where the mass terms at various orders in the jets come from, write out the Proca-Yang-Mills equation from {\S}\ref{chapter_8} in full as,
\begin{equation}\label{proca_yang_mills_chap_9}
	\text{\TH} * \text{\TH} A  +  2 \pi \varkappa \lambda^4 B^{|\alpha|+|\beta|-2} \alpha^{1/2} \varrho * A = - 2 \pi \varkappa \lambda^4 B^2 * J_1.
\end{equation}
The second term on the left goes as $M^2$. If we insert again the dimensional factors, we find for the mass scale of the gauge bosons (when they are not massless),
\begin{equation}\label{mass_scale_relation}
	M_{|\alpha|,|\beta|} = 2 \lambda^2 B^{\frac{1}{2}(|\alpha|+|\beta|)-1} \ell_P^{3-\frac{1}{2}(|\alpha|+|\beta|)} \alpha^{1/4} m_p.
\end{equation}
Here, the numerical coefficient is chosen so as to yield $M_W$ for $|\alpha|=|\beta|=2$. Thus, the expected mass scale in the low-lying sectors may be computed as given in Table 1.

\begin{table}[h!]
	\begin{center}
		\footnotesize	
		\caption{Mass scales of the low-lying sectors from equation (\ref{mass_scale_relation}).}
		\begin{tabular}{c c c d{1.5} c d{2.5} c}
			\noalign{\vskip 3mm}
			\hline\hline
			\noalign{\vskip 1mm}
			$|\alpha|$ & $|\beta|$ & $M_{|\alpha|,|\beta|}$ & \multicolumn{1}{c}{Mass scale} & & \multicolumn{1}{c}{Mass scale} & \cr
			& & & \multicolumn{1}{c}{[$m_P$]} & & \multicolumn{1}{c}{[GeV/$c^2$]} & \cr
			\noalign{\vskip 1mm}
			\hline
			\noalign{\vskip 2mm}
			1 & 2 & $M_{11}$ & 4.71485 & $\times 10^{-70}$ & 5.7831 & $\times 10^{-51}$ \cr
			1 & 3 & $M_{13}$ & 6.58364 & $\times 10^{-18}$ & 80.3790 & \cr
			2 & 2 & $M_{22}$ & 6.58364 & $\times 10^{-18}$ & 80.3790 & \cr
			2 & 3 & $M_{23}$ & 9.19315 & $\times 10^{34}$ & 1.12238 & $\times 10^{54}$ \cr
			3 & 3 & $M_{33}$ & 1.28370 & $\times 10^{87}$ & 1.56725 & $\times 10^{106}$ \cr
			\noalign{\vskip 2mm}
			\hline\hline
		\end{tabular}	
	\end{center}	
\end{table}
Therefore, the gauge fields in the $(1,2)$ sector remain effectively massless while the massive modes in the $(3,3)$ sector become far too massive for them to play any role in physics below the Planck regime, as was supposed above and has now been justified. The scale $M_{13}$, however, is of course the same as the electroweak scale in the $(2,2)$ sector. We shall comment in {\S}\ref{off_diagonal_sectors} below on what may be the physical role of the gauge fields in the off-diagonal sectors.

\subsection{Physical Significance of the Reduction to the Effective Gauge Lie Algebra}

We have now to clarify the \textit{physical} meaning of the restriction of the gauge Lie algebra implicit in propositions \ref{reduce_from_so7_to_g2} and \ref{reduce_from_g2_to_su3} as \textit{mathematical} statements. These reductions will been seen to correspond to an induction from experience, once their content in the relativistic context has been teased out, in analogy to what happens with the equivalence principle that Einstein emplaces as the ground of the transition from the special theory of relativity to the general theory. In particular, the implied limitation on the possible form matter may take on in the theory reveals itself as being very natural, just as much so as is the imposition of Lorentz covariance for tensors in the special theory itself.

\subsubsection{General Principles Governing the Process of Selection of the Effective Gauge Lie Algebra}\label{selection_effective_gauge_algebra}

At this point, we wish to model our procedure on what Einstein does when originally formulating the general theory of relativity, as has admirably been explicated by Norton (see \cite{einstein_grundlage}, \cite{norton_field_equ}, \cite{norton_physical_content}). Einstein's key realizations are first, that one can interpolate between the Lorentz covariance of the special theory of relativity and the full diffeomorphism group of the general theory; and second, that his equivalence principle enters as a means by which to solve the so-called problem of superfluous structure that could, to a na{\"i}ve observer, appear to be present at each intermediate stage. For instance, when one works in the usual Cartesian frame in special relativity, the level sets of the time coordinate specify hypersurfaces of simultaneity and the straight lines through each position in space determined by variation along the time coordinate deliver a set of congruences, yet none of these structures has fundamental significance in the full theory, as one can see right away by contemplating any non-linear coordinate transformation.

As we shall note in a moment, a similar situation obtains in our problem. But first let us comment that we have, in the context of infinitesimal geometry to higher order, a natural sense in which one can interpolate a sequence of enlargements of the symmetry group, from the ordinary Lorentz group all the way, in the limit, to the full diffeomorphism group. Namely, start with the Lorentz group to first order, then consider the Lorentz group to second order and so forth. Here, by an $r$-th order Lorentz transform we mean a coordinate mapping that is polynomial in the uniformizing Cartesian coordinates (given by polynomials up to the $r$-th degree) and that preserves the extended Minkowskian metric defined on jets up to the $r$-th order. A group structure results by composing any two coordinate mappings, modulo monomials of degree $r+1$ and higher. 

The definition supplies us with more than an idle mathematical curiosity, for it leads to a postulate having physical import:
\begin{postulate}[Principle of stepwise covariance]
	The structure of the gauge group of the full theory is to be analyzed by truncating to a given order and imposing covariance with respect to Lorentz transformations up to that order.
\end{postulate}
What may be the physical content of this principle? It preserves a role, at any given stage, for some of the so-called superfluous structure adverted to above in the sense that the Cartesian coordinates in the uniformizing frame continue to have relevance. The Cartesian frame and the uniformizing coordinates derived from it enjoy a privileged status as they tie the abstract realm of theory to something interpretable in terms of an idealization of experience. Of course, some of the superfluous structure will go away as soon as one proceeds to the next higher order, and, in the limit of arbitrarily high order, it should disappear completely. But one cannot jump directly from here to infinity! What would be an example of such superfluous structure? For instance, the distinction between an electric and a magnetic field. As one knows very well, this distinction is relativized in the special theory of relativity, since an electric resp. magnetic field with respect to one inertial frame may go over to having non-zero magnetic resp. electric components when viewed in another inertial frame (see Thirring \cite{thirring_vol_1}, {\S}5.2). This situation holds for the electromagnetic field at first order in the jets, and the right way to handle it is to introduce the field strength tensor as a geometrical invariant. Something similar happens at second order: a second-order Lorentz transform can convert an electromagnetic field into an electroweak field and vice versa. Thus, we seek the appropriate physical concepts by which to interpret the covariant meaning of the generalized field strength tensor at each successive higher order (that is to say, in Cartan's formalism, the curvature 2-forms $R^\alpha_\beta$).

To put the matter in other terms, what we are dealing with is a graduated series of enlargements of the notion of an inertial frame. A subsequent consideration enters at this juncture, which we encapsulate as our second postulate:

\begin{postulate}[Principle of ascent]
	The analysis of the effective structure of the gauge group at any given order should proceed by starting at lowest order and working one's way up order by order.	
\end{postulate}
Thus, with respect to the problem presently at hand, one should start with the jet affine connection 1-forms in the $(1,1)$ sector, then go to the $(1,2)$ sector followed by the $(2,2)$, $(1,3)$, $(2,3)$ and $(3,3)$ sectors. Why does this approach make sense? For the $r$-th order Lorentz group decomposes into a succession of subgroups as the order increases. That is, one can begin with purely linear transformations, then consider purely quadratic transformations, then purely cubic etc. Roughly speaking, one wishes to fix as much of the superfluous structure as possible at a given order and to retain the choices made at each successive stage. As stated, the principle of ascent may seem to be somewhat vague for now, but we intend to exemplify it in practice in the immediately following subsection. Perhaps, one hopes, at a later stage of development of the theory it will be possible to abstract a systematic procedure.

There remains one further consideration to introduce here, which will equip us with a physical principle by means of which to carry out the said fixing of the superfluous structure at each given stage. In the absence of a regular theory of matter, the question as to which gauge transformations (by which we mean a continuous change of the Cartan frame) to retain in the effective theory depends ultimately on an hypothesis as to how matter may be expected to behave under such transformation. If one reckons it reasonable to suppose that the two configurations of matter related by the proposed gauge transformation ought to be treated as physically equivalent, then, in the corresponding sector of the gauge potentials, the two gauge fields are to be identified as belonging to the same gauge equivalence class. Thus, based on one's idea as to what configurations of matter should be physically distinguishable or not, as the case may be, one can implement a rule describing how to cut down the gauge field degrees of freedom to a complete set of representatives of the gauge equivalence classes thereby decided upon.

Thus, for instance, in the special theory of relativity configurations of matter that pass into one another under the operation of an element of the ordinary Lorentz group are to be identified as physically equivalent. Now, in carrying out the ascent, we propose to do the same thing with respect to Lorentz transformations of successively higher order---just that the procedure will necessarily be more difficult to visualize since it must involve infinitesimals of higher order and moreover will be unfamiliar, it never having been done before in the literature. Let us, nonetheless, specify what physical intuition tells us about how to imagine the procedure, based upon the state of post-Einsteinian theory as it has been advanced in Part II and {\S}III.1 and thus far in {\S}III.2.

The standpoint on the nature of matter adopted above in Part II is that it is to be thought of as consisting in a flow of deformations to the generalized curvature 2-forms, for, in the field equation of the general theory of relativity, the stress-energy term on the right hand side acts dynamically to reshape the curvature of space, as reflected by the Einstein tensor on the left hand side. The cosmological term figures in this context as a constant residual background of stationary dark energy. The lesson is that we are to picture matter itself as a kind of flow of dark energy from one place to another. At this point, we can bring into the scenario a methodological maxim, viz., Einstein's well-known Machian instrumentalism which plays a key role in his establishment of an epistemological foundation to the general theory in his 1916 paper. Hence, we have to visualize for ourselves what an observer could see, resp. could not see, when presented with the spectacle of a flow of dark energy, as indicated. To this end, we shall invoke what may be termed a microscopic elevator thought experiment. Suppose the observer, who exists on a scale somewhere between that of the gauge fields (i.e., subnunclear) and the Planck regime, to be enclosed inside a box so that the only spatio-temporal coincidences he is capable of noting would those that take place in his immediate vicinity. What could such an observer see?

As an analogy, imagine a tiny observer suspended in a flow of liquid water. Under such conditions, the local observables would be such things as the relative differentials in the hydrodynamical velocity field \textit{at the position of} the observer; hence, shear, vorticity etc. The said observer would \textit{not} be capable of recording the orientation of any co-moving minute bodies that might also be suspended in the liquid relative to a distant unmoving frame, such as those of the laboratory or of the fixed stars. Therefore, we wish to apply a similar consideration to our microscopic observer inside a box, and propose the following principle as physical:

\begin{postulate}[Principle of microscopic indiscernability]
	When determining the effective gauge Lie algebra in the post-Einsteinian general theory, one is free to remove as equivalent any two configurations of the gauge potentials (that is, the jet affine connection 1-forms) between which there are not expected to be any locally distinguishable configurations of matter.
\end{postulate}

\begin{remark}
	The reason we have to be so circumlocutory is that, in our theory, we do not have at our disposal any explicit representations of the matter sector (such as were available when the Glashow-Weinberg-Salam electroweak unification was worked out). Thus, for us, the matter sector has to be pictured as mentioned in terms of a flow of curvature deformations in the analytical form expressed by equation (\ref{gauge_matter_coupling}) for the \textit{phenomenological} contribution of matter to the action. Let us point out, however, that the stipulation of microscopic indiscernability does have content. For, one could in principle employ material test particles to survey the disposition of the generalized field strength tensor, and this device should be be no less practicable at the microscopic level. Thus, if two gauge potentials give rise to a field strength tensor who effect on the material test particles is locally indiscernable to the microscopic observer, these are to be cast as gauge equivalent according to our postulate.
\end{remark}

\subsubsection{Application to Chromodynamical Symmetry Breaking}

To recur to Einstein's 1905 thought experiment, the novelty he introduces consists precisely in the contention that a moving magnet described as accelerating the electrical charges in a nearby stationary conducting coil by means of the electrical field produced by the time-varying magnetic field \textit{cannot be distinguished from} the alternate description of the situation in terms of a stationary magnet inducing an electromotive force in the moving conducting coil through Faraday's law; only the \textit{relative velocity} between the magnetized body and the conducting coil is physically meaningful. In the same way, the postulates proposed in the previous section imply that many (though not all) of what might prima facie seem to be differing configurations among the affine jet connection 1-forms are, in fact, indistinguishable as far as the microscopic observer is concerned. Thus, to third order in the jets, what manifests itself in physics at terrestrial scales is only the effective gauge Lie algebra $\u(1)_Y \oplus \su(2) \oplus \su(3)$.

First of all, we have to ask ourselves a question concerning a matter of principle, namely what does it mean to measure a jet? A 1-jet is nothing but an idealization of a spatio-temporal displacement in the limit as it tends to zero, compared to anything else. A 2-jet would be a displacement of a displacement, and so forth. Thus, to measure a jet always means to look at a phenomenon occurring at scales small in comparison with the laboratory. The apparatus, mounted say on an optical table, should permit finely calibrated observations of anything within its ambit. Needless say, besides training a camera on a focal point of interest or using a stopwatch to time successive events of interest, one could also detect a jet indirectly via its influence on something else more readily measureable---as is usual in physics; for instance, consider how a mass spectrometer determines the mass of an ionic molecular species versus a weighing it on a scale. But, nothing about performing measurements on jets necessitates knowledge of the outside world. So measurements of jets constitute the kind of thing that fits admirably within the scope of the principle of microscopic indiscernability.  

Now it is clear that the principle of local Lorentz invariance coming from the special theory of relativity applies with that much the more force to a microscopic observer. Therefore, he will find it impossible to distinguish between jets that pass into one another under change of the laboratory's inertial frame. This circumstance holds immediate relevance as soon as we begin to consider 2-jets. For not only must we expect that two 2-jets related to each other via an ordinary Lorentz transform are to be treated as physically equivalent, we ought further to suppose that any generalized tensor built from such jets cannot contain an element of physical reality not belonging to the jets themselves; otherwise the element of physical reality would have to manifest itself in such a way as to render a distinction between jets we have just entertained as indistinguishable, a contradiction. Note that this observation implies a stronger statement than that of Lorentz covariance alone. For it maintains that the physically inequivalent tensors are not to be constructed from tensor products of the 2-jets, denoted as $J^2 \otimes J^2$ and the like, subsequently to be subjected to Lorentz covariance (hence, $(J^2 \otimes J^2)/SO(1,3)$ etc.), but that they must be constructed from the space of 2-jets modulo equivalence under Lorentz transform, that is, from $J^2/SO(1,3) \otimes J^2/SO(1,3)$. Therefore, the gauge Lie algebra must initially break from $\so(10)$ to $\so(4) = \so(3) \oplus \so(3)$. The further breaking from $\so(3) \oplus \so(3)$ to $\u(1)_Y \oplus \so(3) = \u(1)_Y \oplus \su(2)$ is described above in {\S}\ref{spont_symm_breaking}.

At the next level, any totally isotropic subspace in the $(3,3)$ sector is as good as any other: why? All a microlocal observer can tell is what the angle subtended by any two isotropic vectors may be. Thus, the diagonal elements $g \times g \in \so(7) \subset \so(7) \oplus \so(7) \subset \so(7) \oplus \so(13)$ represent true gauge degrees of freedom and we may select a given totally isotropic subspace as a representative of the gauge equivalence class. Note, the scenario is analogous to Einstein's original thought experiment in the special theory of relativity: configurations of matter connected by other elements ought to be treated as physically identical (if they were not, it would be just as strange as an apparent violation of ordinary Lorentz invariance). 

In order to understand what effective gauge degree of freedom exist inside $\so(7)$, let us examine an analogy to a symmetric top: if, according to microscopic indiscernability, one were barred from recording the orientation of the top with respect to a larger absolute reference frame, he would gauge away rotations about the long axis leaving just two Euler angles because the third Euler angle, registering differing orientations of the top about its long axis, is distinguishable only by reference to the outside world (as discerned by means of incoming light rays from the stars, for instance). How to apply this idea in the present context of 3-jets? Here, attach a label to any two 3-jets in $\so(7)$ corresponding to their cross product. Hence, $\so(7)/g_2$ just shifts among identifiable configurations with respect to the cross product. We may therefore pick the representative in each class where the cross product preserved. In other words, to each gauge equivalence class (element of $\so(7)/g_2$) we select a canonical representative in a consistent way, thus reducing the effective Lie algebra from $\so(7)$ to $g_2$.

Imagine now how $g_2$ acts on $\imo$ by selecting a basis of the latter and subjecting the basis elements to the motion generated by a given element of $g_2$. If we single out one such basis vector, its stabilizer group in $g_2$ will be a copy of $\su(3)$. Thus, as a collective the basis vectors in $\imo$ move in unison under $g_2/\su(3)$ only. But this implies that as a local observer who cannot compare with distant sources we could not distinguish differing cosets in $g_2/\su(3)$, for only the relative spatial orientations of the basis elements among themselves matter. Hence, the cosets in $g_2/\su(3)$ must, according to the principle of microscopic indiscernability, be regarded as gauge equivalent classes. Therefore, modulo $g_2/\su(3)$ we are left with $\su(3)$ alone as our surviving candidates for the effective Lie algebraic gauge degrees of freedom in the $(3,3)$ sector.

But under the action of elements within $\su(3)$ the relative disposition of the seven generators of $\imo$ \textit{does} vary. Hence, a local microscopic observer could detect it, in principle with an experiment sensitive to the angles subtended by any given pair of generators. In any event, as a matter of experience, we have excellent reason to suppose quarks transforming under $\su(3)$. In other words, the statement of experience that it is not possible to gauge away $\su(3)$ is consistent with the theoretical expectation. But we still face the issue of which copy of $\su(3)$ to adopt as representative among those related by conjugacy. The natural choice would be to fix $Z \in V^{(2,3)}$ itself: as it leads to simplified form of the theory, else one would in effect always be passing to this and back (for the same reason as, in celestial mechanics, it is convenient when analyzing the restricted three-body problem to go to the co-rotating frame in which Jacobi's constant takes on a certain value; cf. Thirring, vol. 1, {\S}4,4 \cite{thirring_vol_1}). 

\subsubsection{What may be the Role of the Off-diagonal Sectors?}\label{off_diagonal_sectors}

Now we have reduced $A^{(2,2)}$ to $\u(1)_Y \oplus \su(2)$ and $A^{(3,3)}$ to $\su(3)$. What about the off-diagonal sectors $A^{(1,2)}$, $A^{(1,3)}$ and $A^{(2,3)}$? As seen in the previous subsections, these do not survive to be part of the effective gauge Lie algebra since the putative gauge transformations they generate would fail to be symmetries of the Proca term in the Proca-Yang-Mills equation. Nevertheless, the corresponding vector fields remain part of the problem. The $A^{(1,2)}$ potentials cannot be excluded on grounds of their masses, since according to {\S}\ref{chromodynamic_scale} they must be nearly massless. The $A^{(1,3)}$ and $A^{(2,3)}$ potentials, on the other hand, will receive appreciable masses via the spontaneous symmetry breaking mechanism. In fact, the typical masses expected in the $(2,3)$ sector are so great that the corresponding potentials can be entirely eliminated except for a totally isotropic subspace. In the $(1,3)$ sector, however, the masses must be comparable to those of the electroweak gauge bosons, or $M_W$ resp. $M_Z$. Therefore, we predict a novel set of intermediate vector fields and forces corresponding to these off-diagonal potentials left over after the spontaneous symmetry breaking, some effectively massless and others having masses on the electroweak scale, whose phenomenology is completely unknown.

\subsection{Summary}

The conclusion we have reached in {\S}\ref{identification_of_chromodynamics} is that the basic principles of the post-Einsteinian general theory of relativity as outlined above in {\S}\ref{dynam_process} do indeed lead, at third order in the jets, to a strong nuclear force possessing the $SU(3)$ symmetry of classical chromodynamics. For the time being, we cannot address asymptotic freedom and the origin of quark confinement in quantum field theory without going well beyond the present state of the art.

It is to be stressed that no ad hoc arrangements are required to bring about this result. Rather, all one needs is the premise that spontaneous symmetry breaking at third order in the jets should behave in a similar fashion to what has already been worked out at second order in {\S}\ref{chapter_8}. The result follows from the already postulated structure of the pseudo-Riemannian signature of the generalized Minkowskian metric in flat space-time to third order in the jets. Hence, the fact that,  without any further modifications, it yields just the non-abelian $\mathrm{su}(3)$ Lie algebra corresponding to the Yang-Mills gauge fields for the strong nuclear force of the standard model constitutes a striking confirmation of the correctness of our entire view of fundamental forces. 

The great unresolved puzzle, needless to say, remains to characterize the nature of matter and to derive the pattern of fermionic fields appearing in the standard model of elementary particle physics. The infinitesimal geometry of higher order promises to underwrite the development of a fundamental theory of matter as well as of force. The following reflection seems to be pertinent. Given the admirable empirical successes of Einstein's general theory of relativity, when we propose to interpret matter as a dynamical flow of curvature perturbations (or perhaps one should say, the hydrodynamics of dark energy), we do not go beyond what we already have good reason to suppose exists in nature. Any speculative mechanistic hypothesis such as that of the superstring would seem to be superfluous.

	\section{Discussion}
	
The novel role played by infinitesimals in the foundations of differential geometry proposed here has already borne surprising fruit, for not only do the familiar concepts extend more or less immediately to the higher-order setting while acquiring a radically changed significance but also the consistency of the formalism has been underscored by its confirmed predictions of novel effects in orbital mechanics and by the ease with which unification of gravitation with the electroweak force proceeds. Yet the research program remains still very much in its infancy. We wish to close with a few suggestions for next steps.

There is evidently a need for an improved understanding of the implications of general covariance for the interrelations among fundamental forces, now to be viewed as united. So far, we have not considered arbitrary coordinates but worked in a Cartesian frame and with changes of inertial frame implemented by Lorentz transformation only. Just as in Maxwellian electrodynamics, a Lorentz transform can convert an electric field into a magnetic field or vice versa, one expects that suitable non-linear changes of coordinate could mix the electroweak gauge fields with the gravitational field. An appreciation of the physical significance of effects such as this is so far lacking. 

Likewise, a fuller exploration of the nature of higher inertia is called for; the idea of higher momentum as an intertwinor has been investigated only in a Cartesian frame, employing a non-canonical splitting of a 2-vector into 1-vector and 2-vector parts. How is the intertwining property rightly to be formulated in a coordinate-invariant manner? Moreover, we have stayed for the most part in the weak-field regime. What happens when one crosses into the strong-field regime, i.e., intermediate between the subnuclear scale of Yang-Mills gauge theory (on the order of $10^{-13} ~\mathrm{cm})$ and Planck scale (on the order of $10^{-33} ~\mathrm{cm})$? Here, the $\Gamma\Gamma$ cross-terms become material, but the physical implications of their contribution remain unclear, whether for astrophysics or for elementary particle physics. 

The actional perspective which has loomed large in the development of the theoretical approach of the present work is entirely satisfactory in the classical regime but stands in need of completion, if it is to apply outside this domain. One must expect quantum mechanics to become relevant when dealing with field configurations whose actions differ by an amount on the order of Planck's constant. In a unified treatment of the fundamental forces such as the dynamical philosophy naturally invites us to entertain, quantization at the field-theoretical level could succeed only as a theory of quantum gravity, for, in a unified picture, the electroweak and strong force as well cannot be neatly decoupled from the gravitational field.

What the present theory has to contribute is a fresh perspective. The problem of quantum gravity seems to have been ill-posed in the absence of higher infinitesimals, for post-Einsteinian theory suggests that non-abelian Yang-Mills gauge theory is only an approximation and it may well be impossible to prove existence of interacting quantum fields without leaving behind the restriction to this limiting situation. In particular, one could expect that a constructive theory at the quantum-field-theoretic level must include everything and, possibly (one hopes), that the presence of infinitesimal degrees of freedom might soften the severe singularities in operator-valued distributions that currently obstruct progress. If so, this would explain why efforts to construct non-abelian Yang-Mills gauge fields in four dimensions by themselves (the weightiest among them being Balaban's) have foundered.

The present work represents progress halfway towards Einstein's goal of a unified physics, in that the suggested actional perspective centered on the generalized Riemannian curvature specifies in principle how forces must enter into the theory, but we have as yet no corresponding theoretical framework by which to determine how matter itself ought to play a role in the formalism. For the time being, matter has been represented phenomenologically through the stress-energy tensor on the right-hand side of the field equations, just as in Einstein's and Maxwell's theories, but the guise under which it thus appears has not been derived from any fundamental geometrical idea.

\end{document}